\newtheorem{thm}{Theorem}[section] 
\newtheorem*{thm*}{Theorem} 
\newtheorem{cor}[thm]{Corollary}
\newtheorem{defn}[thm]{Definition}
\newtheorem{exmpl}[thm]{Example}
\newtheorem{lem}[thm]{Lemma}
\newtheorem{prop}[thm]{Proposition}
\newtheorem{rem}[thm]{Remark}
\newcommand\Lection[6][]{{\newpage 
\section[#2]{\underline{{{\sc{#2}}\if!#3!{}\else{ (#3)}\fi}}}
\if!#1!{}\else{\tiny{\begin{center}(#1)\end{center}}}\fi\flushright{{\tiny{#5}}} 
\begin{center}{{\bf{#4}}}{\ } \\{#6}\end{center}\vskip1cm}}
\def\ie{{i.e.}} 
\def\eg{{e.g.}}
\def\lam{{\lambda}}
\def\s{\sigma}
\def\C {{\mathbb {C}}}
\def\F {{\mathbb {F}}}
\def\HQ{{\mathbb {H}}} 
\def\R {{\mathbb {R}}}
\def\Z {{\mathbb {Z}}}
\def\AA{{\mathcal A}}
\def\CD{{\mathcal D}}
\def\E{{\mathcal E}}
\def\O{{\mathcal{O}}}
\DeclareMathAlphabet{\mathscr}{OT1}{pzc}{m}{it}
\def\ra{{\rightarrow}}
\def\sub{\subseteq}
\def\({\left(}
\def\){\right)}
\def\isom{{\;\cong\;}} 
\def\normal{{\unlhd}}
\def\semidirect{{\ltimes}} 
\def\normali{{\lhd}} 
\def\co{{\,{:}\,}}
\newcommand\suchthat{{\,:\ \,}}
\newcommand\comp[1]{{#1^{\operatorname{c}}}} 
\newcommand{\oper}[1]{{\operatorname{#1}}}
\newcommand\operA[2]{{\if!#2!\operatorname{#1}\else{\operatorname{#1}_{#2}^{\phantom{I}}}\fi}} 
\newcommand{\Tr}[1][]{\if!#1!\operatorname{Tr}\else{\operatorname{Tr}_{#1}^{\phantom{I}}}\fi} 
\newcommand{\sgn}[2][!]{{\operatorname{sgn}\if!#1(#2)\else\relax\fi}}
\DeclareMathOperator{\Aut}{Aut} %
\newcommand{\Zent}{{\oper{Z}}}
\newcommand\algint[2][]{\if!#1\relax O_{#2}\else{O_{#2}^{\phantom{I}}}\fi}
\renewcommand\H[4][!]{{\operatorname{H}^{#2}\!\!\;({#3},{#4}{\if!#1\relax\else(#1)\fi})}}
\newcommand\cond[2][!]{{\operatorname{cond}_{\if!#1\relax\else{\comp{#1}}\fi}(#2)}}
\renewcommand\L[2]{{\operatorname{L}^{#1}(#2)}}
\newcommand\GL[1][d]{{\operatorname{GL}_{#1}}} 
\newcommand{\set}[1]{{\left\{#1\right\}}}
\newcommand\ideal[1]{{\left<{#1}\right>}}
\newcommand\sg[1]{{\ideal{#1}}}
\newcommand\mul[1]{{#1^{\times}}} 
\newcommand\Pic[1]{{\operatorname{Pic}}} 
\newcommand\Div[1]{{\operatorname{Div}}} 
\newcommand\Cref[1]{{Corollary~\ref{#1}}}
\newcommand\Eref[1]{{Example~\ref{#1}}}
\newcommand\Lref[1]{{Lemma~\ref{#1}}}
\newcommand\Pref[1]{{Proposition~\ref{#1}}}
\newcommand\Rref[1]{{Remark~\ref{#1}}}
\newcommand\Tref[1]{{Theorem~\ref{#1}}}
\newcommand\Fref[1]{{Figure~\ref{#1}}}
\newcommand\Sref[1]{{Section~\ref{#1}}}
\newcommand\Ssref[1]{{Subsection~\ref{#1}}}
\newcommand\eq[1]{{(\ref{#1})}}
\newcommand\eqs[3][--]{{\eq{#2}{#1}\eq{#3}}}
\newcommand\Eq[1]{{Equation \eq{#1}}}
\long\def\half#1\halved{{\footnotesize{#1}}}
\newcommand\thisfile{{\jobname.tex}}
\newcommand\Exref[1]{{Example~\ref{#1}}}
\numberwithin{equation}{section}
\newcommand\ARI[1]{{\left\{\begin{array}{l}\mbox{#1} \end{array}\right\}}}
\newcommand\ARII[2]{{\left\{\begin{array}{ll}\mbox{#1} \\ \mbox{#2}\end{array}\right\}}}
\newcommand\ARIII[3]{{\left\{\begin{array}{lll}\mbox{#1} \\ \mbox{#2} \\ \mbox{#3}\end{array}\right\}}}
\newcommand\ARIV[4]{{\left\{\begin{array}{llll}\mbox{#1} \\ \mbox{#2} \\ \mbox{#3} \\ \mbox{#4} \end{array}\right\}}}
\newcommand\var[2][]{{\mathcal{#2}^{\operatorname{#1}}}}
\def\V{{\var{V}}}
\def\W{{\var{W}}}
\def\MV{{\var{M}}}
\def\ALT{{\var[alt]{A}}}   
\def\FLEX{{\var[flex]{F}}} 
\def\ASS{{\var{A}}}        
\def\INV{{\var[inv]{I}}}      
\def\COMM{{\var{C}}}      
\def\IDEN{{\var{I}^*}}      
\def\DIAS{{\var{D}}}      
\def\Central{{\var{S}^*}}   
\def\superCentral{{\var{S}^*_0}}   
\def\Normal{{\var{N}^*}}   
\def\Exptwo{{\var{E}_2}} 
\def\bs{\backslash}
\newcommand\KZent{{\operatorname{K}}}
\def\perferrow{{\ar@(dl,dr)@{->}|{\circ}}} 
\def\id{{\operatorname{id}}}
\newif\ifXY 
\newcommand\Nuc[2][]{{\mathfrak{N}_{#1}(#2)}}
\begin{document}

\title[Loops with involution and Cayley-Dickson doubling]{Loops with involution and the Cayley-Dickson doubling process}

\author{Adam Chapman}
\address{School of Computer Science, Academic College of Tel-Aviv-Yaffo, Rabenu Yeruham St., 
Israel}
\email{adam1chapman@yahoo.com}

\author{Ilan Levin}
\author{Uzi Vishne}
\address{Department of Mathematics, Bar-Ilan University, Ramat Gan, Israel
}
\email{ilan7362@gmail.com}
\email{vishne@math.biu.ac.il}

\author{Marco Zaninelli}
\address{
Department of Mathematics, University of Pennsylvania, 
Philadelphia, 
USA}
\email{zaninelli.marco21@gmail.com}

\renewcommand{\subjclassname}{%
      \textup{2010} Mathematics Subject Classification}
\subjclass[2020]{} 
\subjclass{Primary 20N05; Secondary 17A36, 20F28}

\keywords{Loop with involution, central involution, normal involution, Cayley-Dickson construction, automorphism group.}

\date{\today}


\begin{abstract}
We develop a theory of loops with involution. On this basis we define a  Cayley-Dickson doubling on loops, and use it to investigate the lattice of varieties of loops with involution, focusing on properties that remain valid in the Cayley-Dickson double. Specializing to central-by-abelian loops with elementary abelian $2$-group quotients, we find conditions under which one can characterize the automorphism groups of iterated Cayley-Dickson doubles. A key result is a corrected proof that for $n>3$, the automorphism group of the Cayley-Dickson loop $Q_n$ is $\GL[3](\F_2) \times \set{\pm 1}^{n-3}$.
\end{abstract}

\thanks{Levin and Vishne are partially supported by an Israeli Science Foundation grant \#1994/20.}

\maketitle

\renewcommand\u[1]{c_{#1}}
\renewcommand\O{\mathbb{O}}
\newcommand\w[1]{\theta_{#1}}
\renewcommand\AA{{\oper{A}\!\!\!\!\Aut(A)}}
\newcommand\IFF{{\Leftrightarrow}}
\newcommand\Nred{{\oper{N}_{\oper{red}}}}

\section{Introduction}

Noncommutative algebra and nonassociative algebra were born in~1843, two months apart, when Hamilton and Graves discovered the Quaternions and the Octonions.
As noticed by Albert \cite{Albert1942}, the procedure invented by Hamilton, that doubles the dimension of a given algebra, can be applied repeatedly, resulting in a sequence of simple nonassociative algebras of dimensions $2^n$. 
From a modern perspective, Hamilton's quaternion algebra belongs to the large family of finite dimensional central simple algebras, whose automorphisms are all inner. On the other hand, the octonions, made public by Cayley, are known to a general mathematical audience mostly for their automorphism group, associated with the simple Lie algebra of type~$G_2$.
Moving further, the automorphism groups of the higher Cayley-Dickson algebras $F = A_0 \subset A_1 \subset \cdots$ were computed by Schafer \cite{Sch0}, also see~\cite{ES}. Namely, for $n > 3$, $\Aut(A_n)$ is a direct product of $\Aut(A_{n-1})$ and an abelian group. This is an intriguing phenomenon: what makes~$A_3$ characteristic in~$A_4$, when~$A_2$ is not characteristic in~$A_3$? (A subalgebra is characteristic if it is stable under any automorphism).

\bigskip

In this paper we explore the loop counterpart of this problem. Let $Q_n$ be the loop composed of the standard basis elements of the classical Cayley-Dickson algebra $A_n$ and their minuses. One can verify by direct computations that $\Aut(Q_3) = \GL[3](\F_2)$. In \cite{Kirsh} it is claimed that $\Aut(Q_n) = \Aut(Q_{n-1}) \times \set{\pm 1}$ for any $n > 3$. This is true, as proven in this paper, but
the proof in \cite{Kirsh} involves false claims (see \Rref{Kirsh}) that we could not work around.

One goal of this paper, therefore, is to provide a correct proof for the claim that $\Aut(Q_n) = \GL[3](\F_2) \times \set{\pm 1}^{n-3}$. But we aim for a more general understanding. The Cayley-Dickson doubling is a celebrated construction of loops, known for producing the smallest Moufang loop \cite{Chein}; also see \cite[Example~II.3.10]{Book8:II}. It is an interesting problem, therefore, to find a reasonably wide setup in which we can compute the automorphism group of an iterated Cayley-Dickson doubling.

\bigskip

The Cayley-Dickson doubling is not a construction of loops, per se, but a construction of loops with involution. In the first part of this paper we therefore develop a basic theory of loops with involution. This is done, in particular, in the context of loops which are central-by-abelian, a class of loops most suitable for commutator and associator calculus. Special involutions, which we term ``central'' and ``normal'', extend the identity involution (which is an involution when $L$ is commutative) and the inverse map (when defined). From another perspective, involutions may be considered a tool in studying loops for their own sake; a loop might have involutions even when the inverse is not well-defined. 
%

\medskip

The second part of the paper initiates a systematic study of the Cayley-Dickson doubling for an arbitrary loop with involution. The process is defined in \Sref{sec:CD}. Commutators and associators in the Cayley-Dickson double are examined in \Sref{sec:cent}, where we also compute the center. In \Sref{sec:cba} we ascertain the conditions on a loop with involution $(L,*)$ under which the Cayley-Dickson double $\CD(L,*)$ is central-by-abelian.

In \Sref{PII} we use the doubling process to investigate the lattice of varieties of loops with involutions (these were not studied before; see \cite{Vars1} for a review of the lattice of loop varieties). In particular, we define the ``derivative'' $\V'$ of a variety $\V$ as the variety of loops whose double is in $\V$; this is a subvariety, and when $\V$ is finitely based, the derivation process always terminates. We then find several examples for ``perfect'' varieties, namely varieties closed under taking the Cayley-Dickson double. The technical notion of homogeneous varieties is defined in \Sref{badsec} as means to get rid of the dependence on parameters of the doubling. The basic examples are given in \Sref{sec:double}.

In \Sref{PIII} we find the conditions for a loop with involution $(L,*)$ under which the Cayley-Dickson double $\CD(L,*)$ is Moufang, extending Chein's approach in \cite{Chein}. For example, when $L$ is a group, $\CD(L,*)$ is Moufang if and only if $*$ is normal (\Cref{fin1}). We also prove that for the $3$-step doubling $\CD^3(L,*;\gamma_1,\gamma_2,\gamma_3)$ to be Moufang, it is necessary and sufficient that $L$ is a commutative Moufang loop and that the involution on~$L$ is the identity (\Cref{triple}).

\medskip

The final part specializes to loops which are central-by-abelian, where we furthermore assume that the quotient $L/\Zent(L)$ is an elementary abelian $2$-group. After observing in \Sref{sec:8} that central and normal involutions coincide in this class, we show in \Sref{sec:dias} that diassociativity, which is not finitely based in general, is indeed finitely based when $L/\Zent(L)$ is as an elementary abelian $2$-group. This leads to a perfect variety of diassociative loops, which are in general not Moufang. Some examples which are Moufang, extending the classical octonion loops, are studied in \Sref{sec:oct}.

In \Sref{sec:stab} we find conditions under which the doubling generator $j$ is fixed under all automorphisms. This finally leads in \Sref{sec:12} to conditions on a loop $T$ under which, when $T \subset L \subset M$ are doubles, $L$ is a characteristic subloop of $M$; hence $\Aut(M)$ is a group extension of $\Aut(L)$ by an abelian group. For our proof to hold, $T$ needs to be a diassociative central extension of an elementary abelian $2$-group, with a central involution, such that elements which are independent modulo the symmetric center do not commute (see \Tref{mainM}). We conclude with a characterization of the automorphism group of an iterated double, see \Tref{FINAL}, from which we obtain that $\Aut(Q_n) = \Aut(Q_3,*) \times \set{\pm 1}^{n-3}$.


\section{Preliminaries}\label{sec:prel}

We collect some basic notions on loops with involution, with an eye towards the main goal which is to define and study loops obtained by iterative doubling. Standard references on loops are \cite{Bruck} and \cite{Book7}.

\subsection{Loops}

A set with a binary operation is a {\bf{loop}} if there is an identity element, and the operation by any element from right or left is invertible. We denote the identity element by $1$. The left and right inverses of $a \in L$ are denoted by $a^{\lam}$ and $a^{\rho}$, respectively, so that $a^{\lam}a = aa^{\rho} = 1$. When the one-sided inverses of $a \in L$ coincide we say that $a$ is invertible and denote the inverse by $a^{-1}$. When $a^{-1}$ is always defined, the loop has a {\bf{well-defined inverse}}.
Following tradition, we sometimes write $ab \cdot c$ for $(ab)c$.

The {\bf{nucleus}} $\Nuc{L}$ of a loop $L$ is the set of elements $a$ satisfying $(ax)y = a(xy)$, $(xa)y = x(ay)$ and $(xy)a = x(ya)$ for any $x,y \in L$. The left-, middle- and right-nucleus are defined by the respective properties, and denoted $\Nuc[\ell]{L}$, $\Nuc[m]{L}$ and $\Nuc[r]{L}$. The nucleus, which is the intersection of the one-sided nuclei, is a subgroup of $L$. If $a \in \Nuc{L}$ then $a$ is invertible and $a^{-1} \in \Nuc{L}$.
The {\bf{commutative center}} $\KZent(L)$ (or {\bf{commutant}}) is the set of elements commuting with all elements of $L$, and the {\bf{center}} is $\Zent(L) = \KZent(L) \cap \Nuc{L}$. The center is an abelian subgroup, but the commutant is not a subloop in general.

\begin{prop}\label{anyloop}
In any loop $L$, the intersection of $\KZent(L)$ with any two of the one-sided nuclei is the center.
\end{prop}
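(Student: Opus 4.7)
The plan is to check directly, by case analysis, that for any element $a\in\KZent(L)$ which belongs to two of the three one-sided nuclei, the third nucleus condition is automatic. Since an element of $\KZent(L)$ can be freely moved past its neighbor in any product, each one-sided associativity law becomes interchangeable with the others, up to swapping $a$ with adjacent factors.

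Concretely, fix $a\in\KZent(L)$ and arbitrary $x,y\in L$. I would handle the three cases:

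\textbf{(i)} If $a\in\Nuc[\ell]{L}\cap\Nuc[m]{L}$, I want $(xy)a=x(ya)$; compute
\[
(xy)a \;=\; a(xy) \;=\; (ax)y \;=\; (xa)y \;=\; x(ay) \;=\; x(ya),
\]
using commutation, then left-nucleus, commutation, middle-nucleus, commutation.

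\textbf{(ii)} If $a\in\Nuc[\ell]{L}\cap\Nuc[r]{L}$, I want $(xa)y=x(ay)$; similarly
\[
(xa)y \;=\; (ax)y \;=\; a(xy) \;=\; (xy)a \;=\; x(ya) \;=\; x(ay).
\]

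\textbf{(iii)} If $a\in\Nuc[m]{L}\cap\Nuc[r]{L}$, I want $(ax)y=a(xy)$; compute
\[
(ax)y \;=\; (xa)y \;=\; x(ay) \;=\; x(ya) \;=\; (xy)a \;=\; a(xy).
\]

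In each case the conclusion places $a$ in the remaining one-sided nucleus, hence in $\Nuc{L}$, so $a\in\Zent(L)=\KZent(L)\cap\Nuc{L}$. The reverse inclusion $\Zent(L)\subseteq \KZent(L)\cap\Nuc[\ell]{L}\cap\Nuc[m]{L}\cap\Nuc[r]{L}$ is immediate from the definition.

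There is no real obstacle here; the only thing to be careful about is bookkeeping, making sure that at each step the factor being moved is indeed adjacent to $a$ so that the commutativity property of $\KZent(L)$ applies. The proof uses nothing beyond the definitions stated just above.
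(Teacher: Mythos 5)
Your proof is correct and is essentially the paper's argument in unrolled form: the paper observes that, for $c\in\KZent(L)$, each one-sided nucleus condition equates two of the three expressions $(cx)y$, $x(cy)$, $c(xy)$, so any two conditions force all three equal; your three case chains carry out exactly this normalization step by step. No gaps.
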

\begin{proof}
For $c \in \KZent(L)$, consider the three elements $(cx)y=(xc)y$, $x(cy)=x(yc)$, and $c(xy)=(xy)c$. Then $c \in \Nuc[\ell]{L}$ when the first and third are equal; $c \in \Nuc[m]{L}$ when the first and second are equal; and $c \in \Nuc[r]{L}$ when the second and third are equal.
\end{proof}


\subsection{Commutators and associators}

The commutators and associators in a loop are the elements $[a,b]$ and $[a,b,c]$ defined by
\begin{equation}\label{commdef}
ab = ba \cdot [a,b] 
\end{equation}
and
\begin{equation}\label{assocdef}
(ab)c = a(bc) \cdot [a,b,c],  
\end{equation}
see \cite[p.~13]{Bruck}. 


Let $[L,L]$ denote the subloop of $L$ generated by all commutators, and $[L,L,L]$ the subloop generated by all associators.
\begin{prop}\label{basicomm1}
If $[a,b] \in \Nuc{L}$, then
\begin{equation}\label{basicomm1-eq}
[b,a] = [a,b]^{-1}.
\end{equation}
\end{prop}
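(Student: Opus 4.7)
The plan is to combine the two defining equations
\[
ab \;=\; ba\cdot [a,b] \qquad\text{and}\qquad ba \;=\; ab\cdot [b,a]
\]
by substituting one into the other. Substituting the second into the first gives
\[
ab \;=\; \bigl(ab\cdot [b,a]\bigr)\cdot [a,b].
\]
Here I would want to reassociate so that the two commutators multiply directly. Since $[a,b]\in\Nuc{L}$, it lies in the right nucleus, so by the defining property $(xy)c=x(yc)$ for $c\in\Nuc{L}$ I obtain
\[
ab \;=\; ab\cdot\bigl([b,a]\cdot[a,b]\bigr).
\]

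Next I would use the loop property: left multiplication by $ab$ is a bijection, and it fixes the identity, so $[b,a]\cdot[a,b]=1$. Finally, since $[a,b]\in\Nuc{L}$, it is invertible with $[a,b]^{-1}\in\Nuc{L}$ (noted right before \Pref{basicomm1}), so from $[b,a]\cdot[a,b]=1$ I conclude $[b,a]=[a,b]^{-1}$.

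There is essentially no obstacle here: the only subtlety is making sure that the associativity step is actually licensed, which is precisely what the hypothesis $[a,b]\in\Nuc{L}$ provides, and that the inverse of $[a,b]$ is two-sided (again guaranteed by membership in the nucleus). No Moufang-type identity or further structural assumption is needed.
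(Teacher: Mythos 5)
Your proof is correct and follows the same route as the paper: substitute one defining equation into the other, reassociate using the nucleus hypothesis, cancel $ab$, and invert. The only difference is that you spell out the right-nucleus property and the cancellation step explicitly, which the paper leaves implicit.
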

\begin{proof}
By definition we have that $ab = ba \cdot [a,b] = (ab \cdot [b,a])[a,b]$, which by assumption is equal to $ab \cdot [b,a][a,b]$. Cancelling $ab$, we get $[b,a][a,b]=1$, and since $[a,b] \in \Nuc{L}$ this proves the claim.
\end{proof}

\begin{rem}
A loop has the {\bf{right inverse property}} if the identity $(xy)y^{\rho} = x$ holds; this implies all elements are invertible. \Eq{basicomm1-eq} then holds, because $(ab \cdot [b,a])[b,a]^{-1} = ab = ba \cdot [a,b] = (ab \cdot [b,a])[a,b]$.
\end{rem}

\subsection{Loops as central extensions}

A loop $L$ is an {\bf{extension}} of a loop~$A$ by a loop~$K$ if there is a short exact sequence $1 \ra K \ra L \ra A \ra 1$ of loop homomorphisms. The extension is {\bf{central}} if the image of~$K$ is contained in the center of~$L$. We say that~$L$ is {\bf{central-by-abelian}} if~$L$ is a central extension of some abelian {\it{group}}
. The {\bf{associator-commutator subloop}} $L'$, which is by definition the subloop generated by $[L,L]$ and $[L,L,L]$, is always normal, and $L/L'$ is an abelian group. Clearly $L$ is central-by-abelian if and only if $L' \sub \Zent(L)$, that is, if $L/\Zent(L)$ is an abelian group.
We study this class because it is particularly amenable to commutator and associator calculus.


Let $L$ be a central-by-abelian loop.
We use the notation $\xymatrix{x \ar@{->}[r]^k & x'}$ to record that $x' = kx$, when $x,x' \in L$ and $k$ is a central element. For example,
$$\xymatrix{ba \ar@{->}[r]^{[a,b]} & ab}, \qquad \xymatrix{ab \ar@{->}[r]^{[b,a]} & ba}.$$
Similarly, by \eq{assocdef}, we have
$$\xymatrix{a(bc) \ar@{->}[r]^{[a,b,c]} & (ab)c}, \qquad \xymatrix{(ab)c \ar@{->}[r]^{[a,b,c]^{-1}} & a(bc)}.$$

Elements $a,b,c \in L$ can be multiplied in $12$ ways, depicted with the basic factors in \Fref{the12}.

\begin{figure}
$$
\xymatrix@R=24pt@C=24pt{
(ab)c \ar@{<=}[r]^{[a,b,c]} \ar@{<-}[d]_{[a,b]} \ar@{..>}[ddrrr]|(0.21){[c,ab]} & a(bc) \ar@{<..}[ddd]|(0.66){[a,bc]} \ar@{->}[r]^{[c,b]} & a(cb) \ar@{<..}[ddd]|(0.66){[a,cb]} \ar@{=>}[r]^{[a,c,b]} & (ac)b \ar@{->}[d]^{[c,a]} \\
(ba)c \ar@{<=}[d]_{[b,a,c]} \ar@{..>}[ddrrr]|(0.75){[c,ba]} & {} & {} & (ca)b \ar@{<=}[d]^{[c,a,b]} \\
b(ac) \ar@{<..}[rrruu]|(0.75){[b,ac]} \ar@{<-}[d]_{[a,c]} & {} & {} & c(ab) \ar@{->}[d]^{[b,a]} \\
b(ca) \ar@{=>}[r]_{[b,c,a]} \ar@{<..}[rrruu]|(0.2){[b,ca]} & (bc)a \ar@{<-}[r]_{[b,c]} & (cb)a \ar@{<=}[r]_{[c,b,a]} & c(ba).
}
$$
\caption{Products of $a,b,c$ in a central-by-abelian loop.}\label{the12}
\end{figure}
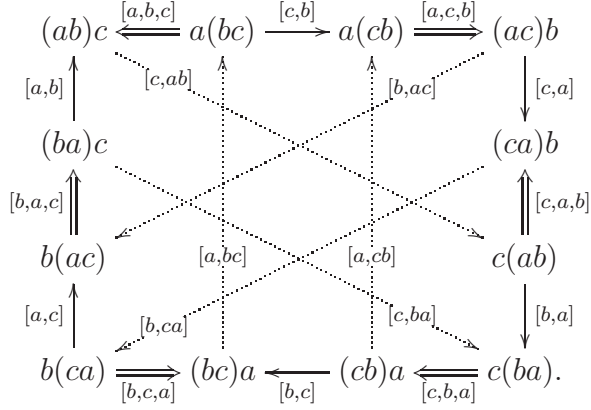

The product of labels over any closed cycle in the diagram is obviously equal to~$1$, so we can read several identities from the diagram.
\begin{cor}
The following identities hold in any central-by-abelian loop:
\begin{eqnarray}
{}[c,ba] & = & [c,ab], \label{M0} \\
{}[a,b,c][c,b,a]  & = & [a,b][a,bc]^{-1} [ab,c][b,c]^{-1}, \label{M1}\\
{}[a,b,c] [b,c,a] [c,a,b] & = & [bc,a][ab,c][ac,b], \label{M2}\\
{}[a,c,b][c,b,a][b,a,c]  & = & [bc,a][ab,c][ac,b]. \label{M3}
\end{eqnarray}
\end{cor}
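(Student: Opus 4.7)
The plan is to use the observation stated just before the corollary: since every edge label in Figure 1 lies in $\Zent(L)$ (commutators and associators are central by the central-by-abelian hypothesis), moving along a labeled edge amounts to multiplication by a central element, and therefore any closed cycle in the diagram must multiply to $1$, with edges traversed against their drawn direction contributing inverses. Proposition~\ref{anyloop} (through \eq{basicomm1-eq}) guarantees that every such label has a well-behaved inverse; by centrality all labels commute. The four identities (M0)--(M3) are each to be read off a specific cycle.

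For (M0), I would use the four-cycle $(ab)c \to c(ab) \to c(ba) \to (ba)c \to (ab)c$, whose labels in the order of traversal are $[c,ab]$, $[b,a]$, $[c,ba]^{-1}$ (against the arrow), and $[a,b]$. Setting the product to $1$ and cancelling $[a,b][b,a]$ gives (M0).

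For (M1), I would use the six-cycle $a(bc) \to (ab)c \to (ba)c \to c(ba) \to (cb)a \to (bc)a \to a(bc)$, which picks up (in order) $[a,b,c]$, $[a,b]^{-1}$, $[c,ba]$, $[c,b,a]$, $[b,c]$, $[a,bc]$. Rearranging so that the two associators sit on one side, and rewriting $[c,ba]^{-1}$ as $[ab,c]$ via (M0) together with $[ba,c]=[c,ba]^{-1}$, yields (M1). For (M2) I would take the entirely forward-directed hexagonal cycle $a(bc) \to (ab)c \to c(ab) \to (ca)b \to b(ca) \to (bc)a \to a(bc)$, whose six labels $[a,b,c]$, $[c,ab]$, $[c,a,b]$, $[b,ca]$, $[b,c,a]$, $[a,bc]$ multiply to $1$; isolating the three associators on one side and rewriting each dotted commutator as the inverse of a term $[xy,z]$ (using (M0) to convert $[b,ca]$ into $[b,ac]$) produces (M2). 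Identity (M3) is obtained symmetrically from the mirror hexagonal cycle $b(ac) \to (ba)c \to c(ba) \to (cb)a \to a(cb) \to (ac)b \to b(ac)$, whose six forward-directed labels are $[b,a,c]$, $[c,ba]$, $[c,b,a]$, $[a,cb]$, $[a,c,b]$, $[b,ac]$; the same rewrites, with (M0) applied to turn $[c,ba]$, $[a,cb]$ into $[ab,c]^{-1}$, $[bc,a]^{-1}$, give (M3).

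The only delicate aspect is the bookkeeping of arrow directions in the diagram; once the cycles are identified, each identity reduces to a routine rearrangement inside the central abelian subgroup generated by the relevant commutators and associators.
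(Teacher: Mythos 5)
Your proposal is correct and is essentially the paper's own argument: the paper proves the corollary by the single observation that the product of labels around any closed cycle in Figure~\ref{the12} is $1$, and you have simply made explicit which cycles yield \eq{M0}--\eq{M3}, which I have checked against the diagram. One tiny slip: the identity $[b,a]=[a,b]^{-1}$ that you use to cancel and to convert $[c,ab]^{-1}$ into $[ab,c]$ comes from Proposition~\ref{basicomm1} (equation \eq{basicomm1-eq}), not from Proposition~\ref{anyloop}.
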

Of these, \eq{M0} is obvious, because $[a,b]$ is central. The identities \eq{M2} and~\eq{M3} are in the same orbit under permuting the letters $a,b,c$.


\section{Involutions on loops}\label{sec:inv}

An {\bf{involution}} on $L$ is an anti-automorphism of order~$2$. We denote an involution by~$*$. For any involution, it is easy to see that $(a^*)^\lam = (a^\rho)^*$ and $(a^*)^\rho = (a^\lam)^*$ for every $a \in L$. An involution must preserve the nucleus and the center. Furthermore, the identity map is an involution if and only if $L$ is commutative.

We are not aware of a systematic treatment of loops with involution. For a survey of varieties of semigroups with involutions, see \cite{Surv}.

We focus on some special types of involutions.  We will refer below to the following small example.
\begin{exmpl}\label{-1ex}
Let $L = \set{\pm 1, \pm \s, \pm \s^2}$ be the loop of order~$6$ generated by a central element of order~$2$ denoted $-1$, and an element $\s$, satisfying
$$\s \cdot \s = \s^2, \qquad \s \cdot \s^2 = 1, \qquad \s^2 \cdot \s = -1, \qquad \s^2 \cdot \s^2 = \s.$$
Then $L$ is a central extension of the cyclic group~$\Z_3$ by $\set{\pm 1} \isom \Z_2$. The center is $\set{\pm 1}$.
This loop has an involution defined by $\s^* = \s^2$ and $(\s^2)^* = \s$, where clearly $(-1)^* = -1$.
\end{exmpl}

\subsection{Central involutions}


We say that an involution $*$ on a loop $L$ is {\bf{central}} if for every $a \in L$ there exists $\mu(a)$ in the center of~$L$, such that
\begin{equation}\label{mudef}
a^* = \mu(a)a.
\end{equation}
Equivalently,
\begin{equation*}
\mu(a) = a^*a^{\rho} = a^{\lam} a^* \in \Zent(L).
\end{equation*}
Whenever we refer to a central involution, the associated function~$\mu$ is tacitly assumed to be defined by \eq{mudef}.
If $*$ is central, then
\begin{equation}\label{aa^*}
aa^* = a^*a.
\end{equation}

\begin{rem}\label{L/Z-*1}
An involution on $L$ is central if and only if it induces the identity involution on $L/\Zent(L)$.
Therefore, a necessary condition for the existence of a central involution is that $L/\Zent(L)$ is a commutative loop, equivalently $[L,L] \sub \Zent(L)$.
\end{rem}

This condition does not suffice. There are central-by-abelian loops without central involutions:
\begin{exmpl}[A central-by-abelian loop with involution which does not have a central involution]\label{0ex}
The loop $L$ of \Exref{-1ex} has an involution, but has no central involution. For if $x\mapsto x^{\tau}$ is a central involution then $\s^\tau = \pm \s$ implies $(\s^2)^\tau = (\s^\tau)^2 = (\pm \s)^2 = \s^2$, and then $\s^{\tau} = (\s^2 \cdot \s^2)^{\tau} = ((\s^2)^{\tau})^2 = (\s^2)^2 = \s$; but now $1 = 1^\tau = (\s  \s^2)^{\tau} = (\s^2)^\tau \s^\tau = \s^2 \s = -1$, a contradiction.
\end{exmpl}

\begin{prop}
Let $*$ be a central involution on a loop $L$. Then
\begin{eqnarray}
\mu(a^*) & = & \mu(a)^{-1}.\label{theyantisymm1} \\
\mu(a)^* & = & \mu(a)^{-1}.\label{theyantisymm2}
\end{eqnarray}
for any $a \in L$.
\end{prop}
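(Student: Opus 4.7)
The plan is to derive both identities from the single equation $a^* = \mu(a)a$ by applying the involution $*$ to it and exploiting the anti-automorphism property. First I would apply $*$ to both sides, obtaining
\[
a = (a^*)^* = (\mu(a)\,a)^* = a^* \cdot \mu(a)^*,
\]
and then use that $*$ preserves the center (so $\mu(a)^* \in \Zent(L)$) to rewrite this as $a = \mu(a)^*\,a^*$.

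Next, I would substitute the defining relation $a^* = \mu(a)a$ into the right-hand side, giving
\[
a = \mu(a)^*\,\mu(a)\,a = \bigl(\mu(a)^* \mu(a)\bigr)\,a.
\]
Since the operation by $a$ on the right is invertible in the loop, cancellation yields $\mu(a)^*\mu(a) = 1$, which is precisely \eqref{theyantisymm2}.

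Finally, for \eqref{theyantisymm1}, I would compare the equation $a = \mu(a)^* a^*$ obtained above with the defining identity applied to $a^*$, namely $a = (a^*)^* = \mu(a^*)\,a^*$. Cancelling $a^*$ (again by the loop axioms) gives $\mu(a^*) = \mu(a)^*$, and combining this with \eqref{theyantisymm2} yields $\mu(a^*) = \mu(a)^{-1}$.

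I do not expect any real obstacle: the argument is a direct manipulation of the defining identity, and the only facts used are that $*$ is an order-$2$ anti-automorphism, that it preserves $\Zent(L)$, and that left/right multiplication in a loop is cancellative. The only point requiring mild care is to make sure that when we substitute or cancel, centrality of $\mu(a)$ and $\mu(a)^*$ lets us regroup products freely.
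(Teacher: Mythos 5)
Your argument is correct and is essentially the paper's own proof: both compute $a = a^{**}$ in two ways (once by applying $*$ to the identity $a^* = \mu(a)a$, once by expanding $\mu(a^*)a^*$) and cancel $a$ or $a^*$ using centrality. The only cosmetic difference is that you route \eqref{theyantisymm1} through the intermediate identity $\mu(a^*) = \mu(a)^*$ rather than reading it off directly from $\mu(a^*)\mu(a)a = a$.
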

\begin{proof}
Compute $a = a^{**} =\mu(a^*)a^* = \mu(a^*)\mu(a)a$
and $a = a^{**} = (\mu(a)a)^* = a^* \mu(a)^* = \mu(a) a \mu(a)^*$.
\end{proof}

\begin{prop}\label{muinduces}
Let $*$ be a central involution on a loop $L$. Then
\begin{equation}\label{goodact}
\mu(\alpha a) = \mu(\alpha) \mu(a),
\end{equation}
for any $\alpha \in \Zent(L)$ and $a \in L$. In particular $\mu \co L \ra \Zent(L)$ restricts to a group homomorphism $\mu \co \Zent(L) \ra \Zent(L)$.
\end{prop}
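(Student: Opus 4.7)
The approach is to compute $(\alpha a)^*$ in two different ways and compare. First, the anti-automorphism property gives $(\alpha a)^* = a^* \alpha^*$, and substituting the defining relation \eqref{mudef} for both factors (applicable to $\alpha$ since $\alpha \in \Zent(L)$) yields
\[
(\alpha a)^* \;=\; (\mu(a)\,a)(\mu(\alpha)\,\alpha).
\]

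Next, I would repeatedly invoke the fact that $\mu(a)$, $\mu(\alpha)$, and $\alpha$ all lie in $\Zent(L) \subseteq \Nuc{L}$, so that they may be freely reassociated with the other factors, and moreover $\alpha$ commutes with $a$. A short sequence of such rebracketings and commutations transforms the right-hand side into $(\mu(\alpha)\mu(a))(\alpha a)$. On the other hand, applying \eqref{mudef} directly to the single element $\alpha a$ gives $(\alpha a)^* = \mu(\alpha a)(\alpha a)$. Right-cancellation in the loop then yields the desired identity $\mu(\alpha a) = \mu(\alpha)\mu(a)$ of \eqref{goodact}.

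For the final assertion, I would specialize $a$ to a central element $\beta \in \Zent(L)$, so that $\alpha\beta \in \Zent(L)$. The identity just established becomes $\mu(\alpha\beta) = \mu(\alpha)\mu(\beta)$, and since $\mu$ takes values in $\Zent(L)$, this is precisely the statement that $\mu$ restricts to a homomorphism of the abelian group $\Zent(L)$.

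There is no real obstacle here; the only thing requiring care is to invoke the nuclear property of central elements at each step of the regrouping, since in a general loop parentheses cannot be shifted freely. All other manipulations are direct consequences of the definitions of $*$ and of $\mu$.
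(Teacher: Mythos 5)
Your proposal is correct and follows essentially the same route as the paper: compute $(\alpha a)^*$ both as $\mu(\alpha a)\,\alpha a$ and as $a^*\alpha^* = \mu(\alpha)\mu(a)\,\alpha a$ (using centrality of $\alpha$, $\mu(\alpha)$, $\mu(a)$ to rebracket and commute), then cancel. The only superfluous remark is that \eq{mudef} applies to $\alpha$ ``since $\alpha \in \Zent(L)$''---it applies to every element of $L$; otherwise the argument matches the paper's one-line proof with the regrouping steps made explicit.
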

\begin{proof}
Compute $\mu(\alpha a) \alpha a = (\alpha a)^* = a^* \alpha^* = \mu(\alpha) \mu(a) \alpha a$.
\end{proof}

\begin{rem}
For any $a,b$ we have that
$\mu(a)\mu(b)ba = b^*a^* = (ab)^* = \mu(ab) ab$, showing that
\begin{equation}\label{niceobs}
{}[a,b] = \mu(a)\mu(b) \mu(ab)^{-1}.
\end{equation}
\end{rem}


\begin{rem}
A function $\mu \co L \ra \Zent(L)$ induces an involution on $L$ by $a^* = \mu(a)a$ if and only if it satisfies \eq{theyantisymm1} and \eq{niceobs}.
\end{rem}

\begin{prop}\label{basicomm2}
If $[L,L] \sub \KZent(L)$ and $*$ is any involution,
then
\begin{equation}\label{commutator*}
[a,b]^* = [b^*,a^*]
\end{equation}
for any $a,b \in L$.
\end{prop}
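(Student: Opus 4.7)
The plan is to apply the involution to the defining equation for the commutator and use that the commutant is $*$-invariant. First I would apply $*$ to the identity $ab = (ba)[a,b]$. Since $*$ is an anti-automorphism, this gives
\[
b^* a^* = (ab)^* = ((ba)[a,b])^* = [a,b]^* \cdot (a^* b^*).
\]

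Next I would verify that $[a,b]^* \in \KZent(L)$. By hypothesis $[a,b] \in [L,L] \sub \KZent(L)$, and the commutant is preserved by any involution: if $zx = xz$ for all $x \in L$, applying $*$ yields $x^* z^* = z^* x^*$, and as $x^*$ ranges over all of $L$ we conclude $z^* \in \KZent(L)$.

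Because $[a,b]^*$ lies in $\KZent(L)$, it commutes with the element $a^* b^*$, so
\[
b^* a^* = [a,b]^* \cdot (a^* b^*) = (a^* b^*) \cdot [a,b]^*.
\]
On the other hand, the defining equation \eq{commdef} applied to the pair $b^*, a^*$ reads
\[
b^* a^* = (a^* b^*) \cdot [b^*, a^*].
\]
Comparing the two expressions and cancelling $a^* b^*$ on the left (using that $L$ is a loop) yields $[a,b]^* = [b^*, a^*]$.

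The only subtle point is that $\KZent(L)$ is not assumed to be contained in $\Nuc{L}$, so I cannot rearrange parentheses freely; however, the calculation only requires the two-variable statement that $[a,b]^*$ commutes with the single element $a^* b^*$, which is exactly what membership in the commutant provides. Thus the argument goes through without needing any associativity.
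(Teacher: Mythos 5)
Your proof is correct and follows essentially the same route as the paper's: apply the involution to $ab = ba\cdot[a,b]$, compare with the defining equation for $[b^*,a^*]$, and cancel using that the relevant commutator terms lie in the commutant. The only difference is that you explicitly verify that $\KZent(L)$ is $*$-invariant (so that $[a,b]^*$ commutes with $a^*b^*$), a small point the paper leaves implicit.
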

\begin{proof}
From \eq{commdef} (twice) we get
$a^*b^*\cdot [b^*,a^*] = b^*a^* = (ab)^* = (ba\cdot [a,b])^* = [a,b]^* \cdot a^*b^*$, but the commutators commute with $a^*b^*$.
\end{proof}

\subsection{Super-central involutions}

An element~$a$ satisfying $a^* = a$ is called {\bf{symmetric}}. We need a condition for the commutators to be symmetric under the involution. Let $\Zent(L,*)$ denote the symmetric part of the center.

\begin{prop}\label{sc}
Let $L$ be a loop with a central involution $x^* = \mu(x)x$.
\begin{enumerate}
\item The following are equivalent:
\begin{enumerate}
\item[(a)] $\mu(a)^* = \mu(a)$ for all $a$;
\item[(b)] $\mu(a)^2 = 1$ for all $a$;
\item[(c)] every $a^2$ is symmetric.
\end{enumerate}
\item \label{sc2}
The following are equivalent:
\begin{enumerate}
\item[(d)] $[L,L] \sub \Zent(L,*)$;
\item[(e)] $[a,b]^2 = 1$ for every $a,b$.
\end{enumerate}
\item The conditions (a)--(c) imply the conditions (d),(e).
\end{enumerate}
\end{prop}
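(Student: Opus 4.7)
The plan is to derive the whole proposition from the two identities \eq{theyantisymm2} and \eq{niceobs} already established for central involutions, together with \Pref{basicomm1} and \Pref{basicomm2}. Throughout I would use, often silently, that each value of $\mu$ lies in the center and therefore in the nucleus, so that factors built from $\mu$-values may be moved and regrouped without regard for associativity.

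For part~(1), the equivalence of (a) and (b) is immediate from \eq{theyantisymm2}: since $\mu(a)^* = \mu(a)^{-1}$, the element $\mu(a)$ is symmetric if and only if $\mu(a)^2 = 1$. For (b)$\iff$(c), I would expand $(a^2)^* = a^* \cdot a^* = (\mu(a)a)(\mu(a)a)$; using centrality of $\mu(a)$ to collapse the product yields $\mu(a)^2 a^2$, so $a^2$ is symmetric exactly when $\mu(a)^2=1$.

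For part~(2), I would first note that a central involution already forces $[L,L]\sub\Zent(L)$, since \eq{niceobs} writes every commutator as a product of central elements. Hence \Pref{basicomm1} applies, giving $[b^*,a^*] = [a^*,b^*]^{-1}$, and \Pref{basicomm2} applies as well, giving $[a,b]^* = [b^*,a^*]$. The one remaining step is to identify $[a^*,b^*]$: writing $a^* = \mu(a)a$ and $b^* = \mu(b)b$, the central factor $\mu(a)\mu(b)$ may be pulled out from both $a^*b^*$ and $b^*a^*$, so that the defining relation $a^*b^* = b^*a^*\cdot [a^*,b^*]$ cancels to $ab = ba\cdot[a^*,b^*]$, and hence $[a^*,b^*] = [a,b]$. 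Combining the three displays yields $[a,b]^* = [a,b]^{-1}$, so $[a,b]$ is symmetric if and only if $[a,b]^2 = 1$; this is (d)$\iff$(e).

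Part~(3) is then a one-line consequence of \eq{niceobs}: since every factor is central, one may square to get $[a,b]^2 = \mu(a)^2\mu(b)^2\mu(ab)^{-2}$, and condition (b) makes each squared factor trivial. Thus (e) holds, and (d) follows from part~(2). I do not foresee any real obstacle; the whole proposition reduces to arithmetic with central $\mu$-values and the pre-established identities. The most delicate point is the verification $[a^*,b^*] = [a,b]$ in part~(2), but even this is a single cancellation of the central factor $\mu(a)\mu(b)$.
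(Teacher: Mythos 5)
Your proposal is correct and follows essentially the same route as the paper: (a)$\Leftrightarrow$(b) from \eq{theyantisymm2}, (b)$\Leftrightarrow$(c) by computing $(a^2)^*=\mu(a)^2a^2$ (equivalently $\mu(a^2)=\mu(a)^2$ via \eq{niceobs}), the chain $[a,b]^*=[b^*,a^*]=[a,b]^{-1}$ via \eq{commutator*}, \eq{basicomm1-eq} and the central cancellation $[a^*,b^*]=[a,b]$, and part~(3) by squaring \eq{niceobs}. The only cosmetic difference is that you apply \Pref{basicomm1} to the starred pair before cancelling the $\mu$-factors, whereas the paper cancels first and then inverts; the ingredients are identical.
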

\begin{proof}
That (a),(b) are equivalent follows from \eq{theyantisymm2}. By \eq{niceobs} we have that $\mu(a^2) = \mu(a)^2$, so (b) is equivalent to $(a^2)^* = a^2$. Since the involution is central, and using \eq{commutator*} and \eq{basicomm1-eq}, we have that $[a,b]^* = [b^*,a^*] = [\mu(b)b,\mu(a)a] = [b,a] = [a,b]^{-1}$; so (d) and (e) are equivalent.
The third statement follows from \eq{niceobs}.
\end{proof}

A central involution satisfying the conditions (a)--(c) of \Pref{sc} is called {\bf{super-central}}. Central and super-central involutions appear naturally when studying the Cayley-Dickson double.

\subsection{Normal involutions}

We present another type of involution, which naturally appear when we study Moufang loops in \Sref{PIII}.

We say that an involution~$*$ on~$L$ is {\bf{normal}} if $\nu(x) = x^*x$ is in the center for every $x \in L$. The inverse map, for loops in which inverse is well-defined, is a key example. As before, when $*$ is a normal involution we always denote by $\nu \co L \ra \Zent(L)$ the associated map defined by $\nu(x) = x^*x$.

\begin{rem}\label{normalInv}
Let $*$ be a normal involution. Then $(\nu(x)^{-1}x^*)x = 1 = x(x^*\nu(x^*)^{-1})$ shows that
\begin{equation}\label{starnormal}
x^* = \nu(x)x^\lam = \nu(x^*)x^{\rho}.
\end{equation}
Moreover $[x,x^*]=1$ for every $x$ if and only if $\nu(x^*)=\nu(x)$ for every $x$, if and only if the inverse is well-defined.
\end{rem}

When the center is trivial, the inverse is the only normal involution. But normal involutions may exist also when the inverse is not necessarily defined.
\begin{exmpl}[A loop with normal involution where inverses are not defined]\label{1ex}
The involution in \Exref{-1ex} is normal, with $\nu(\s) = -1$ and $\nu(\s^2) = 1$. But the right inverse $\s^{\rho} = \s^2$ differs from the left inverse $\s^{\lam} = -\s^2$.
\end{exmpl}

\begin{rem}
For any normal involution we always have \begin{equation}\nu(x^*) = \nu(x^{\rho})^{-1}.\end{equation}
(Indeed $x = x^{**} = \nu(x^*)x^{*\lam} = \nu(x^*) x^{\rho*} = \nu(x^*)\nu(x^{\rho})x^{\rho\lam} = \nu(x^*)\nu(x^{\rho})x$.)
\end{rem}

Suppose the involution is normal. Modulo the center, the left- and right- inverses coincide. This proves that the involution induces the inverse operation on $L/\Zent(L)$, which therefore has to satisfy the anti-automorphic inverse property. (Compare this necessary condition to \Rref{L/Z-*1}).

It is easy to see that the associated map $\nu$ satisfies $\nu(xy) = \nu(x)\nu(y)$ if $x$ or $y$ is in the nucleus. Thus, a normal involution always induces a homomorphism $\nu \co \Nuc{L} \ra \Zent(L,*)$. Recall that a loop satisfies the {\bf{anti-automorphic inverse property}} if $(xy)^\lam = y^\lam x^\lam$ (equivalently $(xy)^\rho = y^\rho x^\rho$).
\begin{prop}\label{normalhom}
Let $L$ be a loop with normal involution $*$. Then $L$ has the anti-automorphic inverse property if and only if $\nu \co L \ra \Zent(L,*)$ is a homomorphism.
\end{prop}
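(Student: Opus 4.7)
The plan is to compute $(xy)^*$ in two different ways and compare. On one hand, since $\nu(xy) \in \Zent(L)$, the identity \eqref{starnormal} gives $(xy)^* = \nu(xy)(xy)^\lam$. On the other hand, since $*$ is an anti-automorphism, $(xy)^* = y^*x^* = (\nu(y)y^\lam)(\nu(x)x^\lam)$. Because $\nu(x),\nu(y)$ lie in the center (hence in the nucleus), they can be freely reassociated and commuted with everything, so this simplifies to $\nu(x)\nu(y)\cdot y^\lam x^\lam$. Equating the two expressions yields
\[
\nu(xy)(xy)^\lam \;=\; \nu(x)\nu(y)\cdot y^\lam x^\lam.
\]

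From this single identity both implications follow. If $\nu$ is a homomorphism, cancel $\nu(xy)=\nu(x)\nu(y)$, which is legitimate because it is a central (hence invertible, as $\Zent(L)$ is an abelian group) element; what remains is $(xy)^\lam = y^\lam x^\lam$, the anti-automorphic inverse property. Conversely, if that property holds, substitute $(xy)^\lam = y^\lam x^\lam$ into the left-hand side and cancel $(xy)^\lam = y^\lam x^\lam$ from both sides (possible since $y^\lam x^\lam$ has $xy$ as a right inverse), yielding $\nu(xy) = \nu(x)\nu(y)$. One should also check en passant that the target of $\nu$ really is $\Zent(L,*)$, which is immediate: $\nu(x)^* = (x^*x)^* = x^*x^{**} = x^*x = \nu(x)$.

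The only delicate point is bookkeeping the associations when rewriting $y^*x^* = \nu(y)y^\lam\cdot\nu(x)x^\lam$ as $\nu(x)\nu(y)(y^\lam x^\lam)$; the loop is not associative, so each step must be justified by the fact that $\nu(x),\nu(y)$ lie in $\Nuc{L}\cap \KZent(L)$. Once this is dispatched, no real obstacle remains — the proof reduces to a single computation read in two directions.
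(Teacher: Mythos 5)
Your proof is correct and is essentially identical to the paper's, which consists of the single identity $\nu(xy)(xy)^{\lam} = \nu(x)\nu(y)\, y^{\lam}x^{\lam}$ obtained by evaluating $(xy)^*$ via \eqref{starnormal} and via anti-multiplicativity, then read in both directions exactly as you do. Your added checks (centrality justifying the reassociation, cancellation on the right, and $\nu(x)^*=\nu(x)$) are details the paper leaves implicit.
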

\begin{proof}
Using \eq{starnormal} we have that $\nu(xy)(xy)^{\lam} = \nu(x)\nu(y) y^{\lam}x^{\lam}$.
\end{proof}

Under the anti-automorphic inverse property the map $c \mapsto cc^*$ is a ``norm'' homomorphism to the center, hence the term ``normal''; also see \Rref{Shtut}.

\begin{rem}
In \cite{Chein+}, Chein constructed extensions of Moufang loops by $\Z_{2k}$ which are Moufang; this uses an ``appropriately chosen'' map $\theta$ (see \cite[Example~II.3.10]{Book8:II}). A normal involution satisfies this condition.
\end{rem}

\subsection{Central-by-abelian loops with involution}

Let $L$ be a central-by-abelian loop with an involution $*$.
Similarly to \eq{commutator*}, since now associators are central, we have
\begin{equation}\label{associator*}
[c^*,b^*,a^*]^{-1} = [a,b,c]^*.
\end{equation}
Indeed,
$[a,b,c]^* \cdot ( c^*(b^*a^*) \cdot [c^*,b^*,a^*] ) =
[a,b,c]^* \cdot (c^*b^*)a^* = (a(bc)\cdot[a,b,c])^* = ((ab)c)^* = c^*(b^*a^*)$, and $[a,b,c]^* \in \Zent(L)$.


\begin{prop}
Let $L$ be a central-by-abelian loop with a central involution $*$.  Then
$$[a,b,c] \cdot [c,b,a] = \mu([a,b,c])^{-1}.$$
\end{prop}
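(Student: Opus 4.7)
The plan is to combine the identity \eq{associator*} with the observation that in a central-by-abelian loop, associators are insensitive to central factors in any of their three arguments. First I would establish (or rather recall, since it follows immediately from the centrality of the nucleus containing $\Zent(L)$) that for any central $\alpha$ and any $a,b,c \in L$, the equalities $[\alpha a,b,c] = [a,\alpha b,c] = [a,b,\alpha c] = [a,b,c]$ hold. This is a routine computation: for instance $((\alpha a)b)c = \alpha((ab)c)$ and $(\alpha a)(bc) = \alpha(a(bc))$, so the associator is unchanged.

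Next I would apply this to the expression $[c^*,b^*,a^*]$. Since $*$ is central, $c^* = \mu(c)c$, $b^* = \mu(b)b$, $a^* = \mu(a)a$ with each $\mu$-value in $\Zent(L)$; absorbing the three central factors then yields $[c^*,b^*,a^*] = [c,b,a]$. Substituting this into \eq{associator*} gives
\begin{equation*}
[a,b,c]^* = [c,b,a]^{-1}.
\end{equation*}

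Finally, because $[a,b,c]$ lies in $\Zent(L)$ (as $L$ is central-by-abelian), the central involution acts on it by $[a,b,c]^* = \mu([a,b,c]) \cdot [a,b,c]$. Equating the two expressions for $[a,b,c]^*$ gives $\mu([a,b,c]) \cdot [a,b,c] = [c,b,a]^{-1}$, and multiplying both sides by $[c,b,a]$ (which is central, so no associativity issues arise) produces the claimed identity $[a,b,c] \cdot [c,b,a] = \mu([a,b,c])^{-1}$.

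There is no real obstacle here; the only point requiring verification is the centrality-absorption step for associators, and this is immediate from $\Zent(L) \subseteq \Nuc{L}$. The proof is essentially a one-line application of \eq{associator*}, made possible by the central-by-abelian hypothesis.
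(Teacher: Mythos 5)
Your proof is correct, and it takes a somewhat different route from the paper's. The paper applies the involution directly to the two triple products $((ab)c)^*$ and $(a(bc))^*$, expands everything via $x^* = \mu(x)x$, compares with the defining relations of the two associators to get $[a,b,c][c,b,a] = \mu(a(bc))\mu((ab)c)^{-1}$, and then extracts $\mu([a,b,c])^{-1}$ using \eq{goodact}. You instead factor the argument through the already-established identity \eq{associator*}, reduce $[c^*,b^*,a^*]$ to $[c,b,a]$ by absorbing the central factors $\mu(a),\mu(b),\mu(c)$ into the associator, and then evaluate $[a,b,c]^*$ as $\mu([a,b,c])[a,b,c]$. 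This is more modular: it reuses \eq{associator*} rather than re-deriving it inline, and the only extra ingredient is the invariance of associators under central translates of their arguments, which follows from $\Zent(L) \sub \Nuc{L}$ exactly as you say, and which the paper itself invokes (``by centrality we have that $[c^*,b^*,a^*]=[c,b,a]$'') in the proof of the very next proposition. The two arguments have the same depth; yours is arguably cleaner. One cosmetic remark: the step $[a,b,c]^* = \mu([a,b,c])\cdot[a,b,c]$ needs only that $*$ is central (it holds for every element), not that $[a,b,c]$ is central; centrality of the associators and commutators is what you actually need for the final cancellation to go through without associativity issues.
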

\begin{proof}
Since $x^* = \mu(x)$ and $\mu(x)$ is central for every $x$, we have that
$$\mu((ab)c)\,(ab)c = ((ab)c)^* = c^*(ab)^* = c^*(b^*a^*) = \mu(a)\mu(b)\mu(c)\, c(ba);$$
$$\mu(a(bc))\,a(bc) = (a(bc))^* = (bc)^*a^* = (c^*b^*)a^* = \mu(a)\mu(b)\mu(c)\, (cb)a.$$
Since $(ab)c =  a(bc) \cdot [a,b,c]$ and $(cb)a = c(ba) \cdot [c,b,a]$, we conclude that
$[a,b,c][c,b,a] = \mu(a(bc))\mu((ab)c)^{-1}$, which is equal to $\mu([a,b,c])^{-1}$ by applying \eq{goodact} to \eq{assocdef}.
%
\end{proof}

Analogously to \Pref{sc}, we have:
\begin{prop}\label{sc3}
Let $L$ be a central-by-abelian loop with a central involution $x^* = \mu(x)x$.
The following are equivalent:
\begin{enumerate}
\item[(d$'$)] $[L,L,L] \sub \Zent(L,*)$;
\item[(e$'$)] $[a,b,c][c,b,a] = 1$ for every $a,b,c$.
\end{enumerate}
\end{prop}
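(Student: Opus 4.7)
The plan is to reduce the equivalence (d$'$)$\Leftrightarrow$(e$'$) to the identity
\[
[a,b,c]\cdot[c,b,a] \;=\; \mu([a,b,c])^{-1}
\]
established in the immediately preceding proposition, together with one soft structural observation about the symmetric part of the center.

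First I would record that $\Zent(L,*)$ is a subgroup of the abelian group $\Zent(L)$. The reason is that the anti-automorphism~$*$, restricted to the commutative subloop $\Zent(L)$, is a genuine group automorphism of order two (the anti-multiplicativity trivializes on a commutative group), so its fixed-point set is closed under the abelian group operation. Since $L$ is central-by-abelian, we moreover have $[L,L,L]\sub\Zent(L)$, and $[L,L,L]$ is generated, as a subgroup of this abelian group, by the associators $[a,b,c]$. Combining the two facts, the inclusion $[L,L,L]\sub\Zent(L,*)$ is equivalent to the assertion that every associator $[a,b,c]$ lies in $\Zent(L,*)$, i.e.\ is symmetric.

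Second, I would translate ``$[a,b,c]$ is symmetric'' via the defining relation $x^*=\mu(x)x$ of the central involution: $[a,b,c]^*=[a,b,c]$ holds exactly when $\mu([a,b,c])=1$. Invoking the cited identity, $\mu([a,b,c])=1$ if and only if $[a,b,c][c,b,a]=1$. Chaining these equivalences for all $a,b,c\in L$ gives (d$'$)$\Leftrightarrow$(e$'$).

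I do not foresee any genuine obstacle: the whole computational content of the proposition has been pre-packaged in the preceding result, and the only conceptual step is the remark that an involutive anti-automorphism restricts to an automorphism on an abelian subgroup, so that its fixed points form a subgroup — this is what lets us pass from ``generators are symmetric'' to ``the entire subloop $[L,L,L]$ is symmetric'' without any further closure argument.
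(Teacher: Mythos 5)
Your proof is correct and follows essentially the same route as the paper: the paper derives $[a,b,c]^*=[c,b,a]^{-1}$ from \eq{associator*} together with the invariance of associators under central translates, while you cite the preceding proposition's identity $[a,b,c][c,b,a]=\mu([a,b,c])^{-1}$, and these are the same identity in two guises since $[a,b,c]^*=\mu([a,b,c])[a,b,c]$. Your explicit remark that $\Zent(L,*)$ is a subgroup of $\Zent(L)$, so that symmetry of the generating associators propagates to all of $[L,L,L]$, is a point the paper leaves tacit, and it is a welcome addition.
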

\begin{proof}
By centrality we have that $[c^*,b^*,a^*] = [c,b,a]$. Now by \eq{associator*} we have that $[a,b,c]^*[c^*,b^*,a^*] = [a,b,c]^*[c,b,a] = 1$, so $[a,b,c]$ is symmetric if and only if $[a,b,c][c,b,a] = 1$.
\end{proof}

\section{Cayley-Dickson doubling}\label{sec:CD}

Composition algebras are obtained from the base field by a repeated use of the Cayley-Dickson doubling process for algebras.
In this section we describe such a process for loops.

\subsection{The doubling process}

Let $(L,*)$ be a loop with involution.  Let~$\gamma$ be a central element of~$L$.
The {\bf{Cayley-Dickson double}} of $L$ is the loop $\CD(L,*,\gamma) = L \cup Lj$, whose multiplication table is defined by
\begin{eqnarray}
a(bj) & = & (ba)j \label{Mul1}  \\ 
(aj)b & = & (ab^*)j \\ 
(aj)(bj) & = & \gamma b^*a \label{Mul3}
\end{eqnarray}
for any $a,b\in L$. In particular $jb = b^*j$, $j^2 = \gamma$, and $[j,j,j] = \gamma^*\gamma^{-1}$.

\begin{rem}
Our definition is compatible with positioning the new generator~$j$ to the right, letting it act by $jb = b^*j$. In most sources (e.g.\
\cite{Sch}; \cite[p.~458]{BOI}; \cite[Section~6.3]{Conway}; and \cite[p.~28]{ZSSS}) the multiplication scheme is $a(bj)=(a^*b)j$; $(aj)b = (ba)j$; and $(aj)(bj) = \gamma ba^*$. One would obtain our scheme by applying this scheme to the opposite loop, then taking the opposite. Alternative schemes are compared and studied in \cite{KPVonallCD}, which, following \cite{Vvar}, shows that the construction we use here (equivalently its conjugate under the opposite) best preserves properties of the double.
\end{rem}

\begin{prop}\label{Disloop}
The magma $\CD(L,*,\gamma)$ is indeed a loop with respect to the operations defined in \eq{Mul1}--\eq{Mul3}.
\end{prop}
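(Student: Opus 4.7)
The claim is structural: we must exhibit an identity in $\CD(L,*,\gamma)$ and verify that every left and every right translation is a bijection of $L \cup Lj$. Both tasks reduce to routine case analysis on whether the element involved lies in the $L$-part or the $Lj$-part, using the bijectivity of translations in $L$ itself and of the involution $*$.

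\textbf{Step 1 (identity).} I would check that $1 \in L \subset \CD(L,*,\gamma)$ is a two-sided identity. For $a \in L$ the identity $1 \cdot a = a \cdot 1 = a$ holds in $L$. Applying \eq{Mul1} with $a = 1$ gives $1 \cdot (bj) = (b \cdot 1)j = bj$; applying \eq{Mul2} with $b = 1$ and using $1^* = 1$ (which follows at once from $*$ being an order-two anti-automorphism) gives $(aj) \cdot 1 = (a \cdot 1^*)j = aj$.

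\textbf{Step 2 (left and right translations).} I would then verify that for each $x \in \CD(L,*,\gamma)$ the maps $L_x$ and $R_x$ are bijections of $L \cup Lj$, splitting each into the four sub-cases determined by which half the element being translated sits in. For $x = a \in L$, left multiplication preserves the decomposition $L \cup Lj$: on $L$ it acts as the left translation $L_a$ of $L$, and on $Lj$ it sends $bj \mapsto (ba)j$ by \eq{Mul1}, which is bijective since $R_a$ is a bijection of $L$. For $x = aj \in Lj$, left multiplication swaps the two halves: on $L$ it is $b \mapsto (ab^*)j$ by \eq{Mul2}, the composition of $*$ with $L_a$, and on $Lj$ it is $bj \mapsto \gamma(b^*a)$ by \eq{Mul3}, the composition of $*$, $R_a$, and $L_\gamma$; all three are bijections of $L$. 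The analysis for right translations is entirely analogous, using \eqs{Mul1}{Mul3} and the bijectivity of $*$, $L_b$, $R_{b^*}$, and $L_\gamma$ on $L$.

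\textbf{Main obstacle.} There is really no obstacle; the content is to make sure each of the eight sub-cases (four translations, each acting on the two halves) is handled by writing the action as a composition of bijections of $L$. The only subtlety is to note that centrality of $\gamma$ is not actually used for invertibility of the translations in \eq{Mul3} (only bijectivity of $L_\gamma$, which holds in any loop), while centrality will be essential in later sections where multiplicative identities involving $\gamma$ are combined with other structural constants.
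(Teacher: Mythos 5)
Your proposal is correct and takes essentially the same approach as the paper: a direct, case-by-case verification that each left and right translation is a bijection of $L \cup Lj$, built from translations of $L$ and the involution. The paper's own proof is terser — it merely records the explicit one-sided inverse formulas $(aj)^\lam = (\gamma^{-1}(a^\lam)^*)j$ and $(aj)^\rho = (\gamma^{-*}(a^\lam)^*)j$ (which are reused later for \Cref{Inv-perfect}) and asserts that invertibility of the translations is then easy to check, which is precisely the content of your Step~2.
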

\begin{proof}
Verify that the left- and right- inverse functions extend from $L$ to $\CD(L,\gamma)$ as
\begin{equation}\label{gam+rho}
(aj)^\lam = (\gamma^{-1} (a^\lam)^*)j, \qquad (aj)^\rho = (\gamma^{-*} (a^\lam)^*)j.
\end{equation}
It is then easy to check that multiplication by $a \in L$ or $aj$ from left or right is invertible.
\end{proof}
\begin{cor}\label{Inv-perfect}
The inverse of elements in $Lj$ is well-defined if and only if $\gamma^* = \gamma$. Therefore, $\CD(L,*,\gamma)$ has well-defined inverses if and only if $L$ has well-defined inverses and $\gamma^* = \gamma$.
\end{cor}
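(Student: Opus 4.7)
The plan is to read off everything from the explicit one-sided inverse formulas established in the proof of \Pref{Disloop}, namely
\[
(aj)^\lambda = (\gamma^{-1}(a^\lambda)^*)\,j, \qquad (aj)^\rho = (\gamma^{-*}(a^\lambda)^*)\,j.
\]
These two elements of $Lj$ coincide (for a fixed $a$) if and only if $\gamma^{-1}(a^\lambda)^* = \gamma^{-*}(a^\lambda)^*$, and by right cancellation in the loop $L$ this is equivalent to $\gamma^{-1}=\gamma^{-*}$, i.e.\ $\gamma^*=\gamma$. (One can also simply take $a=1$: then $(j)^\lambda = \gamma^{-1}j$ while $(j)^\rho = \gamma^{-*}j$, so the condition $\gamma^*=\gamma$ is forced already on the element $j$.) Conversely, if $\gamma^*=\gamma$ the two formulas literally coincide, so every element of $Lj$ has a well-defined two-sided inverse.

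For the global statement, decompose $\CD(L,*,\gamma) = L\dcup Lj$ and observe that for $a\in L$ the one-sided inverses of $a$ computed inside $\CD(L,*,\gamma)$ cannot lie in $Lj$: using \eq{Mul1} and the analogous formula for $(bj)a$, any product $a\cdot(bj)$ or $(bj)\cdot a$ already lies in $Lj$ and so cannot equal $1\in L$. Therefore the one-sided inverses of $a$ inside $\CD(L,*,\gamma)$ must lie in $L$, and they there coincide with $a^\lambda,a^\rho$ computed in $L$ itself. Hence the elements of $L$ have well-defined inverses in $\CD(L,*,\gamma)$ if and only if $L$ has well-defined inverses, and combining with the previous paragraph proves the ``therefore'' clause.

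There is no substantive obstacle: the corollary is a direct deduction from the formulas recorded in the proof of \Pref{Disloop}, the only two bookkeeping points being (i) the use of centrality of $\gamma$ when cancelling, and (ii) the observation that the $L$-component and the $Lj$-component contribute independent requirements, since the inverse problem for each element stays within its own coset of $L$.
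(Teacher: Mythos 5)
Your proof is correct and follows exactly the route the paper intends: the corollary is stated without a separate proof precisely because it is read off from the formulas \eq{gam+rho} recorded in the proof of \Pref{Disloop}, which is what you do (including the clean observation that $a=1$ already forces $\gamma^*=\gamma$). The only cosmetic remark is that cancelling $(a^\lam)^*$ needs only right cancellation in $L$, not centrality of $\gamma$; centrality is used earlier, in deriving the inverse formulas themselves.
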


Here is the most famous instance of this loop construction:
\begin{exmpl}\label{onChein}
Let $G$ be a group. Then the double $\CD(G,{\,}^{-1},1)$, usually denoted $M(G,2)$, is a Moufang loop, see \cite{Chein}. Chein furthermore proved that a Moufang loop whose every minimal set of generators has an element of order~$2$ is of this form, and systematically utilized this construction in \cite{Chein+}.
\end{exmpl}

The interplay of the doubling construction with subloops and quotients is rather clear. Recall that a subloop of $L$ is {\bf{normal}} if it is the kernel of a homomorphism from $L$.
\begin{lem}
Let $L$ be a loop with involution. Let $H \leq L$ be a subloop closed under the involution.
\begin{enumerate}
\item The subloop $\sg{H,j}$ of $\,\CD(L,*,\gamma)$ is isomorphic to $\CD(\sg{H,\gamma},*,\gamma)$.
\item
If $N$ is normal in $L$, then $N$ is normal in $\CD(L,*,\gamma)$, and
$$\CD(L,*,\gamma)/N \isom \CD(L/N,*,\bar{\gamma})$$
where $\bar{\gamma}$ denotes the image of $\gamma$ in $L/N$.
\end{enumerate}
\end{lem}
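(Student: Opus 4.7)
The plan for both parts is to verify directly, from the multiplication rules~\eqref{Mul1}--\eqref{Mul3}, that the natural set-theoretic identification underlies a loop isomorphism.

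For part~(1), I would first show that $S := \sg{H,\gamma} \cup \sg{H,\gamma}\,j$ is closed under the operations of $\CD(L,*,\gamma)$. Products inside $\sg{H,\gamma}$ remain there; $a(bj) = (ba)j$ and $(aj)b = (ab^*)j$ land in $\sg{H,\gamma}\,j$ whenever $a,b \in \sg{H,\gamma}$, using that $\sg{H,\gamma}$ is closed under~$*$ (which follows because $H$ is $*$-closed and $\gamma^* \in \Zent(L)$ can be tacitly placed among the generators); and $(aj)(bj) = \gamma b^* a$ lies in $\sg{H,\gamma}$ because $\gamma \in \sg{H,\gamma}$. Combined with the inverse formulae~\eqref{gam+rho}, this identifies $S$ as a subloop containing $H$ and $j$, so $S = \sg{H,j}$. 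The map sending $h \mapsto h$ and $hj \mapsto hj$ from $\CD(\sg{H,\gamma},*,\gamma)$ onto $\sg{H,j}$ is then a loop isomorphism, since both sides are governed by the same formulae~\eqref{Mul1}--\eqref{Mul3} with the same parameter~$\gamma$ and the same involution restricted from~$L$.

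For part~(2), I would first note the implicit assumption that $N$ is $*$-invariant, so that $L/N$ inherits an involution and $\bar\gamma$ is central there. Then define
\[
\tilde\pi \co \CD(L,*,\gamma) \lra \CD(L/N,*,\bar\gamma), \qquad \tilde\pi(a) = \bar a, \quad \tilde\pi(aj) = \bar a\,\bar j.
\]
A case-by-case check on the three products $a(bj)$, $(aj)b$, and $(aj)(bj)$ shows that $\tilde\pi$ is a loop homomorphism, since the canonical projection $\pi \co L \to L/N$ respects~$*$ and sends~$\gamma$ to~$\bar\gamma$. The map is clearly surjective, and its fiber above the identity is precisely~$N$; hence $N$ is a normal subloop of $\CD(L,*,\gamma)$, and the first isomorphism theorem for loops yields $\CD(L,*,\gamma)/N \isom \CD(L/N,*,\bar\gamma)$.

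The main obstacle is really just bookkeeping: ensuring in part~(1) that $\gamma^*$ is accounted for inside the putative subloop $\sg{H,\gamma}$ (so that the involution restricts), and ensuring in part~(2) that the involution descends to $L/N$ (which requires $N$ to be $*$-invariant). Once these two points are clarified, both isomorphisms follow mechanically from the defining doubling formulae.
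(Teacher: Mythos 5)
Your proof is correct. The paper states this lemma without proof (introducing it as ``rather clear''), and the case-by-case verification from \eq{Mul1}--\eq{Mul3} that you give is the intended argument; moreover the two caveats you flag are genuine implicit hypotheses of the statement --- $\sg{H,\gamma}$ must be closed under $*$ (automatic under the paper's standing assumption that $\gamma$ is symmetric), and $N$ must be $*$-invariant so that the involution, and hence the doubling, descends to $L/N$. The only small addition I would make in part (1) is to check closure of $S$ under the division operations $/$ and $\backslash$ as well as under multiplication (e.g.\ $a/(bj)=(\gamma b^*\backslash a)j$ and $(bj)\backslash a=(\gamma^* b^*\backslash a^*)j$, formulas recorded later in the paper), since that is what certifies $S=\sg{H,j}$ for infinite loops; this again only uses $\gamma^{\pm1},\gamma^*\in\sg{H,\gamma}$.
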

Another observation, which uses the normality $L \normal \CD(L)$, is concerned with finitely generated subloops:
\begin{rem}\label{geni}
If a subloop of $\CD(L)$ which is not contained in~$L$ can be generated by $k$ elements, then it can be written in the form $\sg{a_1,\dots,a_{k-1},bj}$ for $a_1,\dots,a_{k-1},b \in L$ (replace $\sg{aj,bj}$ by $\sg{(aj)(bj),bj}$ when necessary).
\end{rem}

\begin{rem}
Recall that a finite loop is {\bf{Lagrange}} is the order of every subloop divides the order of the loop; and {\bf{sub-Lagrange}} if every subloop is Lagrange. By \cite[Lemma~V.2.1]{Bruck}, both properties pass from $L$ to its Cayley-Dickson double.
\end{rem}

\subsection{Extending the involution}

Let $(L,*)$ be a loop with involution and $M = \CD(L,*,\gamma)$ its Cayley-Dickson double with respect to some central element $\gamma$.
\begin{rem}
If $(aj)^* = \E(a)j$ extends the involution to $M$ for some bijective map $\E \co L \ra L$, then necessarily $\E(a) = ea$ for $e = \E(1)$.
A priori, $(aj)^* = (ea)j$ extends the involution if and only if $e^2 = 1$, $(ea)b = e(ab)$ and $\gamma (ea)^*(eb) = \gamma^* a^*b$. As detailed below, these conditions hold, for example, if $e \in \Nuc{L}$, $e^2 = 1$, and $\gamma e$ is symmetric.
%
\end{rem}
In the interest of simplicity, we will assume from now on that $(aj)^* = (\epsilon a)j$ for~$\epsilon$ a \emph{central} element.
\begin{prop}\label{basiccond}
Let~$\epsilon$ be a central element of $L$.
The involution $*$ extends from $L$ to $\CD(L,\gamma)$ by
\begin{equation}\label{extinv}
(a j)^* = (\epsilon a) j \qquad  (a \in L),
\end{equation}
if and only if
\begin{equation}\label{thecond}
\epsilon^2 = 1, \qquad (\epsilon\gamma)^* = \epsilon \gamma.
\end{equation}
\end{prop}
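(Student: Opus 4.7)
The plan is to treat $*$ as already given on $L$ and check exactly what is required to extend it to $M := \CD(L,*,\gamma)$ via \eq{extinv}. The extension is automatically a bijection on each coset ($L$ by assumption, and $Lj$ because left multiplication by $\epsilon$ is bijective). So there are really two things to verify: that the map has order~$2$, and that it is anti-multiplicative. Each will force one of the two listed conditions.

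First I would address the order-$2$ condition. Applying the proposed formula twice and using that $\epsilon$ (and hence $\epsilon^*$) lies in the centre of $L$ gives
\[
((aj)^{*})^{*} = ((\epsilon a)j)^{*} = (\epsilon(\epsilon a))j = (\epsilon^{2} a)j,
\]
so the extension is an involution on $Lj$ if and only if $\epsilon^{2}=1$. On $L$ nothing has to be checked. This yields the first condition of \eq{thecond}.

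Next I would verify the anti-automorphism identity $(xy)^{*}=y^{*}x^{*}$ by splitting into the four cases according to which of $x,y$ lies in $L$ versus in $Lj$. The case $x,y\in L$ is given. For $x=a\in L$, $y=bj$, the multiplication rule \eq{Mul1} and centrality of $\epsilon$ give
\[
(a(bj))^{*}=((ba)j)^{*}=(\epsilon(ba))j \quad\text{and}\quad (bj)^{*}a^{*}=((\epsilon b)j)a^{*}=((\epsilon b)a)j,
\]
which agree because $\epsilon$ is central (hence nuclear). The case $x=aj$, $y=b\in L$ is entirely symmetric, using $(aj)b=(ab^{*})j$ together with centrality of $\epsilon$. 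Neither of these cases imposes a new condition.

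Finally I would handle the critical case $x=aj$, $y=bj$, which is where the second condition of \eq{thecond} enters. Using \eq{Mul3} and centrality of $\gamma$, and the fact that $*$ preserves the centre (so $\gamma^{*},\epsilon^{*}$ are central),
\[
((aj)(bj))^{*}=(\gamma\, b^{*}a)^{*}=\gamma^{*} a^{*} b,
\]
while
\[
(bj)^{*}(aj)^{*}=((\epsilon b)j)((\epsilon a)j)=\gamma\,(\epsilon a)^{*}(\epsilon b)=\gamma\,\epsilon^{*}\epsilon\, a^{*}b.
\]
So the identity holds in this case exactly when $\gamma^{*}=\gamma\,\epsilon^{*}\epsilon$. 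Given $\epsilon^{2}=1$ (whence $(\epsilon^{*})^{2}=1$), multiplying by $\epsilon^{*}$ shows this is equivalent to $\epsilon\gamma=\gamma^{*}\epsilon^{*}=(\epsilon\gamma)^{*}$, i.e.\ to the second condition of \eq{thecond}. Combining the four cases proves both directions.

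The only substantive step is the $(Lj)\times(Lj)$ computation; the other cases are forced purely by centrality. The mild obstacle is the bookkeeping around $\epsilon^{*}$, which need not equal $\epsilon$ a priori even when $\epsilon^{2}=1$, so one must be careful to express the final obstruction in the symmetric form $(\epsilon\gamma)^{*}=\epsilon\gamma$ rather than, say, $\gamma^{*}=\gamma$ or $\epsilon^{*}=\epsilon$ separately.
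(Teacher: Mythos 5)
Your proof is correct and follows essentially the same route as the paper: the paper derives $\epsilon^2=1$ from $j^{**}=j$ and $\gamma^*=\gamma\epsilon^*\epsilon$ from $(j^2)^*=(j^*)^2$ (your $a=b=1$ specializations), and leaves the converse as "one can verify using the defining relations," which is exactly the four-case computation you carry out. Your write-up is in fact slightly more complete, and your care with $\epsilon^*$ versus $\epsilon$ matches the paper's handling.
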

\begin{proof}
First assume $(aj)^* = (\epsilon a)j$ extends the involution. Then $j = j^{**} = (\epsilon j)^* = (\epsilon^2) j$, so $\epsilon^2 = 1$. Also, applying the involution to $j^2 = \gamma$ we get $\gamma^* = (j^2)^* = (j^*)^2 = (\epsilon j)(\epsilon j) = \gamma \epsilon^*\epsilon$, so that $\epsilon^* \gamma^* = (\epsilon \gamma)^* = \gamma^* \epsilon^{-*} = \epsilon \gamma$.

On the other hand, if $\epsilon^2 = 1$ and $\epsilon \gamma$ is symmetric, one can verify that $x^{**} = x$ and $(xy)^*= y^* x^*$ for every $x,y \in \CD(L,\gamma)$ using the defining relations.
\end{proof}

The data defining the Cayley-Dickson double always includes the loop~$L$, the involution~$*$, and central elements~$\gamma,\epsilon$ satisfying the condition \eq{thecond}. As permitted by the context, we will interchangeably denote the double by $\CD(L)$, $\CD(L,*)$, $\CD(L,*,\gamma)$ or $\CD(L,*,\gamma,\epsilon)$. The following observations will be used frequently.
\begin{rem}\label{ongamma}
\begin{enumerate}
\item
$\gamma^* = \gamma$ if and only if $\epsilon^* = \epsilon$.
\item\label{ongamma2} $\gamma^{*}\gamma^{-1} = \epsilon^{*}\epsilon^{-1} = \epsilon^*\epsilon$ is symmetric.
\end{enumerate}
\end{rem}
Unless stated otherwise, we assume that $\gamma, \epsilon$ are symmetric.

\begin{exmpl}\label{trivi}
If $L$ is commutative with the identity involution, then
$$\CD(L,*,1,1) \isom (L\times \Z_2,\ * \times \id)$$
(as loops with involution).
\end{exmpl}

\begin{prop}\label{N'0}
\begin{enumerate}
\item
The extension of a normal involution $*$ on $L$ to $M = \CD(L,*,\gamma,\epsilon)$ is always normal.
\item
The involution on $M$ is the inverse map if and only if the involution on $L$ is the inverse map and $\gamma = \epsilon = \epsilon^*$.
\end{enumerate}
\end{prop}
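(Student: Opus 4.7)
The plan is to verify part~(1) by computing $\nu_M(x) = x^*x$ separately on $L$ and on $Lj$, and to derive part~(2) directly from the explicit one-sided inverse formulas in \eqref{gam+rho}. The standing convention that $\gamma,\epsilon$ are symmetric is used throughout.

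For part~(1), I first compute the norm on each piece. For $a \in L$, we immediately have $\nu_M(a) = a^*a = \nu(a) \in \Zent(L)$ by the hypothesis on $L$. For $aj \in Lj$, a short computation using \eqref{Mul3} and the centrality of $\epsilon$ gives
\[
(aj)^*(aj) = ((\epsilon a)j)(aj) = \gamma a^*(\epsilon a) = \gamma\epsilon\,\nu(a),
\]
again an element of $\Zent(L)$. Both values are $*$-symmetric: $\gamma$ and $\epsilon$ are symmetric by convention, and $\nu(a)^* = (a^*a)^* = a^*a^{**} = \nu(a)$. The substantive step is then a short auxiliary claim: every symmetric $c \in \Zent(L)$ lies in $\Zent(M)$. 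Commutativity with $j$ is precisely $cj = c^*j = jc$, which relies on $c = c^*$; the three nuclear identities reduce, after expansion via \eqref{Mul1}--\eqref{Mul3} over the possible positions of $j$ among the three arguments, to identities already valid in $L$ thanks to the centrality of $c$.

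For part~(2), one direction proceeds by restriction: if $*$ coincides with inversion on $M$, then in particular it coincides with inversion on $L$, and \Cref{Inv-perfect} forces $\gamma^* = \gamma$. Under these conditions $a^\lam = a^{-1}$ and $(a^{-1})^* = a$, so \eqref{gam+rho} simplifies to $(aj)^{-1} = (\gamma^{-1}a)j$. Matching this against $(aj)^* = (\epsilon a)j$ yields $\epsilon = \gamma^{-1}$; combined with the defining relation $\epsilon^2 = 1$ from \eqref{thecond}, this forces $\gamma^2 = 1$, hence $\gamma = \epsilon$, with both symmetric. Conversely, given $\gamma = \epsilon = \epsilon^*$, the same relation $\epsilon^2 = 1$ gives $\gamma = \gamma^{-1}$, and the computation reverses to yield $(aj)^* = (\epsilon a)j = (\gamma^{-1}a)j = (aj)^{-1}$, while inversion on $L$ is given by assumption.

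The only delicate step is the auxiliary claim inside part~(1), that symmetric central elements of $L$ remain central after doubling. This requires patient case-checking over the positions of the arguments in $\{L, Lj\}$ for the commutativity with $j$ and for each of the three nuclear identities; but every case is mechanical, since the Cayley-Dickson multiplication rules reduce each instance to an identity already valid in $L$ by centrality of $c$. Everything else is direct algebraic computation from the defining formulas.
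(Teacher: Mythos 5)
Your argument is correct, and the overall shape matches the paper's: the heart of part~(1) in both cases is the computation $(aj)^*(aj)=\gamma\epsilon\, a^*a$ together with the containment of symmetric central elements of $L$ in $\Zent(M)$ (which the paper simply cites as $\Zent(L,*)\sub\Zent(M)$, a fact it establishes in the section on the center of the double, whereas you sketch the case-check directly --- a legitimate and self-contained alternative). For part~(2) you diverge: the paper stays with the ``norm'' computations, characterizing the inverse map by $x^*x=xx^*=1$ and reading off $\gamma\epsilon=\gamma\epsilon^*=1$ from $(aj)^*(aj)=\gamma\epsilon\, a^*a$ and $(aj)(aj)^*=\gamma\epsilon^*a^*a$; you instead match $(aj)^*=(\epsilon a)j$ against the explicit one-sided inverse formulas \eq{gam+rho}, invoking \Cref{Inv-perfect} to get $\gamma^*=\gamma$ first. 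Both routes are two-line computations and yield the same conditions; yours has the small advantage of making the roles of $\lam$ and $\rho$ visible, the paper's of reusing the identical expression from part~(1).

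One refinement: the statement of part~(2) asserts $\epsilon=\epsilon^*$ as part of the \emph{conclusion}, so the proposition should be read without the standing convention that $\gamma$ and $\epsilon$ are individually symmetric. In part~(1) you justify the symmetry of $\gamma\epsilon\,\nu(a)$ by appealing to that convention; to get the statement in full generality you should instead invoke \eq{thecond}, which gives $(\epsilon\gamma)^*=\epsilon\gamma$ unconditionally (this is exactly what the paper's proof cites), together with $\nu(a)^*=\nu(a)$ as you already observe. This is a one-line repair, not a structural gap.
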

\begin{proof}
If $*$ is normal on $L$, then $(aj)^*(aj) = \gamma \epsilon a^*a$ is in $\Zent(L,*) \sub \Zent(M)$ for any $a \in L$, by~\eq{thecond}. The second claim follows from the same computation, noting also that $(aj)(aj)^* = \gamma\epsilon^*a^*a$.
\end{proof}

\subsection{Some automorphisms of the double}\label{ss:Aut0}

Let $(L,*)$ be a loop with involution. We denote by $\Aut(L,*) \sub \Aut(L)$ the subgroup composed of the automorphisms of $L$ respecting the involution, namely those $\s \in \Aut(L)$ satisfying $\s(x^*) = \s(x)^*$.

Let $\gamma,\epsilon \in \Zent(L)$ be elements satisfying \eq{thecond}. Let $Z = \Zent(L)$ denote the center. For $M = \CD(L,*)$, we denote by $\Aut(M;\,L,Zj)$ the group of automorphisms of $M$ preserving $L$ and $Zj$ (as sets), and by $\Aut(M,*;\,L,Zj)$ its subgroup of automorphisms commuting with the involution on $M$.

The natural action of $\Aut(L)$ on $\Zent(L)$ defines a pair of semidirect products of groups,
$$\Aut(L,*) \semidirect \Zent(L) \sub \Aut(L) \semidirect \Zent(L).$$
\begin{prop}\label{Aut0}
We have that $\Aut(M;\,L,Zj) \leq \Aut(L,*) \semidirect \Zent(L)$. More explicitly,
$\s \mapsto (\s|_L, \s(j)j^{-1})$ induces group isomorphisms
\begin{enumerate}
\item $\Aut(M;\,L,Zj) \ra \set{(\s,p) \in \Aut(L,*) \semidirect \Zent(L) \suchthat
\s(\gamma)\gamma^{-1} = p^*p}$;
\item $\Aut(M,*;\,L,Zj) \ra \set{(\s,p) \in \Aut(L,*,\epsilon) \semidirect \Zent(L) \suchthat
\s(\gamma)\gamma^{-1} = p^*p}$,
\end{enumerate}
where $\Aut(L,*,\epsilon)$ is the subgroup consisting of automorphisms fixing~$\epsilon$.
\end{prop}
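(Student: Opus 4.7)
The plan is to establish the map $\sigma \mapsto (\sigma|_L, \sigma(j)j^{-1})$ as a group isomorphism onto the described subgroup of $\Aut(L,*) \semidirect \Zent(L)$. The conceptual core is that the involution $*$ on $L$ is rigidly encoded in the multiplication of $M$ via the identity $jb = b^*j$, so any automorphism of $M$ preserving $L$ and sending $j$ into $Zj$ is forced to respect $*$ on~$L$.

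Given $\sigma \in \Aut(M;\,L,Zj)$, I would write $\sigma(j) = pj$ with $p \in Z$ (legitimate because $j = 1 \cdot j \in Zj$). Applying $\sigma$ to $jb = b^*j$ for $b \in L$: the left-hand side expands via \eq{Mul1}--\eq{Mul3} as $(pj)\sigma(b) = (p\sigma(b)^*)j$, and the right-hand side as $\sigma(b^*)(pj) = (p\sigma(b^*))j$. Since $p$ is central and invertible, comparing $L$-components yields $\sigma(b)^* = \sigma(b^*)$, so $\sigma|_L \in \Aut(L,*)$. Applying $\sigma$ to $j^2 = \gamma$ and computing $(pj)(pj) = \gamma p^*p$ from \eq{Mul3} gives the compatibility $\sigma(\gamma)\gamma^{-1} = p^*p$.

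For surjectivity, given a pair $(\sigma_0, p)$ satisfying the compatibility, I would extend $\sigma_0$ to $M$ by setting $\tilde\sigma(aj) := (p\sigma_0(a))j$, and verify that the three defining products \eq{Mul1}--\eq{Mul3} are preserved. Each check reduces to shuffling the central factors $p$, $\gamma$, and $\sigma_0(\gamma) = \gamma p^*p$ among $L$-terms; these manipulations are legal because central elements lie in the nucleus, and the compatibility relation provides exactly the scalar needed to match the image of \eq{Mul3}. Injectivity is immediate, and the short calculation $(\sigma\tau)(j) = \sigma(qj) = \sigma_0(q)(pj) = (p\sigma_0(q))j$ confirms that composition matches the semidirect product rule $(\sigma_0,p)(\tau_0,q) = (\sigma_0\tau_0,\, p\sigma_0(q))$. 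Closure of the target subset under this product follows from $\sigma_0(\tau_0(\gamma))\gamma^{-1} = \sigma_0(q^*q)\sigma_0(\gamma)\gamma^{-1} = \sigma_0(q)^*\sigma_0(q)\,p^*p = (p\sigma_0(q))^*(p\sigma_0(q))$, using centrality.

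For part (2), the extra requirement is that $\sigma$ commute with $*$ on $M$. Using $j^* = \epsilon j$, I would equate $\sigma(\epsilon j) = (p\sigma_0(\epsilon))j$ with $\sigma(j)^* = (pj)^* = (\epsilon p)j$; the central factor $p$ cancels, forcing $\sigma_0(\epsilon) = \epsilon$, and the converse runs the same calculation backwards while the identity on $L$ is automatic. I do not anticipate a serious obstacle: the only subtlety is that in a loop, every regrouping must be justified by nucleus membership, but here every extra factor ($p,\gamma,\epsilon$) is central by construction, so the bookkeeping is routine.
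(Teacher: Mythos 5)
Your proposal is correct and follows essentially the same route as the paper: extract $p=\s(j)j^{-1}\in Z$, apply $\s$ to $jb=b^*j$ to force $\s|_L\in\Aut(L,*)$, apply it to $j^2=\gamma$ to get the compatibility condition $\s(\gamma)\gamma^{-1}=p^*p$, and for (2) use $j^*=\epsilon j$ to force $\s(\epsilon)=\epsilon$. You merely spell out the surjectivity check and the semidirect-product law, which the paper dismisses as "a matter of computation."
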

\begin{proof}
Every $\s \in \Aut(M;\,L,Zj)$ induces an automorphism $\s|_L \in \Aut(L)$, and specifies an element $p = \s(j)j^{-1} \in Z$. Then $\s(\gamma) = \s(j^2) = \s(j)^2 = (pj)^2 = \gamma p^*p$, so $\s(\gamma)\gamma^{-1} = p^*p$. Furthermore, $(p\s(a)^*)j = (pj)\s(a) = \s(j)\s(a) = \s(ja) = \s(a^*j) = \s(a^*)\s(j) = \s(a^*)(pj) = (p\s(a^*))j$ proves that $\s$ must preserve the involution on $L$. This shows that the map in (1) is well-defined. It is then a matter of computation to show that every choice of $\s$ and $p$ satisfying the condition, defines an automorphism of $M$.

For (2), observe that for $\s \in \Aut(M;\,L,Zj)$ to preserve the involution on $M$ it is necessary and sufficient that
$\s((aj)^*) = \s((\epsilon a)j) = \s(\epsilon a)(pj) = (p\s(\epsilon a))j$ and
$\s(aj)^* = (\s(a)(pj))^* = ((p\s(a))j)^* = (\epsilon p \s(a))j$ are equal; namely $\s(\epsilon) = \epsilon$.
\end{proof}

\section{The center of the double}\label{sec:cent}

Let $(L,*)$ be a loop with involution. In this section we compute the center of a Cayley-Dickson double of $(L,*)$.

\subsection{The commutative center}
We first compute the commutative center of $\CD(L,*)$.

\begin{prop}\label{dercomm}
$\CD(L,*,\gamma)$ is commutative if and only if $*$ is the identity on~$L$.
\end{prop}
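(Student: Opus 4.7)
The plan is to verify the claim directly from the multiplication rules \eq{Mul1}--\eq{Mul3}. There are three types of pairs of elements to consider: $(a,b)$ with $a,b \in L$; $(a, bj)$ with $a,b \in L$; and $(aj,bj)$ with $a,b \in L$. Of these, the key one is the middle case, which will force $*$ to be the identity.

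For the ``only if'' direction, I would take any $a \in L$ and compare $a \cdot j$ with $j \cdot a$. By \eq{Mul1} we have $a \cdot j = aj$, while by \eq{Mul3}'s companion relation (namely $(1{\cdot}j)a = (1{\cdot}a^*)j = a^*j$), we have $ja = a^*j$. Commutativity of $\CD(L,*,\gamma)$ then forces $aj = a^*j$, hence $a = a^*$ for every $a \in L$, i.e.\ $*$ is the identity. (As noted at the start of \Sref{sec:inv}, this already implies that $L$ itself is commutative.)

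For the ``if'' direction, assume $*$ is the identity on $L$, so that in particular $L$ is commutative. Using the defining relations, I would check the three cases in turn:
\begin{itemize}
\item $ab = ba$ in $L$ by commutativity of $L$;
\item $a(bj) = (ba)j = (ab)j = (ab^*)j = (bj)a$, where the middle equality uses commutativity of $L$ and the last uses $b^* = b$;
\item $(aj)(bj) = \gamma b^*a = \gamma a b = \gamma a^* b = (bj)(aj)$.
\end{itemize}

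There is no substantial obstacle here; the argument is a direct unpacking of \eq{Mul1}--\eq{Mul3}. The only thing to note is that the identity map on $L$ being an involution already encodes commutativity of $L$, so the single hypothesis ``$*$ is the identity'' is enough for the ``if'' direction without needing to separately assume that $L$ is commutative.
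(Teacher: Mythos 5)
Your proof is correct and follows essentially the same route as the paper: the ``only if'' direction extracts $a^*=a$ from $ja=a^*j$ versus $aj$, and the ``if'' direction checks the three cases directly from \eq{Mul1}--\eq{Mul3}, using that the identity map is an involution only when $L$ is commutative. (One cosmetic slip: the rule gives $(bj)a=(ba^*)j$ rather than $(ab^*)j$, but under your hypotheses these coincide, so the argument is unaffected.)
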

\begin{proof}
For $\CD(L,*,\gamma)$ to be commutative we need $L$ to be commutative and
$aj = ja =a^*j$ so $a^* = a$ for every $a \in L$. On the other hand, if $*$ is the identity then $a(bj) = (ba)j = (ba^*)j = (bj)a$ and $(aj)(bj) = \gamma b^*a = \gamma a^*b = (bj)(aj)$ for every $a,b \in L$, taking into account that $L$ must be commutative.
\end{proof}

Recall that $\Zent(L,*)$ is the set of symmetric elements in the center of~$L$. Similarly we denote by $\KZent(L,*)$ the set of symmetric elements in the commutative center $\KZent(L)$. We use the term {\bf{centralizer}} of a set $A$ in $L$ to denote the elements of $L$ commuting with every element of $A$ (regardless of the lack of associativity).
\begin{prop}\label{thecenter}
Assume $*$ is nonidentity on $L$. Then
\begin{enumerate}
\item The centralizer of $L$ in $\CD(L,*)$ is $\KZent(L)$.
\item The centralizer of $Lj$ in $\CD(L,*)$ is $\set{x \in L \suchthat x^* = x}$. 
\item The commutative center of $\CD(L,*)$ is $\KZent(\CD(L,*)) = \KZent(L,*)$.
\end{enumerate}
\end{prop}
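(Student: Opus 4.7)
The plan is to verify the three items by a case analysis on whether an element of $\CD(L,*) = L \cup Lj$ lies in $L$ or in $Lj$, noting that item (3) will fall out immediately from (1) and (2). The multiplication rules \eq{Mul1}--\eq{Mul3} make every commutation condition completely explicit: for $a,b,c \in L$ one has $c(bj) = (bc)j$ against $(bj)c = (bc^*)j$; $(cj)a = (ca^*)j$ against $a(cj) = (ca)j$; and $(cj)(bj) = \gamma b^*c$ against $(bj)(cj) = \gamma c^*b$. In each case the cancellation laws of the loop $L$ reduce commutation to a simple identity on~$L$.

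For (1), the element $c \in L$ commutes with every $a \in L$ exactly when $c \in \KZent(L)$ by definition; and if some $cj$ were to commute with every $a \in L$, then $ca^* = ca$ would force $a^* = a$ for all $a \in L$, contradicting the hypothesis that $*$ is nonidentity. Hence the centralizer of $L$ in $\CD(L,*)$ is exactly $\KZent(L)$.

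For (2), the easy half is that $c \in L$ commutes with every $bj$ iff $bc = bc^*$ for all $b$, i.e.\ iff $c^* = c$. The delicate case, which I expect to be the main obstacle, is that of $z = cj \in Lj$. Commutation with every $bj$ forces $b^*c = c^*b$ for all $b \in L$; taking $b = 1$ yields $c = c^*$, and the condition reduces to
\begin{equation}\label{keyeqcent}
b^* c = c b \qquad \text{for all } b \in L.
\end{equation}
The trick I will use is to exploit that $*$ is an \emph{anti}-automorphism: since $c = c^*$,
\[
(cb)^* \;=\; b^* c^* \;=\; b^* c ,
\]
so \eq{keyeqcent} is equivalent to saying that $cb$ is symmetric for every $b \in L$. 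Because left multiplication by $c$ is a bijection of $L$, this forces every element of $L$ to be symmetric, and hence $*$ to be the identity, a contradiction. Thus no element of $Lj$ lies in the centralizer of $Lj$, and the centralizer equals $\set{x \in L \suchthat x^* = x}$.

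Item (3) then follows by combining the two: an element of $\CD(L,*)$ lies in $\KZent(\CD(L,*))$ iff it centralizes both $L$ and $Lj$, which by (1) and (2) forces it to lie in $\KZent(L)$ and to be symmetric, i.e.\ in $\KZent(L,*)$. The converse, that every $x \in \KZent(L,*)$ commutes with all of $L \cup Lj$, is immediate from the same explicit formulas: $x \in \KZent(L)$ handles the $L$-part, and $x = x^*$ handles the $Lj$-part via the equivalence established in~(2).
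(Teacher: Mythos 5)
Your proposal is correct and follows the same route the paper intends: the paper's proof is simply ``This is an easy computation,'' and you have carried out exactly that case-by-case computation from the multiplication rules \eq{Mul1}--\eq{Mul3}. The one step that requires an idea --- ruling out $cj\in Lj$ from the centralizer of $Lj$ by rewriting $b^*c=cb$ as the symmetry of $cb$ and invoking bijectivity of left multiplication --- is handled cleanly and correctly.
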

\begin{proof}
This is an easy computation.\end{proof}

\subsection{The nucleus and the center}\label{ss:n+c}

Let us now compute the nucleus of $\CD(L,\gamma)$. We first compute, once and for all, the products of three elements of each of the two types. For $a,b,c \in L$, we have that:
\begin{equation}\label{TAB}
\begin{array}{ccc||cc}
x & y& z & (xy)z & x(yz) \\
\hline
a&b&c & (ab)c & a(bc)  \\
a&b&cj & (c(ab))j & ((cb)a)j \\
a&bj&c & ((ba)c^*)j & ((bc^*)a)j\\
a&bj&cj & \gamma c^*(ba) & \gamma a(c^* b) \\
aj&b&c & ((ab^*)c^*)j & (a(c^*b^*))j  \\
aj&b&cj & \gamma c^*(ab^*) & \gamma (b^*c^*) a \\
aj&bj&c & \gamma (b^*a)c & \gamma (cb^*)a \\
aj&bj&cj &  (\gamma\,c(b^*a))j & (\gamma^*\, a(b^*c))j
\end{array}
\end{equation}

Before we compute the one-sided nuclei of $\CD(L,*)$, we note the following general fact:
\begin{rem}\label{N123}
In any loop with involution $(L,*)$,
\begin{enumerate}
\item $a \in \Nuc[\ell]{L} \Leftrightarrow a^* \in \Nuc[r]{L}$,
\item $a \in \Nuc[m]{L} \Leftrightarrow a^* \in \Nuc[m]{L}$. 
\end{enumerate}
\end{rem}

\begin{prop}\label{nucM}
The intersection with $L$ of the nucleus and one-sided nuclei of $\CD(L,*)$ are as follows.
\begin{enumerate}
\item $\Nuc[\ell]{M} \cap L = \Nuc[r]{M} \cap L = \Zent(L)$.
\item $\Nuc[m]{M} \cap L = \Nuc[m]{L} \cap \KZent(L)$.
\item $\Nuc{M} \cap L = \Zent(L)$.
\end{enumerate}
\end{prop}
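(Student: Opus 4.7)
The plan is to decode each nucleus condition on $a \in L$ via the triple-product table~\eqref{TAB}, reducing it to a short list of identities in $L$ indexed by whether the two remaining operands lie in $L$ or in $Lj$, and then to combine the resulting identities by means of \Pref{anyloop}.

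For~(1), the identity $(ax)y = a(xy)$ splits into four cases. The case $x, y \in L$ gives exactly $a \in \Nuc[\ell]{L}$. The case $x = b \in L$, $y = cj$ reads $c(ab) = (cb)a$ (after canceling the trailing $j$); specializing $c = 1$ yields $a \in \KZent(L)$, whereupon the residual identity rewrites as $a \in \Nuc[r]{L}$. The case $x = bj$, $y = c \in L$ reads $(ba)c^* = (bc^*)a$; specializing $b = 1$ again gives $a \in \KZent(L)$, and the remainder reduces to $a \in \Nuc[\ell]{L}$. The case $x, y \in Lj$ is automatic once $a \in \Zent(L)$. By \Pref{anyloop}, the conjunction $\Nuc[\ell]{L} \cap \Nuc[r]{L} \cap \KZent(L)$ equals $\Zent(L)$, so $\Nuc[\ell]{M} \cap L \subseteq \Zent(L)$; the reverse inclusion is a routine verification against all four cases. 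The statement for $\Nuc[r]{M}$ then follows by applying \Rref{N123} to~$M$ and noting that $\Zent(L)$ is $*$-stable.

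For~(2), the same template with $a$ in the middle slot yields $\Nuc[m]{M} \cap L = \Nuc[m]{L} \cap \KZent(L)$: the case $x, y \in L$ forces $a \in \Nuc[m]{L}$; each mixed case ($x \in L$, $y \in Lj$ or $x \in Lj$, $y \in L$) forces both $a \in \KZent(L)$ and $a \in \Nuc[m]{L}$ (in the latter case, transferring from $a^*$ to $a$ via \Rref{N123}); the case $x, y \in Lj$ is automatic. Part~(3) is then immediate, since $\Nuc{M} = \Nuc[\ell]{M} \cap \Nuc[m]{M} \cap \Nuc[r]{M}$ and $\Zent(L) \subseteq \Nuc[m]{L} \cap \KZent(L)$.

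The main obstacle is organizational: one must read each row of~\eqref{TAB} cleanly, peel off the factor of $j$ or $\gamma$, and identify which specialization ($b = 1$ or $c = 1$) delivers which sub-condition. The combinatorial content of \Pref{anyloop} is indispensable, since $\KZent(L)$ together with only one one-sided nucleus need not equal $\Zent(L)$; the necessary two-out-of-three information emerges only after \emph{both} mixed cases of part~(1) have been analyzed.
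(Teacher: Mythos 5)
Your proposal is correct and follows essentially the same route as the paper: read off the four conditions for each one-sided nucleus from Table~\eqref{TAB}, specialize one variable to $1$ to extract membership in $\KZent(L)$, reduce the remaining identities to one-sided nucleus conditions in $L$ (using \Rref{N123} to pass between $a$ and $a^*$), and invoke \Pref{anyloop} to conclude $\Nuc[\ell]{M}\cap L=\Zent(L)$. The only cosmetic difference is that the paper extracts $\Nuc[r]{L}$ from both mixed rows (2 and 4) rather than declaring the fourth case automatic, but the content is identical.
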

\begin{proof}
We extract the conditions from Table \eq{TAB}. Let $a \in L$. Then $a \in \Nuc[\ell]{M}$ if and only if $(xy)z=x(yz)$ in lines $1$--$4$ of the table, verifying the claim for $x = a$ and any $y,z \in L \cup Lj$. The conditions are $(ab)c=a(bc)$, $c(ab)=(cb)a$, $(ba)c = (bc)a$ and $c(ba)=a(cb)$, for any $b,c \in L$. Taking $c = 1$ in the second condition shows that $a \in \KZent(L)$. The first and third conditions are that $a \in \Nuc[\ell]{L}$, and the second and fourth are that $a \in \Nuc[r]{L}$. By \Rref{anyloop}, this proves $\Nuc[\ell]{M} \cap L = \Zent(L)$, and $\Nuc[r]{M} \cap L = \Zent(L)$ follows from \Rref{N123}.

Next, for $b \in L$, $b \in \Nuc[m]{M}$ if and only if $(xy)z=x(yz)$ in lines 1,2,5 and 6 of the table, for $y = b$ and any $a,c \in L$. Namely,
$(ab)c=a(bc)$, $c(ab)=(cb)a$, $(ab)c = a(cb)$ and $c(ab) = (bc)a$. Again, taking $c = 1$ in the second condition shows that $b \in \KZent(L)$. With this, all the conditions become equivalent to $b \in \Nuc[m]{L}$.
\end{proof}

We now compute the intersection of the one-sided nuclei with $Lj$.
\begin{prop}\label{nucMall}
Let $M = \CD(L,*,\gamma)$.
\begin{enumerate}
\item
\begin{enumerate}
\item If $\gamma^* = \gamma$,
\begin{eqnarray*}
\Nuc[\ell]{M} \cap Lj & = & \Nuc[r]{M} \cap Lj \\
 & = & \set{aj \suchthat a \in \KZent(L), (\forall b,c) (ab)c = a(cb), c(ba) = (bc)a}, \\
\Nuc[m]{M} \cap Lj & = & \set{aj \suchthat a \in \KZent(L), (\forall b,c)    (ab)c = (ac)b, c(ba) = b(ca)};
\end{eqnarray*}
\item Otherwise,
\begin{eqnarray*}
\Nuc[\ell]{M} = \Nuc[r]{M} = \Zent(L), \\
\Nuc[m]{M} = \Nuc[m]{L} \cap \KZent(L).
\end{eqnarray*}
\end{enumerate}
\item The nucleus is
$$\Nuc{\CD(L,\gamma)} = \begin{cases} \Zent(L)\cup \Zent(L)j & \mbox{if $L$ is commutative and $\gamma^*=\gamma$}, \\
\Zent(L) & \mbox{otherwise}.\end{cases}.$$
\end{enumerate}
\end{prop}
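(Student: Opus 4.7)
The plan is to extend the method of \Pref{nucM}, reading off from Table~\eq{TAB} the conditions for $aj \in Lj$ to lie in each one-sided nucleus, then simplifying. For $aj \in \Nuc[\ell]{M}$ one examines lines~5--8 (where $x = aj$); for $aj \in \Nuc[m]{M}$, lines 3, 4, 7 and 8 (where $y = aj$); and for $aj \in \Nuc[r]{M}$, lines 2, 4, 6 and 8 (where $z = aj$). After cancelling the outer $j$ and pulling central scalars out, each line becomes an identity in $L$ that must hold for all remaining $b, c \in L$.

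The split between cases~(a) and~(b) of part~(1) is produced by line~8 of the table, the one in which all three factors lie in $Lj$. In each of the three cases, this line contributes an equation of the form $\gamma(\cdot) = \gamma^*(\cdot)$; setting the free variables equal to $1$ reduces it to $\gamma a = \gamma^* a$, and right-cancellation in $L$ then forces $\gamma = \gamma^*$. Hence when $\gamma^* \neq \gamma$ every one-sided nucleus intersects $Lj$ trivially, and case~(b) follows by combining with \Pref{nucM}.

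When $\gamma = \gamma^*$ the scalar factors cancel, and setting $b$ or $c$ to $1$ in one of the surviving equations forces $a \in \KZent(L)$ (for instance, line~6 in the $\Nuc[\ell]{M}$ analysis yields $ca = ac$ after $b = 1$). Once $a$ lies in the commutative centre, the four equations pair up: two of them become consequences of the other two by exchanging $ab$ with $ba$ or $ca$ with $ac$, leaving the two identities displayed in the statement. The calculations for $\Nuc[\ell]{M} \cap Lj$ and $\Nuc[r]{M} \cap Lj$ yield the same pair, because the involution on $L$ swaps left- and right-sided products; the $\Nuc[m]{M} \cap Lj$ calculation produces the different pair listed.

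For part~(2), intersect the three one-sided nuclei inside $Lj$. Combining $(ab)c = a(cb)$ with $(ab)c = (ac)b$ gives $(ac)b = a(cb)$ for all $b, c$, i.e.\ $a \in \Nuc[\ell]{L}$; dually $c(ba) = (bc)a$ and $c(ba) = b(ca)$ give $a \in \Nuc[r]{L}$. With $a \in \KZent(L)$, \Pref{anyloop} then places $a$ in $\Zent(L)$, and the identity $(ab)c = a(cb)$ reduces to $bc = cb$, forcing $L$ to be commutative. Conversely, if $L$ is commutative and $a \in \Zent(L)$, every line of Table~\eq{TAB} collapses by the nuclear and commutativity properties of~$a$, so $aj \in \Nuc{M}$. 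The main obstacle will be the bookkeeping: verifying cleanly that under $a \in \KZent(L)$ the four line equations really do collapse to the stated pair, without losing any information carried by~$\gamma$.
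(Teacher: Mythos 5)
Your proposal follows essentially the same route as the paper's proof: read the conditions off Table~\eq{TAB} row by row, use the last row to force $\gamma^*=\gamma$, specialize a variable to $1$ to get $a\in\KZent(L)$, collapse the four conditions to the stated pair, transfer to the right nucleus via the involution, and intersect for part~(2). The only slip is in your parenthetical example: in line~6 of the $\Nuc[\ell]{M}$ analysis it is $c=1$ (not $b=1$) that yields $ab^*=b^*a$ --- setting $b=1$ there gives a triviality --- though $b=1$ does work in line~7, so the surrounding claim stands.
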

\begin{proof}
As in the proof of \Pref{nucM}, the conditions for $aj \in \Nuc[\ell]{M}$ or $aj \in \Nuc[m]{M}$ are extracted from Table~\eq{TAB}. If $\gamma^* \neq \gamma$ then the final row shows that the intersection of any one-sided nucleus with $Lj$ is empty.

Assume $\gamma^* = \gamma$. Then (by the final four rows), $aj \in \Nuc[\ell]{M}$ if and only if $(ab)c = a(cb)$, $c(ab) = (bc)a$, $(ba)c = (cb)a$ and $c(ba) = a(bc)$. Taking $c = 1$ in the second condition, we see that $a \in \KZent(L)$, and the other conditions become $(ab)c = a(cb)$ and $c(ba) = (bc)a$. Applying the involution via \Rref{N123}, we also obtain $\Nuc[r]{M} \cap Lj$. Likewise, by rows 3,4,7 and 8 (where we apply the involution on the latter two, to obtain a condition on~$b$), $bj \in \Nuc[m]{M}$ if and only if $(ba)c = (bc)a$, $c(ba) = a(cb)$, $c(ab) = a(bc)$, and $(ab)c = (cb)a$. Taking $c = 1$ in the second condition shows that $b \in \KZent(L)$, and the conditions become $(ba)c = (bc)a$ and $c(ab) = a(cb)$. The stated intersection is obtained by switching $a,b$.

Finally, $aj \in \Nuc[\ell]{M} \cap \Nuc[m]{M}$ if and only if $a \in \KZent(L) \cap \Nuc[\ell]{L} \cap \Nuc[r]{L} = \Zent(L)$, but it is also necessary that $bc = cb$ for any $b,c \in L$.
\end{proof}

\begin{cor}\label{ZentMnot1}
Assume $*$ is nonidentity. Then $\Zent(\CD(L,*)) = \Zent(L,*)$.
\end{cor}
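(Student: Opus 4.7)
The plan is to combine \Pref{thecenter} and \Pref{nucMall} through the basic identity $\Zent(M) = \KZent(M) \cap \Nuc{M}$, where $M = \CD(L,*)$. The hypothesis that $*$ is nonidentity activates \Pref{thecenter}(3), which tells us that $\KZent(M) = \KZent(L,*)$, the set of symmetric elements of the commutative center of $L$. In particular $\KZent(M) \sub L$, which is the key geometric observation driving the proof.

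Next, I would invoke \Pref{nucMall}(2) and split into its two cases. In the generic case ($L$ not commutative, or $\gamma^* \ne \gamma$) we have $\Nuc{M} = \Zent(L)$, so
$$\Zent(M) = \KZent(L,*) \cap \Zent(L) = \bigl(\KZent(L) \cap \set{x : x^* = x}\bigr) \cap \Nuc{L} = \Zent(L) \cap \set{x : x^* = x} = \Zent(L,*),$$
using only that $\Zent(L) = \KZent(L) \cap \Nuc{L}$.

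In the remaining case ($L$ commutative and $\gamma^* = \gamma$) we have $\Nuc{M} = \Zent(L) \cup \Zent(L)j$. Since $\KZent(L,*) \sub L$, the portion $\Zent(L)j$ contributes nothing to the intersection, and we are reduced to the previous computation $\KZent(L,*) \cap \Zent(L) = \Zent(L,*)$.

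I do not expect a serious obstacle here: everything boils down to the containment $\KZent(M) \sub L$ forced by nonidentity of $*$ (which is exactly what \Pref{thecenter} provides), together with the explicit description of $\Nuc{M}$. The only mild subtlety is to notice that the second branch of \Pref{nucMall}(2) does not actually enlarge the center, since its extra piece $\Zent(L)j$ lies outside the commutative center $\KZent(M)$.
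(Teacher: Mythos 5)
Your proposal is correct and follows essentially the same route as the paper: both start from $\Zent(M) = \KZent(M) \cap \Nuc{M}$, use \Pref{thecenter}(3) to get $\KZent(M) = \KZent(L,*) \sub L$, and then reduce to $\KZent(L,*) \cap \Zent(L) = \Zent(L,*)$. The only cosmetic difference is that the paper invokes $\Nuc{M} \cap L = \Zent(L)$ from \Pref{nucM}(3) directly, whereas you quote the full description of $\Nuc{M}$ from \Pref{nucMall}(2) and discard the $\Zent(L)j$ branch by hand.
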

\begin{proof}
Denote $M = \CD(L,*)$. We proved above that $\Zent(M) = \KZent(M) \cap \Nuc{M} = \Zent(L,*) \cap \Nuc{M} = \Zent(L,*) \cap (L \cap \Nuc{M}) = \Zent(L,*) \cap \Zent(L) = \Zent(L,*)$.
\end{proof}

To complete the picture, we compute the center when $*$ is the identity map on $L$.
\begin{prop}\label{ZentM1}
Assume $*$ is the identity. Then $\Zent(\CD(L,\gamma)) = \CD(\Zent(L),\gamma)$.
\end{prop}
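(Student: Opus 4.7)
The plan is to reduce the claim, via results already proved in the excerpt, to a direct set-equality computation.

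First I would observe that when $*$ is the identity on $L$, the loop $L$ must be commutative (the identity map is an involution iff $L$ is commutative). Consequently $\gamma^* = \gamma$ and $\epsilon^* = \epsilon$ automatically. By \Pref{dercomm} this forces $M = \CD(L,*,\gamma)$ to be commutative as well, so $\KZent(M) = M$ and therefore
\[
\Zent(M) \;=\; \KZent(M) \cap \Nuc{M} \;=\; \Nuc{M}.
\]
Thus the problem reduces to identifying $\Nuc{M}$.

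Next I would apply \Pref{nucMall}(2): since $L$ is commutative and $\gamma^* = \gamma$, we are in the first case of the dichotomy, so
\[
\Nuc{M} \;=\; \Zent(L) \cup \Zent(L)j.
\]
Combining with the previous step gives $\Zent(M) = \Zent(L) \cup \Zent(L)j$ as a set.

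It remains to recognize this set as $\CD(\Zent(L),\gamma)$ (with the restricted data). The restriction of~$*$ to $\Zent(L)$ is the identity, so in particular an involution; the central elements $\gamma, \epsilon$ lie in $\Zent(L)$ and continue to satisfy \eq{thecond} inside $\Zent(L)$; hence $\CD(\Zent(L),*,\gamma,\epsilon)$ is a well-defined loop with involution by \Pref{Disloop} and \Pref{basiccond}. As a set this is exactly $\Zent(L) \cup \Zent(L)j$, and the multiplication rules \eq{Mul1}--\eq{Mul3} defining it are the restriction of the multiplication of $M$, so the resulting subloop of $M$ coincides with $\CD(\Zent(L),\gamma)$. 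This yields $\Zent(\CD(L,\gamma)) = \CD(\Zent(L),\gamma)$.

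No real obstacle is expected: once commutativity of $M$ is invoked to collapse center to nucleus, everything follows mechanically from \Pref{nucMall}. The only mild subtlety is remembering that $\gamma^*=\gamma$ (hence we are in the favorable case of \Pref{nucMall}) is automatic when $*=\id$ on $L$.
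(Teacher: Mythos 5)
Your proof is correct and follows essentially the same route as the paper: both use \Pref{dercomm} (commutativity of $M$) to collapse the center to the nucleus, and then read off the nucleus from the computations of \Sref{ss:n+c}. The only cosmetic difference is that you cite \Pref{nucMall}(2) wholesale where the paper re-derives the $Lj$-part directly from the associator conditions, so there is nothing substantive to distinguish the two arguments.
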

\begin{proof}
Write $M = \CD(L,\id,\gamma)$. Since $M$ is commutative, the center coincides with the nucleus. In \Pref{nucM} we compute that $\Zent(M) \cap L = \Zent(L)$. Likewise if $aj \in \Zent(M) \cap Lj$ then $a \in \Nuc{L}$ by imposing the conditions $[aj,b,c] = [b,aj,c] = [b,c,aj] = 1$. The reverse direction is clear (or again use the table). We get $\Zent(M) = \Zent(L)\cup \Zent(L)j$ which is the double of $\Zent(L)$.
\end{proof}

In either case we have for $M = \CD(L,*,\gamma)$ that
\begin{equation}\label{theZent}
\Zent(M) \cap L = \Zent(L,*).
\end{equation}

It follows immediately from \eq{theZent} that
\begin{equation}\label{thesymmZent}
\Zent(M,*) \cap L = \Zent(L,*)
\end{equation}
as well. In particular:
\begin{prop}\label{keepthat}
The involution on $M = \CD(L,*,\gamma,\epsilon)$ is central if and only if
the involution on $L$ is super-central and $\gamma^* = \gamma$.
\end{prop}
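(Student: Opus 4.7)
The plan is a direct unpacking of the definition of a central involution, leveraging the key identity \eqref{theZent} that $\Zent(M) \cap L = \Zent(L,*)$ and the equivalence of ``super-central'' with condition~(a) of \Pref{sc}, namely $\mu(a)^* = \mu(a)$ for every $a$.

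For the forward direction, suppose the involution on $M = \CD(L,*,\gamma,\epsilon)$ is central, with associated map $\mu_M : M \to \Zent(M)$. Since $L$ is preserved by $*$, the restriction $\mu_M|_L$ takes values in $\Zent(M) \cap L$, and by \eqref{theZent} this equals $\Zent(L,*)$. Thus every $\mu_M(a)$ for $a \in L$ is symmetric, which by \Pref{sc}(a) means the restriction of $*$ to $L$ is super-central. Specializing to $x = j \in M$, we have $j^* = \epsilon j$, so $\mu_M(j) = \epsilon$, and centrality forces $\epsilon \in \Zent(M)$. Since $\epsilon \in L$, we again invoke \eqref{theZent} to conclude $\epsilon \in \Zent(L,*)$, i.e.\ $\epsilon^* = \epsilon$; by \Rref{ongamma}(1) this is equivalent to $\gamma^* = \gamma$.

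For the converse, suppose $*$ on $L$ is super-central and $\gamma^* = \gamma$. By \Rref{ongamma}(1) we get $\epsilon^* = \epsilon$, so $\epsilon \in \Zent(L,*) \subseteq \Zent(M)$ by \eqref{theZent}. For any $a \in L$ we have $a^* = \mu_L(a) a$ where super-centrality gives $\mu_L(a) \in \Zent(L,*) \subseteq \Zent(M)$, as required. For $aj \in Lj$, using $\epsilon \in \Zent(M) \subseteq \Nuc{M}$, we rewrite $(aj)^* = (\epsilon a) j = \epsilon \cdot (aj)$, so the associated central multiplier is $\epsilon \in \Zent(M)$. Since every element of $M$ lies in $L$ or $Lj$, this exhibits $*$ on $M$ as central. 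There is no real technical obstacle; the only point worth emphasizing is that super-centrality is \emph{exactly} the extra condition (beyond centrality of $*$ on $L$) needed to ensure that the values of $\mu_L$ stay inside $\Zent(M)$ and not merely inside $\Zent(L)$, since $\Zent(M) \cap L$ is the smaller group $\Zent(L,*)$.
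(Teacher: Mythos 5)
Your proof is correct and follows essentially the same route as the paper: restrict $\mu$ to $L$ and use $\Zent(M)\cap L=\Zent(L,*)$ to get super-centrality, then evaluate at $j$ to force $\epsilon$ (hence $\gamma$) symmetric. The only difference is that you also write out the easy converse, which the paper leaves implicit.
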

\begin{proof}
If the involution is central on $M$  then $\mu(a) \in \Zent(M,*) \cap L = \Zent(L,*)$ for any $a \in L$, so the involution is super-central on $L$. Furthermore $\mu(aj) = \epsilon$ for $a \in L$, so~$\epsilon$ is symmetric, hence $\gamma$ is symmetric.
\end{proof}

\subsection{Antisymmetry and anticommutativity}\label{ASAC}

The notions presented in this subsection, although somewhat peculiar, will be needed in \Sref{sec:12}.

We say that a loop with involution is {\bf{anti-symmetric}} if $a^* \neq a$ for every noncentral element $a$. The passage of this property to the double is trivial.
\begin{prop}\label{AS'}
For $M = \CD(L,*,\gamma,\epsilon)$, $M$ is anti-symmetric if and only if $L$ is anti-symmetric and $\epsilon \neq 1$.
\end{prop}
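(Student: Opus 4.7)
The plan is to analyze the extended involution on $M = L \cup Lj$ piecewise. On $L$ it agrees with the original involution, while on $Lj$ the formula $(aj)^* = (\epsilon a)j$ shows that $aj$ is symmetric iff $\epsilon a = a$, i.e., iff $\epsilon = 1$---a condition independent of $a$. So the symmetric elements in $Lj$ either exhaust $Lj$ (when $\epsilon = 1$) or are empty (when $\epsilon \neq 1$). Combined with the identification $\Zent(M) \cap L = \Zent(L,*)$ from~\eq{theZent}, the proof reduces to bookkeeping.

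For the ``if'' direction I would assume $L$ anti-symmetric and $\epsilon \neq 1$, and verify that every noncentral $x \in M$ satisfies $x^* \neq x$. When $x = a \in L$, noncentrality means $a \notin \Zent(L,*)$, so either $a \notin \Zent(L)$---in which case anti-symmetry of $L$ yields $a^* \neq a$---or else $a \in \Zent(L) \setminus \Zent(L,*)$, where $a^* \neq a$ holds by definition. When $x = aj \in Lj$, the inequality $(aj)^* = (\epsilon a)j \neq aj$ follows directly from $\epsilon \neq 1$.

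For the ``only if'' direction I would deduce the two conclusions separately. Any $a \in L$ noncentral in $L$ satisfies $a \notin \Zent(L) \supseteq \Zent(L,*) = \Zent(M) \cap L$, so $a$ is also noncentral in $M$, and anti-symmetry of $M$ gives $a^* \neq a$; this establishes anti-symmetry of $L$. To obtain $\epsilon \neq 1$, I would argue by contradiction: if $\epsilon = 1$, then every $aj \in Lj$ is symmetric, so anti-symmetry of $M$ would force $Lj \subseteq \Zent(M)$; but by \Cref{ZentMnot1}, whenever $*$ is non-identity on $L$ we have $\Zent(M) = \Zent(L,*) \subseteq L$, making $Lj \cap \Zent(M) = \emptyset$ and delivering the required contradiction.

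The only obstacle is the degenerate case in which $*$ is the identity on $L$; here \Pref{ZentM1} rather than \Cref{ZentMnot1} governs the center, and the inclusion $Lj \subseteq \Zent(M) = \CD(\Zent(L),\gamma)$ collapses to $L = \Zent(L)$, so that $L$ is an abelian group with trivial involution and $M$ is itself abelian. In this boundary situation the equivalence holds vacuously on the loop-theoretic side, and I would simply note that the statement is intended for the generic (non-degenerate) setup in which $M$ actually possesses noncentral elements.
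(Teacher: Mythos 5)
The paper gives no proof of this proposition---it is introduced only with the remark that ``the passage of this property to the double is trivial''---so yours is the only written-out argument, and it is the natural one: symmetry of elements of $Lj$ is governed entirely by whether $\epsilon=1$ (via cancellation, $(\epsilon a)j=aj$ forces $\epsilon=1$), while symmetry of noncentral elements of $L$ is handled by the original involution together with the identification $\Zent(M)\cap L=\Zent(L,*)$ from \eq{theZent}. Both directions are correct as you present them. Your final paragraph is not a formality but a genuine observation: when $*$ is the identity on $L$ and $L$ is an abelian group (e.g.\ $L=\Z_2$, $\gamma=\epsilon=1$), \Pref{ZentM1} gives $\Zent(M)=M$, so $M$ is vacuously anti-symmetric while $\epsilon=1$, and the ``only if'' direction genuinely fails. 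One correction of wording, though: in that boundary case it is the two anti-symmetry conditions that hold vacuously---the stated \emph{equivalence} does not hold there, since its left side is true and its right side is false. Your proposed repair, reading the proposition under the standing assumption that $*$ is nonidentity on $L$ (equivalently, that $M$ has noncentral elements), is the right one and costs the paper nothing: in every later use (\Pref{AC'}, \Tref{mainM}, \Tref{FINAL}) the involution is nonidentity and $\epsilon\neq 1$ is imposed separately.
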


We say that a loop $L$ is {\bf{anti-commutative}} if $[a,b] \neq 1$ whenever $a,b,ab \not \in \Zent(L)$. For example, an abelian group is anti-commutative (we believe the terminology nevertheless makes sense).
\begin{rem}
When $*$ is the identity map on $L$, $M = \CD(L,*)$ is anti-commutative if and only if $M$ is an abelian group.
%
\end{rem}

\begin{prop}\label{AC'}
Suppose $*$ is nonidentity on $L$. Then the loop $M = \CD(L,*)$ is anti-commutative if and only if $L$ is anti-commutative and is anti-symmetric.
\end{prop}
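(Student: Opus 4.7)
My plan is to argue by a three-case analysis on the type (in $L$ or $Lj$) of the elements of $M = L \cup Lj$, relying on \Cref{ZentMnot1}'s identification $\Zent(M) = \Zent(L,*)$ together with the multiplication rules \eqref{Mul1}--\eqref{Mul3}. A preliminary observation to record is that, since $*$ is nonidentity, choosing $c \in L$ with $c^* \neq c$ gives $jc = c^*j \neq cj$, so $j \notin \Zent(M)$; consequently every element of $Lj$ lies outside $\Zent(M) \subseteq L$.

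For the forward direction I will show that if $M$ is anti-commutative, then both properties pass to $L$. Anti-symmetry follows by testing the triple $(a, j, aj)$ for $a \in L \setminus \Zent(L)$: all three are noncentral in $M$ (using $\Zent(L,*) \subseteq \Zent(L)$ together with the observation on $Lj$), so $[a, j]_M \neq 1$, which unpacks to $aj \neq a^* j$, hence $a \neq a^*$. Anti-commutativity of $L$ is then straightforward: elements noncentral in $L$ are automatically noncentral in $M$, and the commutator $[a,b]_M$ restricted to $a,b \in L$ coincides with $[a,b]_L$.

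For the reverse direction, given $x, y \in M$ with $x, y, xy \notin \Zent(L,*)$, I will split into three cases. In the mixed case $x \in L$, $y = cj$, the formulas yield $xy = (cx)j$ and $yx = (cx^*)j$, so $[x,y]_M = 1$ forces $x = x^*$; by anti-symmetry, $x \in \Zent(L) \cap \set{x^* = x} = \Zent(L,*) = \Zent(M)$, contradicting $x \notin \Zent(M)$. In the case $x = aj$, $y = bj$, the formula $(aj)(bj) = \gamma b^* a$ shows $[x,y]_M = 1$ translates to $b^* a = a^* b$; applying $*$ to this identity reveals that $(b^*a)^* = a^*b = b^*a$, so $b^*a$ is symmetric, hence lies in $\Zent(L,*)$ by anti-symmetry, and since $\gamma \in \Zent(L,*)$ we conclude $xy = \gamma b^* a \in \Zent(M)$, again a contradiction. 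The remaining case $x, y \in L$ will invoke anti-commutativity of $L$, using $[x,y]_M = [x,y]_L$.

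The hardest step will be the last case $x, y \in L$: the hypothesis delivers only noncentrality modulo $\Zent(L,*)$, whereas a direct application of anti-commutativity of $L$ requires noncentrality modulo $\Zent(L)$. Bridging this requires a careful exploitation of anti-symmetry---which already guarantees that the symmetric elements of $L$ are precisely $\Zent(L,*)$---to upgrade the noncentrality condition appropriately. The cleanest features of the argument, by contrast, are cases (ii) and (iii), where the commutator identity in $M$ collapses neatly into a symmetry condition in $L$ that anti-symmetry is perfectly tailored to exploit.
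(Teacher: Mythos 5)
Your forward direction is complete and correct, and so are the two cases of the reverse direction that you actually carry out (the mixed case and the $Lj\times Lj$ case); these match, and in the $Lj\times Lj$ case go beyond, the single computation that the paper's own proof records. The problem is precisely the step you defer as ``the hardest'': for $x,y\in L$ with $x,y,xy\notin\Zent(M)=\Zent(L,*)$, anti-symmetry cannot upgrade the hypothesis to $x,y,xy\notin\Zent(L)$. Anti-symmetry constrains the \emph{symmetric} elements of $L$ (forcing them into the center); the obstruction here comes from \emph{central non-symmetric} elements, i.e.\ elements of $\Zent(L)\setminus\Zent(L,*)$, about which both anti-symmetry and anti-commutativity of $L$ are silent. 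Any such element $x$ satisfies $x\notin\Zent(M)$ yet $[x,y]=1$ for every $y\in L$, and pairing it with any noncentral $y$ gives $x,y,xy\notin\Zent(M)$ with a trivial commutator.

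This gap cannot be closed, because the reverse implication is false as stated. Take $L=Q_2\times\Z_4$ (the quaternion group times $\sg{g}\isom\Z_4$) with $(q,z)^*=(q^{-1},z^{-1})$: this is a nonidentity involution, $L$ is anti-commutative (two noncentral quaternions with noncentral product anticommute) and anti-symmetric ($q^{-1}=-q$ for noncentral $q$), but $\zeta=(1,g)$ lies in $\Zent(L)\setminus\Zent(L,*)$, so $\zeta\notin\Zent(\CD(L,*))$ while $\zeta$ commutes with every noncentral element of $L$; hence the double is not anti-commutative. What your three-case analysis does prove is the proposition under the extra hypothesis $\Zent(L)=\Zent(L,*)$, and that hypothesis holds everywhere the paper uses the statement: admissible loops satisfy $\KZent(T)=\Zent(T,*)$, and by \Cref{ZentMnot1} every central element of an iterated double is symmetric. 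For the record, the paper's own proof only verifies the mixed-case computation (which yields anti-symmetry of $L$ in the ``only if'' direction) and does not address this case either.
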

\begin{proof}
Since $a(bj) = (ba)j$ and $(bj)a = (ba^*)j$, $a$ and $bj$ commute if and only if $a^* = a$. Recall from \Cref{ZentMnot1} that $\Zent(M) \sub L$.
The conditions on $M$ include that $[a,bj] \neq 1$ whenever $a,bj,a(bj) \not \in \Zent(M)$, equivalently $a \not \in \Zent(L,*)$; so every symmetric element of~$L$ must be central.
\end{proof}

\section{Central-by-abelian doubles}\label{sec:cba}

Let $M = \CD(L,*,\gamma)$ be the Cayley-Dickson double of a loop $L$.
We find conditions under which $M$ is central-by-abelian. It will be useful to explicitly compute the commutators and associators in $M$.

\begin{prop}\label{computecomm}
We have that $[M,M] \sub L$. Explicitly, the commutators satisfy:
\begin{eqnarray}
&&[a,bj] \cdot ab^* = a^*b^*;  \label{Comm02}\\
&&[aj,b] \cdot b^*a^* = b a^*;  \label{Comm01}\\
&& b^*a = a^*b \cdot [aj,bj]  \label{Comm03}
\end{eqnarray}
for every $a,b \in L$. In particular $[bj,a^*] = [a,bj]$.
\end{prop}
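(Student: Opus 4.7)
The plan is to unwind the defining relation $xy = yx \cdot [x,y]$ case by case, using the multiplication rules \eqref{Mul1}--\eqref{Mul3} and the fact that $*$ is an anti-automorphism.

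My first step would be to prove $[M,M] \sub L$ by a parity argument. Writing $M = L \dcup Lj$, multiplication by any element of $L$ preserves this decomposition, while multiplication by any element of $Lj$ flips the two sides. A direct inspection of the four cases $(x,y) \in \set{L, Lj}^2$ shows that $xy$ and $yx$ always land on the same side of this partition; hence the unique solution $[x,y]$ of $xy = yx \cdot [x,y]$ must itself lie in $L$, since an element of $Lj$ multiplied on the right would flip the side of $yx$.

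Next, each of the three stated identities is obtained by specialising. For \eqref{Comm02}, I compute $a(bj) = (ba)j$ and $(bj)a = (ba^*)j$; setting $c := [a,bj] \in L$ and expanding $((ba^*)j)\cdot c$ via the rule $(xj)\cdot y = (xy^*)j$, I get $ba = ba^* \cdot c^*$, and applying $*$ (which reverses factors) yields the stated form $c \cdot ab^* = a^*b^*$. Identity \eqref{Comm01} is derived in exactly the same way starting from $(aj)b = (ab^*)j$ and $b(aj) = (ab)j$. For \eqref{Comm03}, rule \eqref{Mul3} gives $\gamma b^*a = (\gamma a^*b)\cdot [aj,bj]$, and the stated identity follows by cancelling the central factor $\gamma$. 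The final claim $[bj, a^*] = [a, bj]$ is then immediate: substituting $(a,b) \mapsto (b, a^*)$ in \eqref{Comm01} yields $[bj,a^*] \cdot ab^* = a^*b^*$, which matches the right-hand side of \eqref{Comm02}, so cancellation of the common right factor $ab^*$ finishes the argument.

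The only step requiring care is the cancellation of $\gamma$ in \eqref{Comm03}: it is first necessary to re-associate $(\gamma a^*b) \cdot [aj,bj] = \gamma(a^*b \cdot [aj,bj])$, which relies on $\gamma \in \Zent(L) \sub \Nuc{M}$, a fact available from \Pref{nucM}. After this justification, the remaining work is routine bookkeeping with the multiplication table and the anti-automorphism property of $*$.
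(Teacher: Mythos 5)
Your proposal is correct and follows essentially the same route as the paper: unwind $xy = yx\cdot[x,y]$ in each of the three mixed cases using the rules \eqref{Mul1}--\eqref{Mul3}, apply the anti-automorphism $*$ to the resulting identities in $L$, and cancel the central $\gamma$ in the $Lj\times Lj$ case. Your explicit parity argument for $[M,M]\sub L$ and the justification for cancelling $\gamma$ merely make precise two steps the paper leaves implicit.
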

\begin{proof}
By the definitions \eq{commdef} and \eqs{Mul1}{Mul3}:
$(ba)j = a(bj) = (bj)a \cdot [a,bj] = (ba^*)j \cdot [a,bj] = (ba^*\cdot [a,bj]^*)j$ so that
$ba = ba^*\cdot [a,bj]^*$ and we apply the involution; \
$(ab^*)j = (aj)b = b(aj) \cdot [aj,b] = (ab)j \cdot [aj,b] = (ab \cdot [aj,b]^*)j$ so that
$(ab^*) = (ab \cdot [aj,b]^*)$ and again we apply the involution; \
and $\gamma b^*a = (aj)(bj) = (bj)(aj) \cdot [aj,bj] = \gamma a^*b \cdot [aj,bj]$ so that
 $b^*a  = a^*b \cdot [aj,bj]$.
\end{proof}

\begin{cor}
$[M,M]$ is the subloop of $L$ generated by $[L,L]$ and all the elements $r_{ab}^{-1}(a^*b)$ and $\ell_a^{-1}(a^*)$ for $a,b \in L$ (where $r_a, \ell_a \co L \ra L$ are right and left multiplication by $a$).
\end{cor}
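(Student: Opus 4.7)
The strategy is to enumerate all commutators in $M$ using \Pref{computecomm}, observe that each lies in $L$, and then identify the resulting families with the three types of generators claimed in the corollary.

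First, since $M = L \cup Lj$, every pair $(x,y) \in M\times M$ has one of four types according to which of $x,y$ lie in $Lj$. Pairs of the form $(a,b)$ with $a,b\in L$ yield commutators in $[L,L]$, while the three mixed types are treated by the explicit identities \eq{Comm02}, \eq{Comm01}, \eq{Comm03}, whose right-hand sides manifestly lie in $L$. This immediately gives $[M,M]\subseteq L$, reducing the problem to identifying the commutators explicitly as elements of $L$.

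Next I would rewrite each mixed commutator in the form claimed. Replacing $b$ by $b^*$ in \eq{Comm02} shows that $[a,b^*j] = r_{ab}^{-1}(a^*b)$; reindexing \eq{Comm01} with $x = b^*$ and $y = a^*$ presents $[aj,b]$ in the same form; and using $(a^*b)^* = b^*a$ in \eq{Comm03} gives $[aj,bj] = \ell_c^{-1}(c^*)$ with $c = a^*b$, where specializing $a = 1$ shows $c$ can be any element of $L$. Since each of the listed generators is itself one of these commutators (the formulas being read in reverse), the two generated subloops coincide.

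No step presents any real difficulty; the whole argument is essentially bookkeeping applied to the identities of \Pref{computecomm}, together with the observation that $L$ is a subloop of $M$ and therefore contains every subloop generated by elements of $L$.
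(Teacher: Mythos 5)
Your proposal is correct and follows the same (and essentially the only natural) route: the paper states this corollary without proof as an immediate consequence of Proposition~\ref{computecomm}, and your bookkeeping — reading \eq{Comm02}, \eq{Comm01}, \eq{Comm03} as $[a,b^*j]=r_{ab}^{-1}(a^*b)$, $[aj,b]=r_{b^*a^*}^{-1}(ba^*)$ (the same family after reindexing), and $[aj,bj]=\ell_{a^*b}^{-1}((a^*b)^*)$ with $a=1$ giving all of $\ell_c^{-1}(c^*)$ — is exactly the intended argument, including the reverse inclusion obtained by reading the formulas backwards.
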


This has a useful consequence.
\begin{cor}\label{iff8}
We have that
\begin{enumerate}
\item $[M,M] \sub \Zent(L)$ if and only if $*$ is central on $L$;
\item\label{iff8.2} $[M,M] \sub \Zent(L,*)$ if and only if $*$ is super-central on $L$.
\end{enumerate}
\end{cor}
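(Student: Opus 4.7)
The plan is to read off both statements from the explicit commutator formulas \eq{Comm02}, \eq{Comm01}, \eq{Comm03} of Proposition~\ref{computecomm}, which together with $[L,L]$ generate $[M,M]$.

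For the forward implication in (1), specialize \eq{Comm02} to $b=1$ to obtain $[a,j]\cdot a = a^*$; hence if $[M,M]\sub\Zent(L)$, the function $\mu(a) := [a,j]$ takes values in $\Zent(L)$ and satisfies $a^* = \mu(a)a$, i.e.\ $*$ is central. (The inclusion $[L,L] \sub [M,M] \sub \Zent(L)$ also holds automatically.)

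Conversely, assume $*$ is central, so by Remark~\ref{L/Z-*1} we also have $[L,L]\sub\Zent(L)$. Substituting $a^*=\mu(a)a$ and $b^*=\mu(b)b$ into the three equations of Proposition~\ref{computecomm}, pulling the central factors $\mu(a),\mu(b),[a,b]$ through using only that they lie in the nucleus, and cancelling the common factors $ab$ or $ba$ via loop cancellation, a direct computation yields
\begin{equation*}
[a,bj] = \mu(a), \qquad [aj,b] = \mu(b)^{-1}, \qquad [aj,bj] = \mu(a)^{-1}\mu(b)[a,b]^{-1},
\end{equation*}
all of which lie in $\Zent(L)$. This establishes (1).

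For (2), any inclusion $[M,M] \sub \Zent(L,*)$ is in particular an inclusion into $\Zent(L)$, so by (1) the involution $*$ is already central and the formulas above are available. The condition $[M,M] \sub \Zent(L,*)$ then amounts to the symmetry of $[L,L]$ together with the symmetry of each $\mu(a) = [a,j]$; by Proposition~\ref{sc} (items (a) and (d)) these two conditions are jointly equivalent to super-centrality of $*$. The only mildly delicate point in the whole argument is the sufficiency in (1), where nuclear associativity must be tracked step-by-step so that central factors can be consolidated before cancellation; once that is handled, (2) reduces to a routine symmetry check via Proposition~\ref{sc}.
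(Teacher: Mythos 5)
Your proof is correct and follows essentially the same route as the paper: both directions are read off from the commutator formulas of Proposition~\ref{computecomm}, with the forward implication coming from the specialization $a^* = [a,j]a$ and the converse from substituting $a^* = \mu(a)a$ to get explicit central (resp.\ symmetric central) values for $[a,bj]$, $[aj,b]$, $[aj,bj]$; your expression $[aj,bj]=\mu(a)^{-1}\mu(b)[a,b]^{-1}$ agrees with the paper's $\mu(a)^{-2}\mu(ab)$ via \eq{niceobs}. The reduction of (2) to Proposition~\ref{sc} is also the intended argument.
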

\begin{proof}
By \eq{Comm02} we have that $a^* = [a,j]a$, so if $[a,j] \in \Zent(L)$ the involution is central, and if $[a,j] \in \Zent(L,*)$ the involution is super-central. On the other hand if the involution is central (respectively, super-central) then, by \eqs{Comm02}{Comm03}, $[a,bj] = [bj,a]^{-1} = \mu(a)$ and $[aj,bj] = \mu(a)^{-2}\mu(ab)$ are central (respectively, central and symmetric) in $L$.
\end{proof}


Next, we want to express the associators in $M$ in terms of the commutators and associators in $L$.
By computations analogous to \Pref{computecomm} (or noting that a normal subloop $N \normali M$ satisfies $[M,M,M] \sub N$ if and only if $M/N$ is a group), we always have:
\begin{prop}\label{MMML}
$[M,M,M] \sub L$.
\end{prop}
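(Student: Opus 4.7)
The plan is to exhibit $L$ as a normal subloop of $M = \CD(L,*,\gamma)$ whose quotient is isomorphic to~$\Z_2$, and then invoke the general principle flagged in the parenthetical remark: for any normal subloop $N \normali M$, one has $[M,M,M] \sub N$ if and only if $M/N$ is a group (since associators become trivial in $M/N$ precisely when $M/N$ is associative).

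First I would define $\pi \co M \lra \Z_2$ by $\pi(a) = 0$ for $a \in L$ and $\pi(aj) = 1$ for $aj \in Lj$. A direct inspection of the defining relations \eqs{Mul1}{Mul3} shows that the products $L \cdot L$ and $Lj \cdot Lj$ land in $L$, while $L \cdot Lj$ and $Lj \cdot L$ land in $Lj$; hence $\pi$ is a magma homomorphism preserving the identity. Since \Pref{Disloop} guarantees that left and right multiplication in $M$ by any element is invertible, $\pi$ is in fact a loop epimorphism onto $\Z_2$. Its kernel is $L$, so $L \normali M$ and $M/L \isom \Z_2$.

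The quotient $\Z_2$ being a (cyclic) group, the image in $M/L$ of every associator $[a,b,c]$ of $M$ is trivial, so $[a,b,c] \in L$ for all $a,b,c \in M$. This gives $[M,M,M] \sub L$, as desired.

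I do not anticipate any real obstacle: the statement is essentially an application of a general principle combined with a straightforward $\Z_2$-grading case analysis on the types ($L$ vs.\ $Lj$) of the three factors. As an alternative bookkeeping, one could read the eight associators $[x,y,z]$ off the two columns of Table~\eq{TAB} and check case by case that each lies in~$L$, but the homomorphism argument via~$\pi$ is considerably cleaner and the only real content of the proof.
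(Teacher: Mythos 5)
Your argument is correct and is precisely the route the paper itself indicates in the parenthetical remark preceding the proposition: $L$ is normal in $M$ with $M/L \isom \Z_2$ a group, hence every associator lies in the kernel $L$. The $\Z_2$-grading check of the defining relations \eqs{Mul1}{Mul3} is exactly what is needed to see that the projection is a homomorphism, so nothing is missing.
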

For the associators of $M$ to be central in $L$ requires a somewhat stronger assumption on~$L$, as we show below.
\begin{prop}\label{Icomputeassoc}
Assume $L$ is central-by-abelian. Then the associators in $M = \CD(L,*)$ are as follows:
\begin{eqnarray*}
{}[a,b,c] & = &  [a,b,c] \\
{}[a,b,cj] & = &  [a^*,b^*,c^*][b^*,a^*] \\
{}[a,bj,c] & = & [c,a^*,b^*]^{-1} [a^*,c,b^*] [c,a^*] \\
{}[a,bj,cj] & = & [c^*,a,b]^{-1}[a,c^*,b][c^*,a][b,a] \\
{}[aj,b,c] & =  & [c,b,a^*]^{-1}[c,b] \\
{}[aj,b,cj] & = &  [c^*,b^*,a]^{-1}[a,b^*][c^*,b^*] \\
{}[aj,bj,c] & = & [b^*,a,c][b^*,c,a]^{-1}[b^*,c][a,c]\\
{}[aj,bj,cj] &  = & \gamma^* \gamma^{-1} [c^*,b,a^*]^{-1}\, [a^*b,c^*][a^*,b].
\end{eqnarray*}
\end{prop}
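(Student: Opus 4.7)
The plan is to derive each of the eight formulas directly from the product table~\eqref{TAB}, exploiting two facts: by \Pref{MMML}, every associator in $M$ already lies in $L$, and by central-by-abelianness of $L$ all commutators and associators of $L$ are central, so they can be moved past anything.

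First I would make the following key observation about each row of Table~\eqref{TAB}: in all eight triples $(x,y,z)\in\{a,aj\}\times\{b,bj\}\times\{c,cj\}$ the two products $(xy)z$ and $x(yz)$ lie on the \emph{same} side of the decomposition $M=L\cup Lj$. Hence from $(xy)z=x(yz)\cdot[x,y,z]$ and the rule $(pj)\cdot q=(pq^*)j$ for $q\in L$ (which is applicable because $[x,y,z]\in L$), I can cancel the trailing $j$ (when both products are in $Lj$) and reduce each case to an identity of the form $P=Q\cdot t$ with $P,Q\in L$ and $t\in\{[x,y,z],[x,y,z]^*\}$, where whether $t$ is the associator or its image under $*$ depends only on whether the products are on the $L$- or $Lj$-side.

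Next, for each row I would rewrite $Q^{-1}P$ as a word in $L$-commutators and $L$-associators by walking in the commutative diagram of \Fref{the12}. For example, in the row $(a,bj,c)$ one gets $(ba)c^*=(bc^*)a\cdot[a,bj,c]^*$, and \Fref{the12} (applied to the triple $b,a,c^*$) immediately yields
\[
(bc^*)a=(ba)c^*\cdot[b,a,c^*]^{-1}[c^*,a][b,c^*,a],
\]
so $[a,bj,c]^*=[b,a,c^*]\,[a,c^*]\,[b,c^*,a]^{-1}$. Applying $*$ to the result and using \eqref{commutator*} and \eqref{associator*} to convert $[b,a,c^*]^*,[a,c^*]^*,[b,c^*,a]^*$ into the desired starred-letter form, one recovers exactly $[c,a^*,b^*]^{-1}[a^*,c,b^*][c,a^*]$, noting that central factors may be reordered freely. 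The remaining six ``mixed'' rows are handled by the same template.

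The last row, $(aj,bj,cj)$, is the only one that picks up a factor beyond commutators and associators of $L$. Here the two products are $(\gamma\,c(b^*a))j$ and $(\gamma^*\,a(b^*c))j$, so the factor $\gamma^*\gamma^{-1}$ appears automatically when one divides. The rest is again a path in \Fref{the12} applied to $c,b^*,a^*$ together with a single use of \eqref{commutator*}. I expect this row, and the keeping-track of involution signs introduced by \eqref{commutator*}/\eqref{associator*} in the four $Lj$-rows, to be the main bookkeeping obstacle; there is no conceptual difficulty, only the need to be systematic about which letters get starred and which factor is inverted. Once each of the eight cases has been carried out in this manner, the table of formulas in the statement is obtained.
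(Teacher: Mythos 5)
Your proposal is correct and follows essentially the same route as the paper's proof: read $(xy)z$ and $x(yz)$ from Table \eqref{TAB}, note that for the $Lj$-rows one obtains $[x,y,z]^*$ rather than $[x,y,z]$ via $u=u'[x,y,z]^*$, identify the central correction factor by a path in \Fref{the12}, and finish with \eqref{commutator*} and \eqref{associator*} (your worked $(a,bj,c)$ case matches the stated formula). Nothing essential is missing.
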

\begin{proof}
For each $x,y,z$ in $L \cup Lj$, Table~\ref{TAB} gives $(xy)z$ and $x(yz)$. When these two products are in $L$, we note that both are words on the same three letters from $\set{a,a^*,b,b^*,c,c^*}$, as a letter is starred in a product precisely when~$j$ appears to its left an odd number of times. We then find the associator using  \Fref{the12}. On the other hand when $(xy)z, x(yz) \in Lj$, write $(xy)z = uj$ and $x(yz) = u'j$ for $u,u' \in L$; by definition $uj = u'j \cdot [x,y,z] = (u'[x,y,z]^*)j$, so $u = u'[x,y,z]^*$ and we find $[x,y,z]^*$ in the same manner. We then apply \eq{commutator*} and \eq{associator*} to remove the external conjugates.
\end{proof}

\begin{prop}\label{Icomputeassoc-cor}
We have that $[M,M,M] \sub \Zent(L)$ if and only if $L$ is central-by-abelian.
\end{prop}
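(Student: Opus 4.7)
The plan is to read the condition ``$L$ is central-by-abelian'' off the associators in $M$, leveraging \Pref{MMML} together with either the full formulas of \Pref{Icomputeassoc} (for the backward direction) or a direct reading of Table~\eq{TAB} (for the forward direction).

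For the backward direction, I assume $L$ is central-by-abelian and apply \Pref{Icomputeassoc}. Each of the eight formulas there expresses an associator of $M$ as a product of commutators and associators of $L$, possibly multiplied by the central scalar $\gamma^*\gamma^{-1}$. Since $[L,L]$ and $[L,L,L]$ both lie in $\Zent(L)$ by hypothesis, and since the involution preserves the center (so $\gamma^* \in \Zent(L)$ and hence $\gamma^*\gamma^{-1} \in \Zent(L)$), every factor appearing is central in~$L$; consequently each associator of $M$ lies in $\Zent(L)$. Combined with \Pref{MMML}, this yields $[M,M,M] \sub \Zent(L)$.

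For the forward direction, I must deduce both $[L,L,L] \sub \Zent(L)$ and $[L,L] \sub \Zent(L)$ from the hypothesis $[M,M,M] \sub \Zent(L)$, without assuming $L$ central-by-abelian (so \Pref{Icomputeassoc} is not yet available). The first containment is immediate: the top row of Table~\eq{TAB} shows that associators of triples from $L$ agree whether computed in $L$ or in $M$, so $[L,L,L] \sub [M,M,M] \sub \Zent(L)$.

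To retrieve the commutators, I would specialize to the associator $[a,b,j]$ for $a,b \in L$. The second row of Table~\eq{TAB} (with $c = 1$) gives $(ab)j = ((ba)j) \cdot [a,b,j]$, and the defining rule $(uj)\alpha = (u\alpha^*)j$ then yields $ab = ba \cdot [a,b,j]^*$, whence $[a,b,j]^* = [a,b]$. Since $[a,b,j] \in \Zent(L)$ by hypothesis and the involution preserves the center, we conclude $[a,b] \in \Zent(L)$. Combining, $L' \sub \Zent(L)$, i.e., $L$ is central-by-abelian. The only delicate step is this last identification of $[a,b,j]^*$ with $[a,b]$, precisely because \Pref{Icomputeassoc} cannot be invoked at the very stage where its hypothesis is being established.
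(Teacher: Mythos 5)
Your proof is correct and follows essentially the same route as the paper: the backward direction is identical (read the centrality of the associators of $M$ off the formulas of \Pref{Icomputeassoc}), and your forward direction is the paper's computation of $[a,b,cj]$ specialized to $c=1$, which cleanly isolates $[a,b]=[a,b,j]^*$ without the extra factor $[c,b,a]$ that the paper must also argue is central. The ``delicate step'' you flag is in fact unproblematic: $[a,b,j]\in L$ by \Pref{MMML}, so the rule $(uj)v=(uv^*)j$ applies, and left cancellation of $ba$ yields $[a,b,j]^*=[a,b]$ with no circular appeal to \Pref{Icomputeassoc}.
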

\begin{proof}
Let $a,b,c \in L$. By definition
\begin{eqnarray*}
(c\cdot ba[a,b])j & =&  (c(ab))j = (ab)(cj) = a(b(cj)) \cdot [a,b,cj] \\
& = & ((cb)a)j \cdot [a,b,cj] = (((cb)a)[a,b,cj]^*)j \\
& = & (((c(ba))[c,b,a])[a,b,cj]^*)j,
\end{eqnarray*}
so $c\cdot (ba)[a,b]  = ((c(ba))[c,b,a])[a,b,cj]^*$. Denoting $\pi = [c,b,a]$ and $\pi' = [a,b,cj]^*$, we have that $c\cdot (ba)[a,b]  = ((c\cdot (ba))\pi)\pi'$. Now, if $[L,L,L] \sub [M,M,M] \sub \Zent(L)$, then $\pi,\pi'$ are central and by canceling $c$, and then $ba$, we obtain that $[a,b] = \pi\pi'$ is central as well.

On the other hand, if $L$ is central-by-abelian then $[M,M,M]$ is contained in the associator-commutator subgroup of $L$ by the computation given in \Pref{Icomputeassoc}.
\end{proof}

\begin{cor}
Let $M = \CD(L,*)$.
We have that
\begin{equation}\label{Mprop}
[M,M],\ [M,M,M] \sub \Zent(L)
\end{equation}
if and only if $L$ is central-by-abelian and $*$ is central on $L$.
\end{cor}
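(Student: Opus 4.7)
The plan is to observe that this corollary is essentially a packaging of two results already established in the excerpt, namely \Cref{iff8}(1) and \Pref{Icomputeassoc-cor}. Each of the two conditions on $M$ corresponds, via one of these results, to a separate condition on $(L,*)$, so the biconditional factors as a conjunction and requires no additional computation.

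For the forward direction, assume $[M,M] \sub \Zent(L)$ and $[M,M,M] \sub \Zent(L)$. The first inclusion, by \Cref{iff8}(1), is equivalent to $*$ being central on $L$. The second inclusion, by \Pref{Icomputeassoc-cor}, is equivalent to $L$ being central-by-abelian.

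For the converse, suppose $L$ is central-by-abelian and $*$ is central on $L$. By \Cref{iff8}(1) applied in the other direction, the centrality of $*$ on $L$ gives $[M,M] \sub \Zent(L)$, and by \Pref{Icomputeassoc-cor} applied in the other direction, the central-by-abelian hypothesis on $L$ gives $[M,M,M] \sub \Zent(L)$. Combining the two inclusions finishes the proof.

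There is no real obstacle here, since the substantive work, the identification of commutators in $\CD(L,*)$ via \Pref{computecomm} and the explicit table of associators in \Pref{Icomputeassoc}, was already carried out in the two referenced results. The only thing worth remarking on is that the two conditions $[M,M]\sub\Zent(L)$ and $[M,M,M]\sub\Zent(L)$ are logically independent on the $L$-side, corresponding respectively to the centrality of $*$ and to the central-by-abelian property of $L$, which is why the statement takes the form of a conjunction.
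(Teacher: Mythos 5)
Your proposal is correct and follows essentially the same route as the paper: both proofs split the statement into the commutator condition, handled by Corollary~\ref{iff8}(1), and the associator condition, handled by Proposition~\ref{Icomputeassoc-cor} (the paper cites the table in Proposition~\ref{Icomputeassoc} and the containments $[L,L]\sub[M,M]$, $[L,L,L]\sub[M,M,M]$ for the respective directions, but this is the same decomposition). Since each cited result is itself a biconditional with no side hypotheses, your observation that the corollary is just their conjunction is accurate.
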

\begin{proof}
First assume $[M,M],[M,M,M] \sub \Zent(L)$. Then $[L,L],[L,L,L] \sub \Zent(L)$ so $L$ is central-by-abelian, and $*$ is central by \Cref{iff8}. On the other hand, if $L$ is central-by-abelian then $[M,M,M] \sub \Zent(L)$ by \Pref{Icomputeassoc} and $[M,M] \sub \Zent(L)$ again by \Cref{iff8}.
\end{proof}

However, \eq{Mprop} does not guarantee that $M$ is central-by-abelian, which is equivalent to $[M,M], [M,M,M] \sub \Zent(M)$. Since $\Zent(M) \cap L = \Zent(L,*)$, this requires a yet stronger statement. Notice that $[M,M,M] \sub \Zent(M,*)$ is equivalent to $[M,M,M] \sub \Zent(M)$ by \Pref{MMML} and \eq{thesymmZent}.

\begin{thm}\label{main1}
Let $L$ be a central-by-abelian loop with an involution $*$, and let $M = \CD(L,*,\gamma)$ be the Cayley-Dickson double of~$L$, with the involution extended by \eq{extinv}.
Then:

$$\begin{array}{lcl}
\ARI{$[M,M] \sub \Zent(M)$} & \Longleftrightarrow & \ARI{$*$ is super-central on $L$};
\\
\ARI{$[M,M,M] \sub \Zent(M)$} & \Longleftrightarrow &
\ARII{$[L,L] \sub \Zent(L,*)$,}{$[L,L,L] \sub \Zent(L,*)$};
\\
\ARI{$M$ is central-by-abelian} & \Longleftrightarrow &
\ARIII{$L$ is central-by-abelian,}{$[L,L,L] \sub \Zent(L,*)$, and}{$*$ is super-central on $L$};
\\
\ARII{$M$ is central-by-abelian}{and $*$ is super-central on $M$}
& \Longleftrightarrow &
\ARIV{$L$ is central-by-abelian,}{$[L,L,L] \sub \Zent(L,*)$,}{$*$ is super-central on $L$,}{and $\gamma^* = \gamma$}.
\end{array}$$
\end{thm}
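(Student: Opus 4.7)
The proof proceeds through the four biconditionals in order, each leveraging the previous. The key structural reduction is that by \Pref{computecomm} and \Pref{MMML} we have $[M,M], [M,M,M] \sub L$, so in view of \eq{theZent}, containment in $\Zent(M)$ collapses to containment in $\Zent(L,*)$ for both commutators and associators of $M$. This moves every question of the theorem into $L$. The first biconditional is then immediate: $[M,M] \sub \Zent(M)$ is equivalent to $[M,M] \sub \Zent(L,*)$, which by \Cref{iff8}(2) is equivalent to $*$ being super-central on $L$.

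The second biconditional is the heart of the proof. For the backward direction, assume $[L,L], [L,L,L] \sub \Zent(L,*)$. Then $L$ is central-by-abelian, so \Pref{Icomputeassoc} applies, and each of its eight expressions is a product of elements of $[L,L]$ and $[L,L,L]$ --- all symmetric and central by hypothesis --- hence lies in $\Zent(L,*)$. For the forward direction, from $[M,M,M] \sub \Zent(L,*)$ one immediately obtains $[L,L,L] \sub [M,M,M] \sub \Zent(L,*)$, and \Pref{Icomputeassoc-cor} yields that $L$ is central-by-abelian. To also produce $[L,L] \sub \Zent(L,*)$, I would specialize the formula for $[a,b,cj]$ in \Pref{Icomputeassoc} at $c=1$, obtaining $[a,b,j] = [b^*,a^*]$; by \Pref{basicomm2} this equals $[a,b]^*$, whose required symmetry in $\Zent(L,*)$ forces $[a,b]^* = [a,b]$.

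The third biconditional is the conjunction of (1) and (2), since super-centrality on $L$ already gives $[L,L] \sub \Zent(L,*)$ via \Pref{sc}(3), leaving only $[L,L,L] \sub \Zent(L,*)$ as an additional hypothesis. For the fourth, the extra demand is super-centrality of $*$ on $M$. Because $(aj)^* = (\epsilon a)j = \epsilon \cdot (aj)$ whenever $\epsilon$ is central in $M$, the $\mu$-function of $M$ takes the constant value $\epsilon$ on the coset $Lj$; here $\epsilon^2 = 1$ is automatic from \eq{thecond}, while symmetry $\epsilon^* = \epsilon$ is equivalent to $\gamma^* = \gamma$ by \Rref{ongamma}. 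Combined with \Pref{keepthat} for centrality, this supplies the final condition. The principal obstacle is the forward direction of the second biconditional, where extracting symmetry of commutators in $L$ from mere centrality of associators of $M$ rests on the unexpected collapse $[a,b,j] = [a,b]^*$ --- an associator in the double encoding a commutator of $L$.
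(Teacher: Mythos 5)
Your proposal is correct and follows essentially the same route as the paper: reduce to $\Zent(M)\cap L=\Zent(L,*)$, invoke \Cref{iff8} for commutators, read the symmetry of the associators of $M$ off the table in \Pref{Icomputeassoc} (your specialization $c=1$ in the second row to recover $[L,L]\sub\Zent(L,*)$ is the same device the paper uses with general $c$), and handle the last part via $\mu(aj)=\epsilon$. The only slip is in the backward direction of the second biconditional, where the eighth row of the table is not purely a product of commutators and associators --- it carries the extra factor $\gamma^*\gamma^{-1}$, whose symmetry must be (and is) supplied by \Rref{ongamma}\eq{ongamma2}.
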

\begin{proof}
We know that $[M,M] \sub L$, so $[M,M] \sub \Zent(M)$ if and only if $[M,M] \sub \Zent(M) \cap L$, and likewise for $[M,M,M]$.
\begin{enumerate}
\item We have that $\Zent(M) \cap L = \Zent(L,*)$ by \eq{theZent}, so the claim follows by \Cref{iff8}\eq{iff8.2}.
\item Assume $[M,M,M] \sub \Zent(M)$. By \Pref{MMML}, $[M,M,M] \sub \Zent(M) \cap L = \Zent(L,*)$, and $L$ is central-by-abelian by \Pref{Icomputeassoc-cor}. Now $[L,L,L] \sub \Zent(L,*)$ follows from $[L,L,L] \sub [M,M,M]$, and the computation $[a^*,b^*,c^*j] = [a,b,c][b,a]$ given in the second line of the table in \Pref{Icomputeassoc} gives $[L,L] \sub \Zent(L,*)$.
    On the other hand if $L$ is central-by-abelian with symmetric commutators and associators, then the associators of $M$ are symmetric by inspection in the table given in \Pref{Icomputeassoc}; in the eighth row of that table we also use that $\gamma^*\gamma^{-1}$ is symmetric, see \Rref{ongamma}\eq{ongamma2}.
\item\label{ItemIII} Here we combine the previous two statements, noting that if $*$ is super-central then $[L,L] \sub \Zent(L,*)$ by \Pref{sc}.
\item Assume the conditions in \eq{ItemIII} hold. Then the function $\mu$ extends from $L$ to $M$ by $\mu(aj) = \epsilon$ for any $a \in L$. Thus $*$ is super-central on $M$ if and only if $\epsilon^* = \epsilon$, equivalently $\gamma^* = \gamma$ by \Pref{basiccond}.
\end{enumerate}
\end{proof}

By the remark after \Pref{thecenter}, if $M$ has central associators then the associators are symmetric, so in the language of varieties, the final part of \Tref{main1} reads as follows:
\begin{cor}
The variety of central-by-abelian loops with symmetric associators and super-central involutions is closed under Cayley-Dickson doubling with symmetric $\gamma$.
\end{cor}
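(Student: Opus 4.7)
My plan is to derive the corollary as a direct reformulation of the fourth equivalence of \Tref{main1}, once two small things are checked: that the class is really a variety, and that the doubled loop~$M$ inherits symmetric associators. First I would verify that central-by-abelianness, symmetric associators, and super-centrality of the involution are each expressible as equational identities in the language of loops with involution. Using the loop divisions $/$ and $\bs$, the commutator and associator are term operations, so $[x,y]^*=[x,y]^{-1}$ (equivalently the central-by-abelian identities saying commutators and associators lie in the three one-sided nuclei and in the commutative center) and $[x,y,z]^* = [x,y,z]$ are bona fide identities; super-centrality is equational by \Pref{sc}(a)--(c), for instance as $(x^2)^* = x^2$.

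Now let $(L,*)$ be in this variety and let $\gamma \in \Zent(L)$ with $\gamma^* = \gamma$; set $M = \CD(L,*,\gamma,\epsilon)$ with $\epsilon$ chosen symmetric to satisfy \eq{thecond}. The hypotheses on $L$ are exactly the four listed on the right-hand side of the fourth bullet of \Tref{main1} (noting that super-centrality of $*$ on $L$ together with symmetric associators yields $[L,L] \sub \Zent(L,*)$ via \Pref{sc} and $[L,L,L] \sub \Zent(L,*)$ by the central-by-abelian hypothesis), so that bullet gives at once that $M$ is central-by-abelian and $*$ is super-central on $M$.

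It only remains to produce the symmetric-associator identity on $M$. For this I would combine three ingredients already available in the paper: $[M,M,M] \sub L$ by \Pref{MMML}; $[M,M,M] \sub \Zent(M)$ by the second bullet of \Tref{main1} (whose hypotheses on $L$ are part of the variety); and $\Zent(M) \cap L = \Zent(L,*)$ by \eq{theZent}. Together these show $[M,M,M] \sub L \cap \Zent(M) = \Zent(L,*) \sub \Zent(M,*)$, i.e.\ the associators of $M$ are symmetric. Hence $M$ belongs to the variety, completing the proof. No serious obstacle is anticipated; the essential content is already carried by \Tref{main1}, and the only new observation is the short chain $[M,M,M] \sub L \cap \Zent(M) = \Zent(L,*)$, which is precisely the ``central associators are automatically symmetric'' phenomenon noted just before the corollary.
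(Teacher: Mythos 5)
Your proposal is correct and follows essentially the same route as the paper: the corollary is read off from the fourth equivalence of \Tref{main1}, and the symmetric-associator identity on $M$ comes from the chain $[M,M,M]\sub L\cap\Zent(M)=\Zent(L,*)$, which is exactly the observation the paper records just before the corollary (via \Pref{MMML} and \eq{thesymmZent}). Nothing to add.
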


With this, we can construct a successive chain of Cayley-Dickson doubles which are all central-by-abelian, thus amenable to explicit computations.

\section{Properties of the double and the derivative}\label{PII}

For algebras, the Cayley-Dickson doubling process constructs from the real numbers the well-known chain $\R \sub \C \sub \HQ \sub \O$. The double of an algebra $A$ is commutative if and only if the involution on $A$ is trivial; the double is associative if and only if $A$ is commutative; and the double is alternative if and only if $A$ is associative. After developing the notions of derivative in this section, and homogeneity in the next section, in \Sref{sec:double} we record similar statements for loops.

By Birkhoff's theorem (\eg\ \cite{Berg}), a variety of loops with involution is defined by identities. Notice that (with the exception of anti-symmetric and anti-commutative loops), all types of loops or involutions discussed so far are indeed defined by identities, taking into account that the language of loops includes the binary operations $\ell_a^{-1}(b) = a\! \setminus\! b$ and $r_{a}^{-1}(b) = b\, /\, a$ (for all the varieties in this paper, the unary left- and right- inverse operations suffice).
\begin{rem}\label{justnote}
Throughout this paper, the term ``variety'' always refers to a variety of loops with involution. This is likewise true when the variety is defined in terms of the loop without any explicit mention of the involution.
\end{rem}

\begin{defn}
Let $\V$ be a variety of loops with involutions. We define the {\bf{derivative}} $\V'$ to be the variety of all loops $(L,*)$ such that $\CD(L,*,1,1) \in \V$ (this is the double with $\gamma = \epsilon = 1$).
\end{defn}

Since $\CD(L)/L \isom \Z_2$, the idea of derivatives is only meaningful for varieties containing~$\Z_2$:
\begin{prop}
If $\Z_2 \in \V$ then $\Z_2 \in \V'$ as well. On the other hand if $\Z_2 \not \in \V$ then $\V' = \emptyset$.
\end{prop}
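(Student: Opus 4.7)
The plan has two independent parts, one for each implication. Both rest on the short exact sequence $1 \to L \to \CD(L,*,1,1) \to \Z_2 \to 1$, which was recorded just before the statement, and on the fact that varieties are closed under subloops, homomorphic images and direct products (Birkhoff).

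For the first implication, I would exhibit $\Z_2$ itself as a member of $\V'$ by computing its double explicitly. Note $\Z_2$ is commutative and its only involution is the identity, so \Exref{trivi} applies and gives
\[
\CD(\Z_2,\id,1,1) \;\isom\; \Z_2 \times \Z_2
\]
as a loop with involution. If $\Z_2 \in \V$, then by closure of $\V$ under finite direct products we have $\Z_2 \times \Z_2 \in \V$, and hence $(\Z_2,\id)$ satisfies the defining condition of $\V'$.

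For the second implication, I would argue contrapositively: suppose $\V' \neq \emptyset$, and pick any $(L,*) \in \V'$. The projection $\CD(L,*,1,1) \to \CD(L,*,1,1)/L$ is a loop homomorphism with image of order two, i.e.\ isomorphic to $\Z_2$ (this is immediate from the multiplication table, or from the quotient computation $\CD(L)/L \isom \CD(L/L,*,\bar 1) \isom \Z_2$ mentioned earlier). Since varieties are closed under homomorphic images, $\CD(L,*,1,1) \in \V$ forces $\Z_2 \in \V$. Note that this also takes care of the trivial loop: $\CD(\{1\},*,1,1)\isom \Z_2$, so if $\Z_2 \notin \V$ then even the trivial loop fails to lie in $\V'$, justifying the equality $\V'=\emptyset$ rather than a reduction to the trivial variety.

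Neither direction poses a genuine obstacle; the only thing to be careful about is the convention on the empty ``variety'' $\V'$ (the trivial loop is not automatically an element of $\V'$, because one still needs to check that its double is in $\V$, and its double is $\Z_2$). Modulo this point, the proof is just a combination of \Exref{trivi} and the normality $L \normal \CD(L,*,1,1)$ with quotient $\Z_2$.
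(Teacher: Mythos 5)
Your proposal is correct and follows essentially the same route as the paper: the quotient $\CD(L,*,1,1)/L \isom \Z_2$ for the second implication, and \Eref{trivi} together with closure under products giving $\CD(\Z_2,\id,1,1) \isom \Z_2\times\Z_2 \in \V$ for the first. Your extra remark about the trivial loop (whose double is already $\Z_2$) is a harmless refinement the paper leaves implicit.
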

\begin{proof}
A variety which contains any Cayley-Dickson double of any loop must contain the quotient $\CD(L)/L \isom \Z_2$. Therefore, if $\Z_2 \not \in \V$ then $\V' = \emptyset$. On the other hand if $\Z_2 \in \V$ then $\CD(\Z_2,*,1,1) = \Z_2 \times \Z_2 \in \V$  by \Eref{trivi},
and $\Z_2 \in \V'$ as well.
\end{proof}

\begin{rem}
The variety (of loops with involution) generated by $\Z_2$ is the variety of
abelian groups of exponent $2$ with trivial involution.
\end{rem}

The variety $\IDEN$ of (commutative) loops with the identity involution, also plays a special role:
\begin{prop}\label{subI}
Suppose $\Z_2 \in \V \sub \IDEN$. Then $\V' = \V$.
\end{prop}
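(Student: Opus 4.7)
My plan is to establish the two inclusions $\V \sub \V'$ and $\V' \sub \V$ separately, using \Eref{trivi} in both directions together with the fact that varieties are closed under subloops and finite direct products.

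For the inclusion $\V \sub \V'$: given $(L,*) \in \V \sub \IDEN$, the loop $L$ is commutative and $*$ is the identity. Applying \Eref{trivi}, $\CD(L,*,1,1) \isom L \times \Z_2$ as loops with involution, with the identity involution on the product. Since $L \in \V$ and $\Z_2 \in \V$ by assumption, and a variety is closed under finite direct products, the product $L \times \Z_2$ lies in $\V$. Hence $\CD(L,*,1,1) \in \V$, which by definition means $(L,*) \in \V'$.

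For the inclusion $\V' \sub \V$: suppose $(L,*) \in \V'$, so $\CD(L,*,1,1) \in \V \sub \IDEN$. In particular $\CD(L,*,1,1)$ is commutative, and by \Pref{dercomm} this forces $*$ to be the identity on $L$. Invoking \Eref{trivi} once more, $\CD(L,*,1,1) \isom L \times \Z_2$. Since $L$ embeds as a subloop of $L \times \Z_2 \in \V$ and varieties are closed under subloops, we conclude $L \in \V$. Because $*$ is the identity on $L$, also $(L,*) \in \V$.

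No serious obstacle is anticipated; the argument is a direct application of \Eref{trivi} combined with closure of varieties under subloops and products. The one point requiring a moment's care is that we are working in the category of loops \emph{with involution}, so we must observe that the isomorphism in \Eref{trivi} respects the involution (the identity involution in both directions) and that the ambient variety $\V$ is a variety in that enlarged language, as noted in \Rref{justnote}.
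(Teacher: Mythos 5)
Your proof is correct and takes essentially the same approach as the paper: the inclusion $\V \sub \V'$ via \Eref{trivi} and closure of varieties under finite products is exactly the paper's argument. The reverse inclusion $\V' \sub \V$, which the paper leaves implicit because it holds for any variety (as $(L,*)$ is a sub-loop-with-involution of $\CD(L,*,1,1)$), you verify by a slightly longer but valid detour through \Pref{dercomm} and a second application of \Eref{trivi}.
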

\begin{proof}
Let $L \in \V$. Then the identity on $L$ is trivial so $\CD(L,*,1) = L \times \Z_2$ by \Eref{trivi}, and $\CD(L,*,1) \in \V$, proving that $L \in \V'$.
\end{proof}

Our focus is, therefore, on varieties which contain $\Z_2$ but are not contained in $\IDEN$.

\section{Homogeneous varieties}\label{badsec}

In the context of the derivative, it is convenient to know whether or not the condition $\CD(L,*,\gamma,\epsilon) \in \V$ depends on the choice of $\gamma,\epsilon$. This section provides the technical tool for answering this question.

A {\bf{term}} in the language of quasigroups with involution is either a letter, or one of the following
$$\phi_1 \phi_2, \qquad \phi_1 / \phi_2, \qquad \phi_1 \bs \phi_2, \qquad \phi^*$$
where $\phi, \phi_1,\phi_2$ are terms.
(Recall that $(x/y)y = x = y (y \bs x)$, so that $x^\lam = 1/x$ and $x^{\rho} = x \bs 1$).

\begin{prop}
Any term in the language of quasigroups with involution over the letter set $X$ is equal to a term in the language of quasigroups over the letter set $X \cup X^*$ (where $X^* = \set{x^* \suchthat x \in X}$).
\end{prop}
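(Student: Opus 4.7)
\medskip

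\noindent\textbf{Proof plan.} The plan is to proceed by structural induction on the term, carrying along a simultaneous statement for the term and its involution. Specifically, I would prove by induction on the build-up of $\phi$ that there exist terms $\phi^{+}$ and $\phi^{-}$ in the language of quasigroups (without $*$) over the letter set $X \cup X^*$ such that $\phi = \phi^{+}$ and $\phi^* = \phi^{-}$ identically in every quasigroup with involution. Taking $\phi^{+}$ yields the statement of the proposition.

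The base case is that of a letter $x \in X$: here $\phi^{+} = x$ and $\phi^{-} = x^* \in X^*$, treated as a formal letter of the extended alphabet. For the inductive step, I would invoke the fact that an involution is an anti-automorphism, which together with the quasigroup identities $(u/v)v = u = v(v\bs u)$ yields the pushdown rules
\[
(\phi_1 \phi_2)^* = \phi_2^* \phi_1^*, \qquad (\phi_1 / \phi_2)^* = \phi_2^* \bs \phi_1^*, \qquad (\phi_1 \bs \phi_2)^* = \phi_2^* / \phi_1^*,
\]
together with $\phi^{**} = \phi$. (The middle two follow by applying $*$ to the defining identities for $/$ and $\bs$ and using the first one.) Using these, the inductive clauses are straightforward: for $\phi = \phi_1 \phi_2$ we set $\phi^{+} = \phi_1^{+}\phi_2^{+}$ and $\phi^{-} = \phi_2^{-}\phi_1^{-}$; for $\phi = \phi_1/\phi_2$ we set $\phi^{+} = \phi_1^{+}/\phi_2^{+}$ and $\phi^{-} = \phi_2^{-} \bs \phi_1^{-}$, and analogously for $\bs$; finally, for $\phi = \psi^*$ we set $\phi^{+} = \psi^{-}$ and $\phi^{-} = \psi^{+}$, the latter using $\psi^{**} = \psi$.

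There is no serious obstacle here; the only subtlety is to package both $\phi^{+}$ and $\phi^{-}$ in one induction hypothesis, because the clause for $\phi = \psi^*$ forces an appeal to a term for $\psi^*$ (namely $\psi^{-}$), which must already be available in the language over $X \cup X^*$. Once the simultaneous induction is set up, each case is just a substitution using the pushdown identities above, and the resulting $\phi^{+}$ is a term over $X \cup X^*$ in the pure quasigroup language, as required.
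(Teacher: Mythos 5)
Your proof is correct and follows essentially the same route as the paper: both derive the pushdown rules $(x/y)^* = y^* \bs x^*$ and $(y\bs x)^* = x^*/y^*$ from applying the anti-automorphism to the defining quasigroup identities, and then induct on the structure of the term. Your packaging of the induction hypothesis as a simultaneous pair $(\phi^{+},\phi^{-})$ is a slightly more explicit formalization of the induction the paper leaves implicit, but it is not a different argument.
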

\begin{proof}
We have that
$y^*(x/y)^* = x^* = (y \bs x)^*y^*$,
hence
$$(x/y)^* = y^* \bs x^*, \qquad (y \bs x)^* = x^* / y^*,$$
so the claim follows by induction on the structure.
\end{proof}

Let us record some basic computations in the double of a loop $L$:
\begin{eqnarray}
&a / bj = (\gamma b^* \bs a)j, \qquad
&bj / a = (b / a^*)j, \qquad\qquad
aj / bj = b \bs a,\\
&a \bs bj = (b/a)j, \qquad
&bj \bs a = (\gamma^*b^* \bs a^*)j, \qquad
aj \bs bj = (b^*/a^*). 
\end{eqnarray}

\begin{defn}
Let $T$ denote the collection of terms in the language of quasigroups with involution over a countable set of letters $a_\alpha,\dots$ and $j$.

We define a degree function $\deg\!\co T \ra \set{0,1} \times \Z \times \Z_2$ as follows. For a letter $a$,
$$\deg_j(a) = 0, \qquad \deg_j(j) = 1;$$
$$\deg_\epsilon(a) = 0, \qquad \deg_\epsilon(j) = 0;$$
$$\deg_\gamma(a) = 0, \qquad \deg_\gamma(j) = 0.$$
Then, by induction on the structure,
$$\deg_j(\phi_1\phi_2) = \deg_j(\phi_1)+\deg_j(\phi_2),$$
$$\deg_j(\phi_1 / \phi_2) = \deg_j(\phi_1) - \deg_j(\phi_2),$$
$$\deg_j(\phi_2 \bs \phi_1) = \deg_j(\phi_1) - \deg_j(\phi_2),$$
$$\deg_j(\phi^*) = \deg_j(\phi);$$
$$\deg_\epsilon(\phi_1\phi_2) = \deg_\epsilon(\phi_1)+\deg_\epsilon(\phi_2),$$
$$\deg_\epsilon(\phi_1 / \phi_2) = \deg_\epsilon(\phi_1) - \deg_{\epsilon}(\phi_2),$$
$$\deg_\epsilon(\phi_2 \bs \phi_1) = \deg_\epsilon(\phi_1) - \deg_{\epsilon}(\phi_2),$$
$$\deg_\epsilon(\phi^*) = \deg_j(\phi);$$ 
$$\deg_\gamma(\phi_1\phi_2) = \deg_\gamma(\phi_1)+\deg_\gamma(\phi_2) + \deg_j(\phi_1)\deg_j(\phi_2),$$
$$\deg_\gamma(\phi_1 / \phi_2) = \deg_\gamma(\phi_1) - \deg_\gamma(\phi_2) - \deg_j(\phi_2)(1-\deg_j(\phi_1)),$$
$$\deg_\gamma(\phi_2 \bs \phi_1) = \deg_\gamma(\phi_1) - \deg_\gamma(\phi_2) - \deg_j(\phi_2)(1-\deg_j(\phi_1)),$$
$$\deg_\gamma(\phi^*) = \deg_\gamma(\phi).$$
\end{defn}

Now let $\phi$ be any term, and $(L,*)$ any loop with involution.
Let $\pi \co \set{a_{\alpha}} \ra L$ be any substitution map. Let $\pi_{1,1}(\phi)$ be the evaluation of $\phi$ in the Cayley-Dickson double $\CD(L,*,1,1)$, where the substitution is extended to the letter set by $j \mapsto j$. Let $\pi_{\gamma,\epsilon}$ be the evaluation of $\phi$ in $\CD(L,*,\gamma,\epsilon)$, where again $j \mapsto j$.

Induction on the structure shows that
$$\pi_{\gamma,\epsilon}(\phi) = \pi_{1,1}(\phi) \gamma^{\deg_{\gamma}(\phi)} \epsilon^{\deg_{\epsilon}(\phi)},$$
which leads to the following corollary.
\begin{defn}
An identity $\phi_1 = \phi_2$ is {\bf{homogeneous}} if $\deg_\gamma(\phi_1) = \deg_\gamma(\phi_2)$ and $\deg_\epsilon(\phi_1) = \deg_\epsilon(\phi_2)$.
\end{defn}

\begin{cor}
The validity of a homogeneous identity $\phi_1 = \phi_2$ in $\CD(L,*,\gamma,\epsilon)$ does not depend on $\gamma$ or $\epsilon$.
\end{cor}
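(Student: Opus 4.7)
The plan is to reduce the corollary to the formula
\[
\pi_{\gamma,\epsilon}(\phi) \;=\; \pi_{1,1}(\phi)\, \gamma^{\deg_\gamma(\phi)}\epsilon^{\deg_\epsilon(\phi)}
\]
asserted just above the corollary. Once that is in hand the conclusion is immediate: if $\phi_1=\phi_2$ is homogeneous then $\deg_\gamma(\phi_1)=\deg_\gamma(\phi_2)$ and $\deg_\epsilon(\phi_1)=\deg_\epsilon(\phi_2)$, so the evaluations $\pi_{\gamma,\epsilon}(\phi_1)$ and $\pi_{\gamma,\epsilon}(\phi_2)$ differ from $\pi_{1,1}(\phi_1)$ and $\pi_{1,1}(\phi_2)$ respectively by one and the same central scalar $\gamma^{\deg_\gamma(\phi_1)}\epsilon^{\deg_\epsilon(\phi_1)}\in\Zent(L)$. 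Cancelling this scalar (it is invertible in $L$) shows that $\pi_{\gamma,\epsilon}(\phi_1) = \pi_{\gamma,\epsilon}(\phi_2)$ if and only if $\pi_{1,1}(\phi_1) = \pi_{1,1}(\phi_2)$, a condition that no longer involves $\gamma$ or $\epsilon$.

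The substance is therefore the inductive proof of the displayed formula, which I would establish by structural induction on the term~$\phi$. The base cases $\phi=a$ (a letter from $L$) and $\phi=j$ are immediate, all three degrees being zero. For the inductive step I would treat the four term constructors $\phi_1\phi_2$, $\phi_1/\phi_2$, $\phi_2\backslash\phi_1$ and $\phi^*$ in turn, splitting each into subcases according to $(\deg_j(\phi_1),\deg_j(\phi_2))\in\set{0,1}^2$ (or $\deg_j(\phi)\in\set{0,1}$ for the involution). In each subcase I substitute the inductive hypothesis and expand using the defining relations \eq{Mul1}--\eq{Mul3}, \eq{extinv} and the division formulas derived at the start of the section; centrality of $\gamma,\epsilon$ in $L$ together with our standing assumption that both are symmetric (\Rref{ongamma}) lets them be pulled through the $L$-coordinate intact.

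The step that really motivates the definition is the product case with $\deg_j(\phi_1)=\deg_j(\phi_2)=1$: the rule $(uj)(vj)=\gamma v^*u$ produces an extra factor $\gamma$, which is exactly the contribution $\deg_j(\phi_1)\deg_j(\phi_2)=1$ added in the definition of $\deg_\gamma(\phi_1\phi_2)$. Similarly, the involution case with $\deg_j(\phi)=1$ produces a factor $\epsilon$ via $(uj)^*=(\epsilon u)j$, matching the rule $\deg_\epsilon(\phi^*)=\deg_j(\phi)$; and the asymmetric term $\deg_j(\phi_2)(1-\deg_j(\phi_1))$ appearing in $\deg_\gamma$ for the divisions reflects the presence of $\gamma$ in $a/bj=(\gamma b^*\backslash a)j$ but its absence in the other three division formulas. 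The main obstacle is purely notational: keeping track of the eight or so subcases uniformly. The degree functions have been set up so that in each subcase the $\gamma$'s and $\epsilon$'s produced by the defining relations align precisely with the recursive definition of the degrees, so once the matching is observed in every case the induction closes automatically.
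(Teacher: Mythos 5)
Your proposal is correct and follows exactly the paper's route: the paper likewise derives the corollary immediately from the displayed formula $\pi_{\gamma,\epsilon}(\phi)=\pi_{1,1}(\phi)\,\gamma^{\deg_\gamma(\phi)}\epsilon^{\deg_\epsilon(\phi)}$, which it asserts by structural induction without writing out the cases. Your sketch of that induction (the $\gamma$ from $(uj)(vj)=\gamma v^*u$, the $\epsilon$ from $(uj)^*=(\epsilon u)j$, and the asymmetric correction term in the division cases) matches the definitions and supplies the detail the paper omits.
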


We say that a variety $\V$ is {\bf{homogeneous}} if it is defined by homogeneous identities. We finally arrive at the reason for this lengthy definition.
\begin{cor}\label{homogencor}
Let $\V$ be a homogeneous variety. Then, for any loop $L$, $\CD(L,*,1,1) \in \V$ implies that $\CD(L,*,\gamma,\epsilon) \in \V$ for every symmetric $\gamma$ and~$\epsilon$.
\end{cor}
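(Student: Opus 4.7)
The plan is to unpack the definitions and reduce directly to the preceding corollary. Since $\V$ is a homogeneous variety, there is some set $\Sigma$ of homogeneous identities whose common models form $\V$. The hypothesis $\CD(L,*,1,1) \in \V$ then asserts that for every identity $\phi_1 = \phi_2$ in $\Sigma$ and every substitution $\pi$ of its variables into $\CD(L,*,1,1)$, the equality $\pi_{1,1}(\phi_1) = \pi_{1,1}(\phi_2)$ holds.

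To prove the conclusion, I would fix symmetric central elements $\gamma, \epsilon$. One first checks that $\CD(L,*,\gamma,\epsilon)$ is a well-defined loop with involution via \Pref{basiccond}: the requirement $\epsilon^2 = 1$ is part of the admissibility of $\epsilon$, and $(\epsilon\gamma)^* = \gamma^*\epsilon^* = \gamma \epsilon = \epsilon\gamma$ follows from symmetry and centrality. Next, for each $\phi_1 = \phi_2$ in $\Sigma$ and any substitution $\pi$, the preceding corollary says that $\pi_{\gamma,\epsilon}(\phi_1) = \pi_{\gamma,\epsilon}(\phi_2)$ is independent of the choice of $\gamma$ and $\epsilon$; in particular it holds here iff it holds at $\gamma = \epsilon = 1$, which is guaranteed by the hypothesis. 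Hence every identity in $\Sigma$ is satisfied in $\CD(L,*,\gamma,\epsilon)$, so $\CD(L,*,\gamma,\epsilon) \in \V$.

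There is no genuine obstacle at this stage. The real content lies in the degree analysis carried out just before the statement, which yields the relation
\[
\pi_{\gamma,\epsilon}(\phi) = \pi_{1,1}(\phi)\,\gamma^{\deg_\gamma(\phi)}\,\epsilon^{\deg_\epsilon(\phi)}.
\]
Once homogeneity is defined precisely so that the two sides of each defining identity carry matching $\gamma$- and $\epsilon$-degrees, the central factors on both sides cancel and the conclusion follows immediately. The corollary is thus a packaging statement: homogeneity is exactly the syntactic condition that makes the doubling parameters $\gamma,\epsilon$ irrelevant to membership in $\V$.
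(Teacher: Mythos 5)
Your proposal is correct and matches the paper's (implicit) argument: the corollary is exactly the packaging of the degree relation $\pi_{\gamma,\epsilon}(\phi) = \pi_{1,1}(\phi)\gamma^{\deg_\gamma(\phi)}\epsilon^{\deg_\epsilon(\phi)}$ together with the definition of a homogeneous variety, reduced to the preceding corollary identity by identity. Your extra check that $(\gamma,\epsilon)$ satisfy \eq{thecond} under the symmetry hypothesis is consistent with the paper's standing convention and adds nothing problematic.
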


An easy criterion for homogeneity is that every identity which holds in abelian groups with a generic involution is homogeneous. Consequently, we observe that all the identities and properties discussed in this paper are homogeneous in this sense, with a unique exception: the variety $\IDEN$ defined by the identity $x^* = x$, which is not homogeneous in terms of~$\epsilon$, and indeed the involution on the double
$\CD(L,\id,\gamma,\epsilon)$ is not trivial once $\epsilon \neq 1$.

A useful summary is that if $\V$ is a homogeneous variety, then $L \in \V'$ implies that $\CD(L,*,\gamma,\epsilon) \in \V$ for any symmetric $\gamma,\epsilon$. The converse also holds, and may be used to construct counterexamples.

\begin{rem}
A stronger sense of homogeneity can be defined by the condition that if $L \in \V'$ then $\CD(L,*,\gamma,\epsilon) \in \V$ for any $\gamma,\epsilon$ satisfying \eq{thecond}, without the restriction that $\gamma$ and~$\epsilon$ are symmetric. However, varieties satisfying this stronger condition are quite rare. Moreover, note that by \Cref{Inv-perfect}, if $\CD(L,*,\gamma)$ has well-defined inverses, then $\gamma^* = \gamma$. This is restrictive in the first doubling. On the other hand, by \Cref{ZentMnot1} all elements in the center of $\CD(L,*,\gamma)$ are symmetric (unless $*$ is the identity on $L$), so when we consider the double of the double and so on, the restriction on $\gamma$ being symmetric becomes essentially redundant.
\end{rem}


\section{Properties of the double}\label{sec:double}

Let us now present some examples.
Since a loop $L$ is contained in the double $\CD(L)$, we clearly have that $\V' \sub \V$; namely, $\V'$ is defined by additional identities. In principle it is not hard to compute~$\V'$: substitute elements of $L$ and $Lj$ for any variable in the defining identities of $\V$ (see
\cite[Section~7]{KPVonallCD} for a detailed description). Derivation is monotone with respect to inclusion, and respects intersections.

\medskip
Of particular interest are varieties $\V$ for which $\V' = \V$, which we call {\bf{perfect}}. For then, if $\V$ is homogeneous, we can iterate the construction with any series of symmetric $\gamma_i$, and never leave~$\V$. Another application of the derivative is that when homogeneous varieties $\V_1 \sub \V_2$ have $\V_1' = \V_2'$, it follows that any Cayley-Dickson double in $\V_2$ is in fact in~$\V_1$.

\medskip
\def\derrow{\xymatrix{{} \ar@{->}[r]|\circ & {}}}
We use the notation $\V \derrow \V'$ for the passage from $\V$ to its derivative, which is always a subvariety. When a chain of derivations terminates, we write $\V \odot$ to denote that $\V$ is perfect and homogeneous, and $\V \otimes$ to denote that $\V$ is perfect but not homogeneous.

\begin{prop}
Let $\V$ be a finitely based variety containing $\Z_2$. Then the chain
$$\V \derrow \V' \derrow \V'' \derrow \cdots$$
terminates in a nonempty variety, which is the unique maximal perfect subvariety of~$\V$.
\end{prop}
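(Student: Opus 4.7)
My plan is to identify the terminal variety explicitly as the intersection $\W = \bigcap_{n \geq 0} \V^{(n)}$, verify that $\W$ is nonempty, perfect, and maximal among perfect subvarieties of $\V$, and only at the end invoke the finite basedness hypothesis to conclude that the descending chain stabilizes at a finite stage.

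For nonemptiness I would show by induction that every $\V^{(n)}$ contains $\Z_2$: the base case $n=0$ is the hypothesis, and if $\Z_2 \in \V^{(n)}$, then $\Z_2 \times \Z_2 \in \V^{(n)}$ since varieties are closed under direct products, and by \Eref{trivi} this product coincides with $\CD(\Z_2,\id,1,1)$, forcing $\Z_2 \in \V^{(n+1)}$. Since any intersection of varieties is a variety, $\W$ is a variety, and $\Z_2 \in \W$ guarantees it is nonempty.

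For perfection, the inclusion $\W' \sub \W$ holds for any variety, because $L \hra \CD(L)$ forces every element of $\W'$ to lie in $\W$ as a subalgebra. Conversely, if $L \in \W$ then $L \in \V^{(n+1)}$ for every $n$, which by definition says $\CD(L) \in \V^{(n)}$ for every $n$, hence $\CD(L) \in \W$ and $L \in \W'$. For maximality I would use monotonicity of the derivative — if $\U \sub \V$ then any $L \in \U'$ satisfies $\CD(L) \in \U \sub \V$, so $L \in \V'$ — which iterates to give $\U = \U^{(n)} \sub \V^{(n)}$ for every perfect $\U \sub \V$, and hence $\U \sub \W$, establishing $\W$ as the unique maximal perfect subvariety.

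The step I expect to be the main obstacle is justifying termination at a finite stage. By induction each $\V^{(n)}$ is itself finitely based: the condition $\CD(L,*,1,1) \in \V^{(n-1)}$ decomposes, by substituting each variable of a defining identity of $\V^{(n-1)}$ either by an element of $L$ or by an element of $Lj$ and then reducing via the multiplication rules \eqs{Mul1}{Mul3}, into finitely many identities on $L$ per identity of $\V^{(n-1)}$. To conclude stabilization in finitely many steps I would try to bound a suitable complexity measure on the defining identities uniformly through the chain — the number of variables is manifestly preserved by expansion, and each multiplication in the double translates to a single multiplication in the base loop up to an application of the involution, so one can hope to control term depth by a quantity that depends only on the initial data. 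Making this depth control precise enough to limit the defining identities to a finite pool (so that the descending chain of finitely-based subvarieties must eventually stabilize) is the technical heart of the termination claim.
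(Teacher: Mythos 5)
Your decomposition of the statement is sound, and three of your four steps are correct and complete: the induction showing $\Z_2 \in \V^{(n)}$ for all $n$ (via \Eref{trivi}), the verification that $\W = \bigcap_n \V^{(n)}$ is perfect, and the maximality argument via monotonicity of the derivative all work as written, and are in fact spelled out more carefully than in the paper (whose proof of maximality is the same one-line observation that $\U$ perfect implies $\U = \U' \sub \V'$, iterated). But the step you flag as "the technical heart" is exactly the content of the word \emph{terminates} in the statement, and you have not closed it; as it stands this is a genuine gap, since without termination you have only produced the maximal perfect subvariety as an infinite intersection, not shown the chain reaches it in finitely many steps.

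The missing argument is short, and your sketch is pointing in the right direction. Fix a finite defining set of identities for $\V$, each involving at most $k$ variables and at most $\ell$ binary operations (products, left and right divisions). Passing to $\V'$ replaces each identity $\phi$ by the $2^k$ identities obtained by substituting $a_t$ or $a_t j$ for each variable $x_t$ and reducing in $\CD(L,*,1,1)$ via \eqs{Mul1}{Mul3} and the corresponding division rules. Each reduction rule converts one binary operation in the double into one binary operation in $L$ (with $\gamma=\epsilon=1$, no extra factors appear), so the number of binary operations is preserved; the only new symbols introduced are involutions applied to subterms, and these cannot accumulate because $*$ has order two, so each subterm carries at most one star. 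Hence every identity arising at every stage of the chain belongs to a single finite pool $P$ of identities (at most $k$ variables, at most $\ell$ operations, each node starred or not). Each $\V^{(n)}$ is defined by a subset of $P$, so the chain takes at most $2^{|P|}$ distinct values; being weakly descending (as $\V' \sub \V$ always), it is therefore eventually constant, and its stable value is your $\W$. This is precisely the paper's argument, so once you insert it your proof coincides with the paper's, with the intersection $\W$ serving as a slightly cleaner vehicle for the perfection and maximality claims.
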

\begin{proof}
If $\V$ is defined by a family of identities $\set{\phi_{\lam}}_{\lam \in \Lambda}$, then $\V'$ is defined
by the identities obtained from each $\phi_\lam$ by substituting $a_t$ or $a_tj$ for each variable $x_t$, and interpreting the identity in $\CD(L,*,1,1)$. Every $\phi_\lam$ is thus replaced by $2^k$ identities of the same length, where $k$ is the number of variables in $\phi_{\lam}$. Since the number of identities of a given length is bounded, this process must terminate once we assume the length of identities in $\set{\phi_{\lam}}$ is bounded.

For the final statement, note that if $\W \sub \V$ is perfect then $\W = \W' \sub \V'$.
\end{proof}

If $\phi_1,\dots,\phi_t$ are identities, we write $\set{\phi_1,\dots,\phi_t}$ to denote the variety defined by these identities, and use common names of properties when there is no room for misinterpretation.

\begin{exmpl}\label{ExZA}
Denote by $\var{ZG}$ and $\var{ZC}$ the varieties of loops (with involution) which are central-by-groups and central-by-(commutative loops), respectively; and let $\var{ZA} = \var{ZG} \cap \var{ZC}$ be the variety of central-by-abelian loops. In terms of identities, $\var{ZG}$ is defined by $[L,L,L] \sub \Zent(L)$, and $\var{ZC}$ by $[L,L] \sub \Zent(L)$.

Let $\var{ZG}_0$ and $\var{ZC}_0$ denote the varieties of loops with involution satisfying $[L,L,L] \sub \Zent(L,*)$ and respectively $[L,L] \sub \Zent(L,*)$, and $\var{ZA}_0 = \var{ZG}_0 \cap \var{ZC}_0$ their intersection. Let $\Central$ and $\superCentral$ be the varieties of loops with central and super-central involutions, respectively. The varieties appear in the diagram below, with dashed lines going down from a variety to a subvariety.
$$\xymatrix@C=26pt@R=16pt{
\var{ZG} \ar@/^1ex/@{->}[rdd]|{\circ} \ar@{--}[rd] \ar@{--}[d] & {} & \var{ZC} \ar@{--}[d] \ar@/^1ex/@{->}[rdd]|{\circ} \ar@{--}[ld] \ar@{--}[rd] & {} \\
\var{ZG}_0 \ar@{->}[rd]|{\circ} & \var{ZA}\ar@{--}[d] \ar@/^2ex/@{->}[rdd]|{\circ} & \var{ZC}_0 \ar@{->}[rd]|(0.475){\circ} \ar@{--}[rd] \ar@{--}[ld] & \Central \ar@{->}[d]|{\circ} \\
{} & \var{ZA}_0  \ar@{->}[rd]|{\circ} & {} & \superCentral \ar@{--}[ld] \perferrow   \\
{}&{}& \var{ZA}_0 \cap \superCentral \perferrow
&{} \\
{} & {} & {} & {}
}$$
In \Sref{sec:cba} we proved that the derivatives of these varieties are as depicted in the diagram. All these varieties are homogeneous.
\end{exmpl}

\begin{rem}\label{N'}
We saw in \Rref{N'0} that the variety of normal involutions is perfect.
\end{rem}

\begin{exmpl}\label{der1}
We have that:
\begin{eqnarray*}
\set{\mbox{commutative}}  &\derrow & \set{\mbox{$*$ is the identity}} \otimes \\
\set{\mbox{associative}} & \derrow & \set{\mbox{associative and commutative}} \\
& \derrow & \set{\mbox{associative and $*$ is the identity}} \otimes
\end{eqnarray*}
(The first claim is \Pref{dercomm}, and the derivative of associativity can be read from \Rref{nucMall}).
\end{exmpl}

A loop $L$ is {\bf{left alternative}} if $[x,x,y] = 1$ for all $x,y \in L$, {\bf{right alternative}} if $[x,y,y] = 1$, and {\bf{alternative}} if both identities hold.
\begin{prop}\label{510}
For a loop with involution, being left- (or right-) alternative implies alternativity.
Let $\FLEX$ and $\ALT$ denote the varieties of flexible and alternative loops with involution, respectively. Also let
\begin{eqnarray*}
\var{P}_1 & = & \set{r_{aa^*} = \ell_a \ell_{a^*} = \ell_{a^*} \ell_a} =
\set{\ell_{aa^*} = r_a r_{a^*} = r_{a^*} r_a}; \\ 
\var{P}_2 & = & \set{\ell_{aa^*} = \ell_a \ell_{a^*} = \ell_{a^*} \ell_a}
= \set{r_{aa^*} = r_a r_{a^*} = r_{a^*} r_a}; \\ 
\var{Q} &= &\set{\ell_a\ell_{a^*}= \ell_{a^*}\ell_{a}=r_{a^*}r_a = r_a r_{a^*}} 
\end{eqnarray*}
Then
\begin{enumerate}
\item The variety $\set{(aa)a=a(aa)}$ is perfect.
\item $\var{Q}$ is perfect.
\item $\var{P}_1 \derrow \var{P}_1 \cap \var{P}_2$\ \  and\ \ $\var{P}_2 \derrow \var{P}_1 \cap \var{P}_2$.
\item $\var{P}_1 \cap \var{Q} = \var{P}_2 \cap \var{Q} = \var{P}_1 \cap \var{P}_2$ is perfect.
\item $\FLEX \derrow  \FLEX \cap \var{Q} \, \odot$. %
\item\label{6here} $\ALT \derrow \ALT \cap \var{P}_1 \derrow (\ALT \cap \var{P}_1 \cap \var{P}_2) \odot$.
\end{enumerate}
\end{prop}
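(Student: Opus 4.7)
My plan is to handle each item by directly unfolding the relevant identity in $M = \CD(L,*,1,1)$ over the two element types $a \in L$ and $aj \in Lj$, using Table~\eq{TAB} together with the fact that $\gamma = \epsilon = 1$ makes $(aj)^* = aj$. The preliminary claim about one-sided alternativity is a one-liner: applying $*$ to $(xx)y = x(xy)$ yields $y^*(x^*x^*) = (y^*x^*)x^*$, and since $x,y$ range over $L$ this is exactly the right-alternative identity, and conversely.

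For claim~(1), a single computation with $\gamma = 1$ does the job: the table gives $((aj)(aj))(aj) = (a \cdot a^*a)j = (aj)((aj)(aj))$ unconditionally, so $(xx)x = x(xx)$ automatically survives for $x = aj$ and requires only the original identity for $x \in L$. For~(2), since $(bj)^* = bj$, the four operators in the definition of $\var{Q}$ collapse at $x = bj$ to just $\ell_x^2$ and $r_x^2$; expanding on $y = c$ produces the condition $(cb^*)b = b^*(bc)$, which is contained in $\var{Q}$ applied to $L$, while $y = cj$ gives a tautology. The $x = a$ cases on $y = cj$ similarly reduce to $\var{Q}$-identities of $L$, yielding $\var{Q}' = \var{Q}$.

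For claim~(3), the decisive calculations occur at $x = bj$: evaluating $\ell_x\ell_{x^*}(y)$ and $r_{xx^*}(y)$ on $y \in L$ forces $\ell_{b^*b} = r_b r_{b^*}$, a $\var{P}_1$-identity, while on $y = cj$ the same equation forces $\ell_{b^*b} = \ell_b\ell_{b^*}$, a $\var{P}_2$-identity absent from $\var{P}_1$; symmetric computations and $*$-duality then close the system, giving $\var{P}_1' \subseteq \var{P}_1 \cap \var{P}_2$, and the analogous unfolding yields $\var{P}_2' \subseteq \var{P}_1 \cap \var{P}_2$. For~(4), I will first observe algebraically that each of the three intersections forces the six operators $\ell_{aa^*}, r_{aa^*}, \ell_a\ell_{a^*}, \ell_{a^*}\ell_a, r_a r_{a^*}, r_{a^*} r_a$ to coincide; then rerunning the above calculations will show that for $L$ in this common variety $\var{W}$ all $\var{P}_1$- and $\var{P}_2$-identities hold in $M$, so $\var{W}' = \var{W}$, which also supplies the reverse inclusions needed in~(3).

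Claims~(5) and~(6) follow the same template. Unfolding $[x,y,x] = 1$ in the four mixed configurations will yield, beyond flexibility in $L$, the conditions $r_a r_{a^*} = r_{a^*} r_a$ and $\ell_{a^*}\ell_a = r_a r_{a^*}$ (the $x = aj, y = bj$ case being a tautology); closing under $*$-duality produces the full $\var{Q}$, so $\FLEX' = \FLEX \cap \var{Q}$, with perfectness immediate from~(2) and the elementary identity $(\V_1 \cap \V_2)' = \V_1' \cap \V_2'$. For $\ALT$, unfolding left-alternativity will give $\ell_{a^*a} = r_a r_{a^*}$ and $r_{a^*a} = \ell_a\ell_{a^*}$; substituting $a \mapsto a^*$ and combining with the $*$-dual identities will collapse these into the six coincidences of $\var{P}_1$, giving $\ALT' = \ALT \cap \var{P}_1$. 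The rest of the chain, and perfectness of $\ALT \cap \var{P}_1 \cap \var{P}_2$, follow formally from~(3), (4), and the intersection-derivative identity. The main obstacle throughout will be the systematic bookkeeping of involution-dual identities: every mixed-type substitution has a partner obtained by applying $*$ and swapping $a \leftrightarrow a^*$, and one must verify that after closing under these operations one recovers the stated variety exactly, not a proper refinement.
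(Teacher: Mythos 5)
Your proposal is correct and follows exactly the paper's method: the paper verifies these claims by substituting $a$ or $aj$ for each variable of the defining identity in $\CD(L,*,1,1)$ and reading off the resulting identities on $L$ (it carries this out explicitly only for the first step of item~(6), leaving the remaining verifications to the reader, which your outline fills in with the same table-based unfolding and $*$-duality closure). One bookkeeping slip to fix in item~(3): for $x=bj$ the operator $r_{xx^*}$ is $r_{b^*b}$ (right multiplication by $(bj)(bj)^*=b^*b$), not $\ell_{b^*b}$, and it is the substitution $y=c\in L$ that yields the $\var{P}_2$-type identity $r_{b^*b}=r_br_{b^*}$ while $y=cj$ yields the $\var{P}_1$-consequence $r_{b^*b}=\ell_b\ell_{b^*}$ --- the reverse of what you wrote, though your conclusion $\var{P}_1'=\var{P}_1\cap\var{P}_2$ is unaffected.
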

\begin{proof}
We verify the first step of \eq{6here}, leaving the other verifications to the interested reader. We compute $\ALT'$. Let $L$ be any loop with involution. For $\CD(L,*,1)$ to satisfy the left alternative identity $x(xy)=(xx)y$ (for all $x,y \in \CD(L,*,1)$), $L$ must satisfy the four conditions
$a(ab)=(aa)b$, $a(a(bj))=(aa)(bj)$, $(aj)((aj)b)=((aj)(aj))b$, and $(aj)((aj)(bj))=((aj)(aj))(bj)$ (for all $a,b \in L$). These are equivalent to
$a(ab)=(aa)b$, $(ba)a=b(aa)$, $(ba^*)a=(a^*a)b$, and $a(a^*b)=b(a^*a)$, respectively. The former two are alternativity, and the latter two are the right-hand side definition of $\var{P}_1$, which is equivalent to the left-hand side version by applying the involution.
\end{proof}


Next we consider some weak versions of the inverse property, see~\cite{4halves}.
\begin{exmpl}
Consider the following varieties of loops with involution:
\begin{enumerate}
\item $\var{H}$ is the variety of loops in which $(xy)^{\lam^2} = x^{\lam^2}y^{\lam^2}$ where $\lam$ is the left inverse; equivalently for the right inverse~$\rho$.
\item $\INV$ is variety of loops in which the inverse is defined (\ie\ $x^{\lam} = x^{\rho}$).
\item $\var{WI}$ (weak inverse property) is the varity of loops in which  $(xy)z=1$ if and only if $x(yz)=1$ (this can be defined by an identity, \eg\ $x(y(xy)^{\rho})=1$).
\item $\var{IP}$ (inverse property) is the varieties of loops satisfying $x^{\lam}(xy)=y$ and $(xy)y^{\rho} = x$ (which are equivalent for loops with involution).
\item $\var{AA}$ (antiautomorphic inverse) is defined by $(xy)^{\lam} = y^{\lam}x^{\lam}$.

\end{enumerate}
Then:
\begin{enumerate}
\item $\var{H} \derrow \INV \odot$ (see \Cref{Inv-perfect}).
\item $\var{WI} \derrow \var{WI} \cap \INV \odot$.
\item $\var{AA}$ is perfect.
\item $\var{IP}$ is perfect.
\end{enumerate}
\end{exmpl}

\section{Moufang doubles}\label{PIII}

The goal of this section is to assert the conditions on $(L,*)$ under which $\CD(L,*)$ is Moufang, thereby generalizing \Rref{onChein}.

Recall that a loop is Moufang if it satisfies the equivalent identities
\begin{eqnarray}
z(x(zy))  &= &((zx)z)y, \label{Mouf1}\\
((xz)y)z & = & x(z(yz)) \label{Mouf2}\\ 
(zx)(yz) & = & (z(xy))z.\label{Mouf3}
\end{eqnarray}
A Moufang loop is diassociative, and in particular alternative and flexible. By Moufang's theorem, if $[a,b,c]=1$ in a Moufang loop, then $\sg{a,b,c}$ is a group.

\begin{thm}\label{MoufCD}
Let $(L,*)$ be a loop with involution, and $\gamma \in \Zent(L,*)$. Then $\CD(L,*,\gamma)$ is Moufang if and only if
\begin{enumerate}
\item $L$ is Moufang,
\item $[a,cc^*]=1$ for every $a,c \in L$,
\item $[c,c^*]=1$ for every $c$,
\item $[a,c,c^*] = 1$ for every $a,c$,
\item every $cc^*c$ is in the nucleus.
\end{enumerate}
\end{thm}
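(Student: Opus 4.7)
The plan is to verify the Moufang identity \eq{Mouf3}, $(zx)(yz) = (z(xy))z$, in $M = \CD(L,*,\gamma)$ by case analysis on whether each of $x, y, z$ lies in $L$ or in $Lj$, using Table~\eq{TAB} to expand every product. The factor $\gamma \in \Zent(L,*)$ commutes past everything and is symmetric, so it can be cancelled from both sides whenever it appears.

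For \emph{necessity}, the case $x,y,z \in L$ says nothing except that $L$ is Moufang, establishing (1). The cases with exactly one $j$-factor in the $y$ or $z$ position, and the case with all three factors in $Lj$ that peels off as an even number of $\gamma$'s, collapse to Moufang identities already valid in~$L$. The four ``mixed'' cases are where new content appears; for instance $(x,y,z)=(aj,b,c)$ reduces (after stripping the trailing $j$) to
$$(ac)(c^*b^*) = ((ab^*)c)c^*,$$
and specializing $a=1$ then $b=1$ together with Moufangness of $L$ isolates $[c,c^*]=1$ and $[a,c,c^*]=1$. The case $(aj,bj,cj)$ produces $(a^*c)(c^*b)=c^*(c(a^*b))$, whose specializations give $[a,cc^*]=1$ and $cc^*c \in \Nuc{L}$. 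The remaining two mixed cases give no further conditions, so exactly (2)--(5) are extracted.

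For \emph{sufficiency}, assume (1)--(5). The key tool is Moufang's theorem applied to $L$: condition (4) makes $\sg{a,c,c^*}$ an associative (group) subloop for every $a,c$, hence all cyclic permutations of $[a,c,c^*]$ vanish. Inside such a subgroup, (3) says $c$ commutes with $c^*$, (2) says $cc^*$ is central, and (5) says $cc^*c$ acts like a nuclear element with respect to the remaining letter, so products rearrange freely. With these rewrites, each of the four nontrivial mixed Moufang equations reduces to an identity inside a subgroup of $L$ generated by at most three elements (among $a,b,c,c^*$), where Moufangness of $L$ finishes the verification.

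The main obstacle is the bookkeeping: one must extract exactly (2)--(5) from the mixed cases without redundancy, and in the reverse direction one must sometimes introduce $c^*$ as a fourth generator, then eliminate it via Moufang's theorem. A minor subtlety is that while \Eqs{Mouf1}{Mouf3} are equivalent in Moufang loops, here one must verify one form in each case in $M$; fortunately conditions (2)--(5) are symmetric enough to yield all three forms simultaneously once established.
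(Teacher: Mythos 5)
Your skeleton is the same as the paper's: verify \eq{Mouf3} for the eight choices of $x,y,z$ from $L\cup Lj$ using Table~\eq{TAB}, extract identities on $L$, then reverse. But two of your steps do not work as described. The fatal one is in sufficiency. After conditions (3) and (4) and Moufang's theorem collapse every subloop of the form $\sg{x,c,c^*}$ to a group, the four nontrivial identities \eqs{MoufL1}{MoufL4} all reduce to the single identity $(ac)(c^*b)=(ab)cc^*$, i.e.\ \eq{MoufLnew} --- but this still involves the three independent letters $a,b,c$, and $\sg{a,b,c}$ is only Moufang, not a group, so ``Moufangness of $L$ finishes the verification'' is false. (Taking $c^*=c^{-1}$ and $cc^*=1$, the identity reads $(ac)(c^{-1}b)=ab$, which is equivalent to $[x,c^{-1},b]=1$ for all $x,b,c$, i.e.\ to associativity; so it is certainly not a consequence of the Moufang laws.) Condition (5) is precisely what rescues this, via Chein's trick: $(ab)(cc^*c)=a(b(cc^*c))=a((c(c^*b))c)=((ac)(c^*b))c$ by \eq{Mouf2}, while also $(ab)(cc^*c)=((ab)cc^*)c$, and cancelling $c$ yields \eq{MoufLnew}. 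Your plan never lets (5) enter the sufficiency argument, so the key identity is left unproved.

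On the necessity side there are two problems. First, the case bookkeeping is wrong and internally inconsistent: the cases that collapse to Moufang identities already valid in $L$ are exactly those with \emph{two} factors in $Lj$, while $(a,bj,c)$, $(a,b,cj)$ and $(aj,bj,cj)$ each carry new content (you yourself then use $(aj,bj,cj)$ as a content-bearing case after declaring it trivial). Second, and more substantively, conditions (2), (4) and (5) are not obtained by ``specializations''. Setting $b=1$ in $(ac)(c^*b^*)=((ab^*)c)c^*$ is a tautology, and no substitution into a three-variable identity can yield the three-variable statement $[a,b,cc^*c]=1$. The actual route is: take $b=1$ in all four identities to get $r_{c^*}r_c=r_cr_{c^*}=\ell_c\ell_{c^*}=\ell_{c^*}\ell_c$, deduce $[c,c^*]=1$ and (via flexibility) $[\ell_c,r_{c^*}]=1$, i.e.\ $[c,x,c^*]=1$, invoke Moufang's theorem to get $[cc^*,x]=1$, reduce all four identities to \eq{MoufLnew}, and only then obtain (5) by multiplying by $c$ and applying \eq{Mouf2}. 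These are the paper's pivots; without them the plan does not close.
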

\begin{proof}
For $\CD(L,*,\gamma)$ to be Moufang it is necessary and sufficient that \eq{Mouf3} holds for the $2^3$ choices of $x,y,z$ from $L \cup Lj$. The resulting conditions are that~$L$ is Moufang, and satisfies the identities
\begin{eqnarray}
(ac)(c^*b) & = & ((ab)c^*)c \label{MoufL1}\\ 
(ac)(c^*b) & = & ((ab)c)c^* \label{thisoneplease} \\ 
(ac)(c^*b) & = & c(c^*(ab)) \label{MoufL3} \\ 
(ac)(c^*b) & = & c^*(c(ab)). \label{MoufL4} 
\end{eqnarray}

First assume that $\CD(L,*,\gamma)$ is Moufang. As a subloop, $L$ is Moufang as well.
Taking $b = 1$ in \eqs{MoufL1}{MoufL4}, \eq{thisoneplease} becomes redundant, and the other identities imply $r_{c^*}r_c = r_cr_{c^*} = \ell_c\ell_{c^*} = \ell_{c^*}\ell_c$. In particular, $cc^* = c^*c$. Since $L$ is flexible, we also have that $[\ell_c,r_c] = 1$.
Now $1 = [\ell_c,\ell_c\ell_{c^*}] = [\ell_c,r_cr_{c^*}] = [\ell_c,r_{c^*}]$. To summarize, we proved that
$$\ell_c, r_c, \ell_{c^*}, r_{c^*} \qquad \mbox{commute}.$$
Since $[\ell_c,r_{c^*}] = 1$ we have that $[c,x,c^*] = 1$. Thus, by Moufang's theorem, $\sg{c,c^*,x}$ is associative, and in particular $[x,c^*,c]=[c,c^*,x] = 1$. Therefore $(cc^*)x = c(c^*x) = \ell_c\ell_{c^*}(x)=r_{c^*}r_{c}(x)=(xc)c^*=x(cc^*)$,
proving that 
$[cc^*,x] = 1$ for every $c \in L$. Now \eqs{MoufL1}{MoufL4} transform to
\begin{equation}\label{MoufLnew}
(ac)(c^*b) = (ab)cc^*.
\end{equation}
We now adjust Chein's proof \cite[Theorem~1]{Chein} (where the involution is assumed to be $x^* = x^{-1}$).
Multiply \eq{MoufLnew} by $c$ from the right to get
$$a(b(cc^*c)) = a(c(c^*b)c) \stackrel{\eq{Mouf2}}{=} ((ac)(c^*b))c \stackrel{\eq{MoufLnew}}{=} ((ab)cc^*)c \stackrel{\eq{Mouf2}}{=} (ab)cc^*c,$$
proving $[a,b,cc^*c] = 1$. Since the one-sided nuclei coincide, every $cc^*c \in \Nuc{L}$, so we proved all the conditions (1)--(5).

In the other direction, assume $L$ satisfies the conditions (1)--(5) of the theorem. Since $\sg{a,c,c^*}$ is associative, each of the identities \eqs{MoufL1}{MoufL4} coincides with \eq{MoufLnew}. This identity follows from $(a(b(cc^*c)) = (ab)(cc^*c)$ by reversing the computation, applying \eq{Mouf2} which is known. Therefore \eq{Mouf3} hold for all $x,y,z \in \CD(L,*,\gamma)$.
\end{proof}

\begin{rem}\label{Shtut}
Let $*$ be a normal involution on $L$, and assume \Eq{MoufLnew} holds. By taking $b = a^*$, \Eq{MoufLnew} implies that $(ac)(ac)^* = (ac)(c^*a^*) = (aa^*)(cc^*)$, so the function $\nu \co c \mapsto cc^*$ is a norm homomorphism to the center, and $L$ has the anti-automorphic inverse  property by \Pref{normalhom}.
\end{rem}

\begin{rem}
After computing the derivative of the variety $\MV$ of Moufang loops with involution
to be
$$\MV' = \set{\mbox{Moufang, $[c,c^*]=[a,cc^*]=[a,c,c^*]=[a,b,cc^*c]=1$}},$$
it would be curious to find the second derivative. Imposing $c(c^*c)$ to be in the nucleus for $\CD(L,*,\gamma)$ implies that $L$ is commutative by \Rref{nucMall}, since $(cj)(cj)^*(cj) \in Lj$. Indeed, commutativity of $L$ follows from \eq{MoufLnew} applied to the double, if we take $c=j$.
Consequently,
$$\MV'' = \set{\mbox{commutative Moufang, $[a,c,c^*]=[a,b,cc^*c]=1$}}.$$
Next, using \Eref{der1}, $\MV''' = \MV \cap \IDEN$ is the variety of commutative Moufang loops with the identity involution, which is perfect (but not homogeneous, see \Cref{homogencor}).
\end{rem}
\begin{proof}
We supply some more details for the computation of the derivatives. The varieties $\set{[c,c^*]=1}$ and $\set{[a,cc^*]=1}$ are perfect. The derivative of $\set{[c,a,c^*]=1}$ (which has the same intersection with $\MV$ as $\set{[a,c,c^*]=1}$) is $\set{[c,a,c^*]=1,\, a(a^*c)=(ca)a^*}$, which is perfect. The derivative of the variety $\set{[a,b,c(c^*c)]=1}$ is $\set{[a,b]=[a,b,c(c^*c)]=1}$.

In the passage from $\MV''$ to $\MV'''$ we use the fact that in a commutative Moufang loop, every $c^3$ is in the nucleus \cite[Lemma~VII.5.7]{Bruck}.
\end{proof}

Our notation for iterative Cayley-Dickson doubling should be self-explanatory.
\begin{cor}\label{triple}
Let $(L,*)$ be a loop with involution. The following are equivalent:
\begin{enumerate}
\item $\CD^3(L,*;\gamma_1,\gamma_2,\gamma_3)$ is Moufang for some $\gamma_1,\gamma_2,\gamma_3 \in \Zent(L,*)$;
\item $\CD^3(L,*;\gamma_1,\gamma_2,\gamma_3)$ is Moufang for every $\gamma_1,\gamma_2,\gamma_3 \in \Zent(L,*)$
\item $L$ is a commutative Moufang loop and the involution is the identity on $L$.
\end{enumerate}
\end{cor}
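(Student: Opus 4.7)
My plan is to reduce the corollary to the derivative chain $\MV \derrow \MV' \derrow \MV'' \derrow \MV''' = \MV \cap \IDEN$ established in the preceding remark, together with the homogeneity of the three upper varieties $\MV$, $\MV'$, and $\MV''$. Each defining identity of these varieties either contains no $*$ at all (the Moufang identities, and in $\MV''$ also commutativity) or involves $*$ in paired occurrences such as $[c,c^*]=1$, $[a,cc^*]=1$, $[a,c,c^*]=1$, and $[a,b,cc^*c]=1$. In every such case I expect the degrees $\deg_j$, $\deg_\epsilon$, and $\deg_\gamma$ to balance on the two sides, so that \Cref{homogencor} applies; only $\MV'''$ fails to be homogeneous, because of the identity $x^*=x$ present in $\IDEN$.

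I will set $L_0 = L$ and $L_i = \CD(L_{i-1}, *, \gamma_i, \epsilon_i)$ with $\epsilon_i = 1$. The symmetry of each $\gamma_i \in \Zent(L, *)$ together with \Pref{basiccond} ensures that the involution extends at each step, and \eq{theZent} keeps $\gamma_i$ in the symmetric center at every stage, so that the iterative construction is well defined. The implication $(3)\Rightarrow(2)$ will then follow by three successive applications of \Cref{homogencor}: starting from $L \in \MV''' = \MV\cap\IDEN$ one obtains $L_1 \in \MV''$, then $L_2 \in \MV'$, and finally $L_3 \in \MV$, for every admissible choice of parameters. For the reverse direction $(1)\Rightarrow(3)$ I will run the same chain backwards: homogeneity of $\MV$ converts $L_3 \in \MV$ into $\CD(L_2,*,1,1)\in\MV$, hence $L_2 \in \MV'$ by definition of the derivative; a second and third iteration will yield $L_1 \in \MV''$ and finally $L \in \MV''' = \MV\cap\IDEN$, which is~(3). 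The implication $(2)\Rightarrow(1)$ is immediate.

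The main obstacle I anticipate is the homogeneity verification for the defining identities of $\MV'$ and $\MV''$: in each identity one must confirm that $\deg_j$, $\deg_\epsilon$, and $\deg_\gamma$ balance on the two sides, using the recursive formulas from the definition. Without this bookkeeping, the derivative statement can only be invoked with $\gamma=\epsilon=1$, whereas the corollary requires the conclusion for arbitrary symmetric $\gamma_i$. Once the degree balancing is in place, the rest of the argument becomes a routine iteration of \Cref{homogencor} along the pre-computed derivative chain, and the three-step nature of the chain explains why the threshold sits precisely at the third double.
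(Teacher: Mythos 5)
Your proposal is correct and is essentially the paper's own (implicit) argument: the corollary is placed as an immediate consequence of the derivative chain $\MV \derrow \MV' \derrow \MV'' \derrow \MV''' = \MV\cap\IDEN$ computed in the preceding remark, combined with homogeneity of $\MV$, $\MV'$ and $\MV''$, exactly as you iterate it. The only points worth flagging are that for $(1)\Rightarrow(3)$ you need the two-sided form of homogeneity --- validity of a homogeneous identity in $\CD(L,*,\gamma,\epsilon)$ is \emph{equivalent} to, not merely implied by, its validity in $\CD(L,*,1,1)$, which is what the corollary immediately preceding \Cref{homogencor} supplies --- and that the degree bookkeeping you anticipate as the main obstacle is dispatched at once by the paper's criterion that any identity holding in abelian groups with a generic involution is homogeneous, which covers all the defining identities of $\MV$, $\MV'$ and $\MV''$.
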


Specializing in \Tref{MoufCD}, we have:
\begin{cor}
Let $L$ be a loop with a central involution, and $\gamma \in \Zent(L,*)$.
\begin{enumerate}
\item Then $\CD(L,*,\gamma)$ is Moufang if and only if $L$ is Moufang, every~$c^2$ is in the commutative center, and every $c^3$ is in the nucleus.
\item Assume $L$ is commutative. Then $\CD(L,*,\gamma)$ is Moufang if and only if $L$ is Moufang.
\end{enumerate}
\end{cor}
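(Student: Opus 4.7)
The plan is to derive both statements directly from \Tref{MoufCD} by using the centrality of $*$ to eliminate or simplify each of the five conditions there. Writing $c^* = \mu(c)c$ with $\mu(c) \in \Zent(L)$, I will reduce the list of conditions to the two clean conditions $c^2 \in \KZent(L)$ and $c^3 \in \Nuc{L}$ (on top of $L$ being Moufang), and then specialize in the commutative case.

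For the first part, I would fix $a, c \in L$ and compute each relevant product using only the fact that $\mu(c) \in \Zent(L) = \KZent(L) \cap \Nuc{L}$. First, $cc^* = c(\mu(c)c) = \mu(c)c^2$ and similarly $c^*c = \mu(c)c^2$, so $[c,c^*] = 1$ holds for free and condition (3) of \Tref{MoufCD} is automatic. Next, since multiplying an argument of a commutator by a central element leaves the commutator unchanged, $[a, cc^*] = [a, \mu(c)c^2] = [a,c^2]$; thus condition (2) is equivalent to every $c^2$ lying in $\KZent(L)$. For condition (4), a short computation using the nucleus property of $\mu(c)$ gives
\[
(ac)c^* = \mu(c)((ac)c), \qquad a(cc^*) = \mu(c)(ac^2),
\]
so $[a,c,c^*] = [a,c,c]$, which vanishes once $L$ is Moufang (hence alternative). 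Finally, $cc^*c = \mu(c)(c^2c) = \mu(c)c^3$, and since $\mu(c) \in \Nuc{L}$, condition (5) reduces to $c^3 \in \Nuc{L}$. Collecting these reductions, \Tref{MoufCD} yields exactly the statement of part (1).

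For part (2), I invoke part (1): commutativity of $L$ makes $\KZent(L) = L$, trivializing the condition $c^2 \in \KZent(L)$; and Bruck's result (\cite[Lemma~VII.5.7]{Bruck}, also cited in the remark preceding the corollary) guarantees that $c^3 \in \Nuc{L}$ in any commutative Moufang loop. Hence the conditions of part (1) collapse to ``$L$ is Moufang''. The converse implication is trivial since $L$ embeds in $\CD(L,*,\gamma)$.

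The main bookkeeping obstacle is simply making sure that each manipulation of the form ``move $\mu(c)$ around freely'' is justified by the appropriate part of $\mu(c) \in \Zent(L) = \KZent(L) \cap \Nuc{L}$; apart from this, the proof is a direct substitution into \Tref{MoufCD}, with the alternative law of a Moufang loop absorbing the only nontrivial associator that survives.
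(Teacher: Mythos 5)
Your proposal is correct and follows exactly the route the paper intends: the corollary is stated as an immediate specialization of Theorem~\ref{MoufCD}, and your reductions (condition (3) is automatic from $cc^*=\mu(c)c^2=c^*c$, condition (2) becomes $c^2\in\KZent(L)$, condition (4) becomes $[a,c,c]=1$ which Moufang absorbs, condition (5) becomes $c^3\in\Nuc{L}$, and Bruck's Lemma~VII.5.7 handles the commutative case) are precisely the justifications the paper leaves implicit. No issues.
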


\begin{cor}\label{fin1}
Assume $L$ is associative and $\gamma \in \Zent(L,*)$. Then $\CD(L,*,\gamma)$ is Moufang if and only if the involution is normal.
\end{cor}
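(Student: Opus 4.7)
The plan is to read off the conclusion directly from \Tref{MoufCD} by showing that, when $L$ is associative, the five conditions listed there collapse to the single requirement that $c^*c$ be central for every $c \in L$, which is precisely the definition of a normal involution.

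First I would observe that associativity of $L$ makes conditions (1), (4), (5) of \Tref{MoufCD} automatic: every group is Moufang, every associator $[a,c,c^*]$ vanishes, and the nucleus of $L$ is all of $L$, so each $cc^*c$ lies in $\Nuc{L}$. Hence $\CD(L,*,\gamma)$ is Moufang if and only if conditions (2) and (3) hold, namely $cc^* \in \KZent(L)$ and $cc^* = c^*c$, for every $c \in L$.

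For the forward direction, I would assume (2) and (3). From (3), $cc^* = c^*c$, so by (2) the element $c^*c$ lies in the commutative center of $L$; since $L$ is a group, $\KZent(L) = \Zent(L)$, and therefore $c^*c \in \Zent(L)$ for every $c$, which is the definition of $*$ being normal.

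Conversely, suppose $*$ is normal, so $\nu(c) = c^*c \in \Zent(L)$ for all $c$. Then $c(c^*c) = (c^*c)c$, and using associativity of $L$ this rewrites as $(cc^*)c = (c^*c)c$; cancelling $c$ on the right gives $cc^* = c^*c$, establishing (3). Since $c^*c$ is central and equal to $cc^*$, condition (2) follows as well. By \Tref{MoufCD}, $\CD(L,*,\gamma)$ is Moufang. The only step requiring any care is the observation that in a group $\KZent(L)=\Zent(L)$, which is immediate, so no genuine obstacle arises.
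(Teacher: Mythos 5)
Your proposal is correct and follows essentially the same route as the paper: specialize \Tref{MoufCD}, note that the associator conditions (1), (4), (5) are vacuous for a group, and identify conditions (2)--(3) with normality of the involution (the paper cites \Rref{normalInv} for $[c,c^*]=1$ where you give the equivalent direct cancellation argument).
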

\begin{proof}
When $L$ is associative, the associator conditions in \Tref{MoufCD} are redundant. The condition $[a,cc^*]=1$ implies the involution is normal, and $[c,c^*]=1$ holds by \Rref{normalInv}.
\end{proof}

In another vein, loops with normal involution suggest a generalization for Chein's theorem~\cite{Chein}.
\begin{cor}\label{fin2}
Let $L$ be a loop with a normal involution, and $\gamma \in \Zent(L,*)$.
Then $\CD(L,*,\gamma)$ is Moufang if and only if $L$ is associative.
\end{cor}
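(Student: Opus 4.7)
The plan is to reduce both directions to \Tref{MoufCD}, which characterizes when $\CD(L,*,\gamma)$ is Moufang by five conditions on $(L,*)$.

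For the \emph{if} direction, I would observe that if $L$ is associative and $*$ is normal, the three associator conditions of \Tref{MoufCD} become vacuous, the commutator conditions reduce to $cc^* \in \Zent(L)$ (which is exactly normality), and the nucleus condition (5) is automatic since $\Nuc{L} = L$. This is essentially the content of \Cref{fin1}, so no new work is needed here.

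For the \emph{only if} direction, I would assume $\CD(L,*,\gamma)$ is Moufang and extract the key consequence of \Tref{MoufCD}(5): every element of the form $cc^*c$ lies in $\Nuc{L}$. Using normality, $c^*c = \nu(c)$ is central, hence
$$cc^*c \;=\; c(c^*c) \;=\; c\,\nu(c) \;\in\; \Nuc{L}.$$
Since $\nu(c) \in \Zent(L) \sub \Nuc{L}$ is invertible in the nucleus, I would cancel it on the right to conclude $c \in \Nuc{L}$. Applying this to every $c \in L$ gives $L = \Nuc{L}$, so $L$ is a group, i.e., associative.

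I do not expect any serious obstacle: the whole argument boils down to one short deduction from condition (5) of \Tref{MoufCD} combined with normality. The only delicate point is the cancellation $(c\,\nu(c))\nu(c)^{-1} = c$, but centrality of $\nu(c)$ makes this automatic, so the step goes through without complication.
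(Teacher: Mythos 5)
Your proposal is correct and follows essentially the same route as the paper: both directions are read off from \Tref{MoufCD}, with the ``only if'' direction obtained by combining condition (5) with normality ($c^*c=\nu(c)$ central) and cancelling the central factor to get $c\in\Nuc{L}$, and the ``if'' direction being exactly the ``if'' direction of \Cref{fin1}. The only detail the paper makes explicit that you gloss over is why condition (3), $[c,c^*]=1$, holds in the ``if'' direction (it follows from \Rref{normalInv} because inverses are well-defined in a group), but since you defer that direction to \Cref{fin1}, which handles it, there is no gap.
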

\begin{proof}
Condition (5) in \Tref{MoufCD} requires that $cc^*c$ is in the nucleus, but $c^*c \in \Zent(L)$ when the involution is normal. Then, since $L$ is associative, the one-sided inverses coincide so $[c,c^*]=1$ by \Rref{normalInv}.
\end{proof}

Another way to read Corollaries~\ref{fin1}~and~\ref{fin2} is that
\begin{equation}\label{corcor}
\MV' \cap \ASS = \MV' \cap \Normal,
\end{equation}
where $\ASS$ is the variety of associative loops with involution, and $\Normal$ is the variety of loops with normal involutions. Furthermore, let $\COMM$ be the variety of commutative loops with involution.
In \Eref{der1} we pointed out that $\var{A}' = \var{A} \cap \var{C}$ and $\var{C}' = \IDEN$. The chain $\ASS \derrow \ASS \cap \COMM \derrow \ASS \cap \IDEN$ gives information only on loops obtained by doubling twice a commutative loop with the identity involution, when a description of three consecutive doublings, similar to \Cref{triple}, would be desirable.

Indeed, since $\Normal' = \Normal$ as stated in \Rref{N'}, we have the chain
$$\MV \cap \Normal \derrow \MV' \cap \Normal \derrow \MV'' \cap \Normal  \derrow \MV''' \cap \Normal,$$
whose members have convenient characterizations following from \eq{corcor}, namely $\MV'' \cap \Normal = \MV'' \cap \ASS = \ASS \cap \COMM$ and $\MV''' \cap \Normal = \MV''' \cap \ASS = \ASS \cap \IDEN$. As an addendum to \Cref{triple}, we now proved:
\begin{cor}
$\CD^3(L,*;\gamma_1,\gamma_2,\gamma_3)$ is Moufang with a normal involution 
if and only if $L$ is an abelian group with the identity involution. 
\end{cor}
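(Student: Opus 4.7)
My plan is to reduce the statement to \Cref{triple} combined with the perfectness of the variety $\Normal$ (\Rref{N'}) and the classical structure of commutative Moufang loops. The point is that \Cref{triple} already characterizes when $\CD^3(L,*;\gamma_1,\gamma_2,\gamma_3)$ is Moufang, namely exactly when $L$ is a commutative Moufang loop with the identity involution; the new ingredient is the normality requirement on the involution of the triple double.

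For the forward direction I would start from $M = \CD^3(L,*;\gamma_1,\gamma_2,\gamma_3)$ being Moufang and the involution on $M$ being normal. \Cref{triple} gives that $L$ is commutative Moufang and that the involution on $L$ is the identity. Next I would restrict normality from $M$ down to $L$: for every $x \in L$, the element $x^*x = x^2$ lies in $\Zent(M) \cap L$, and since centrality in the ambient loop is inherited by any subloop containing the element, $\Zent(M) \cap L \subseteq \Zent(L)$, so $x^2 \in \Zent(L)$ for every $x \in L$. Finally, combining this with the classical fact that $x^3 \in \Zent(L)$ in any commutative Moufang loop (\cite[Lemma~VII.5.7]{Bruck}, already invoked in the paper), I would conclude that $x = x^3 \cdot (x^2)^{-1} \in \Zent(L)$, so $L = \Zent(L)$ is an abelian group.

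Conversely, assuming $L$ is an abelian group with the identity involution, $L$ is in particular commutative Moufang, so \Cref{triple} immediately gives that $M = \CD^3(L,*;\gamma_1,\gamma_2,\gamma_3)$ is Moufang for any admissible triple of central elements. For normality I would first note that the identity involution on $L$ is trivially normal, since $\nu(x) = x \cdot x = x^2$ lies in $L = \Zent(L)$, and then apply \Pref{N'0} three times to propagate normality through the three successive doublings.

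The only delicate point is the restriction of normality from $M$ down to $L$, which hinges on the inclusion $\Zent(M) \cap L \subseteq \Zent(L)$; this is routine but must be verified before the commutative Moufang identity $x^3 \in \Zent(L)$ can be brought to bear. Everything else is bookkeeping on top of results already established in the paper, and indeed the corollary could equivalently be read off from the derivative chain $(\MV \cap \Normal)''' = \MV''' \cap \Normal = \ASS \cap \IDEN$ recorded just before the statement.
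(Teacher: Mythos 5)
Your proof is correct, but it closes the key implication by a different route than the paper. The paper derives the corollary entirely inside the derivative formalism: since $\Normal$ is perfect (\Rref{N'}), $(\MV\cap\Normal)'''=\MV'''\cap\Normal$, and the identity $\MV'\cap\Normal=\MV'\cap\ASS$ of \eq{corcor} --- which packages \Cref{fin1} and \Cref{fin2}, i.e.\ condition (5) of \Tref{MoufCD} --- converts the normality hypothesis into associativity of $L$, so that $\MV'''\cap\Normal=\MV'''\cap\ASS=\ASS\cap\IDEN$. You keep \Cref{triple} as the backbone (so $L$ is commutative Moufang with identity involution) but exploit normality at the element level: $x^2=x^*x\in\Zent(M)\cap L\subseteq\Zent(L)$, and combined with Bruck's fact that $x^3$ lies in the nucleus (hence the center) of a commutative Moufang loop, $x=x^3(x^2)^{-1}\in\Zent(L)$, so $L$ is an abelian group. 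Both arguments are sound; the paper's stays at the level of varieties and also yields the intermediate characterization $\MV''\cap\Normal=\ASS\cap\COMM$, while yours bypasses \eq{corcor} entirely, is self-contained modulo \Cref{triple} and the cited lemma of Bruck, and makes transparent exactly what normality contributes (centrality of squares). Your converse direction, via \Cref{triple} and three applications of \Pref{N'0}, matches the paper's. The one step you flag as delicate, $\Zent(M)\cap L\subseteq\Zent(L)$, is indeed immediate for any subloop $L\leq M$, since an element of $\Zent(M)$ commutes and associates with all of $M$ and in particular with all of $L$.
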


\section{Loops which are central-by-(elementary abelian $2$-groups)}\label{sec:8}

If a loop $L$ is a central extension of a loop $A$, then a Cayley-Dickson double of $L$ is a central extension of $A \times \Z_2$. This motivates a closer look at loops which are central-by-(elementary abelian $2$-groups).
Recall that we denote by $\var{ZA}$ the variety of central-by-abelian loops. Let $\Exptwo$ denote the variety of loops (with involution) such that every $x^2$ is central. Therefore, our main concern from now on is the variety $\var{ZA} \cap \Exptwo$.

\begin{prop}
Every $L \in \var{ZA} \cap \Exptwo$ is power-associative.
\end{prop}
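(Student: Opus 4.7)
Fix $x \in L$; the goal is to show that the subloop $\sg{x}$ is associative, equivalently that every parenthesization of an $n$-fold product of $x$'s yields the same element. By the $\Exptwo$ hypothesis, $z := x^2 \in \Zent(L)$, and since $L$ is central-by-abelian we have $\Zent(L) \subseteq \Nuc{L} \cap \KZent(L)$; thus $z$ both commutes with $x$ and associates with every element of~$L$. In particular $\sg{z}$ is a genuine cyclic subgroup of $\Zent(L)$, so the powers $z^k$ are unambiguously defined.

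The plan is to prove by strong induction on $n \geq 1$ that every parenthesization of a product of $n$ copies of $x$ equals the canonical expression
$$x^n \;=\; \begin{cases} z^{n/2} & n \text{ even},\\ z^{(n-1)/2} x & n \text{ odd}.\end{cases}$$
For the inductive step, any parenthesization decomposes as $P \cdot Q$ with $P$, $Q$ parenthesizations of $x^k$, $x^{n-k}$ for some $1 \leq k \leq n-1$; by induction each already equals its canonical form, so I need only check that $x^k \cdot x^{n-k}$ gives the canonical form for $x^n$. This splits into four parity cases for $(k, n-k)$; in each, central factors in $\sg{z}$ can be freely extracted using nuclear associativity and their commutation with $x$. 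The one case that needs real computation is when both $k$ and $n-k$ are odd: one evaluates
$$(z^a x)(z^b x) = z^a\bigl(x(z^b x)\bigr) = z^a\bigl((xz^b) x\bigr) = z^a\bigl((z^b x) x\bigr) = z^{a+b} (xx) = z^{a+b+1},$$
where every rearrangement uses that $z^a, z^b \in \Nuc{L} \cap \KZent(L)$; this matches the canonical form of $x^n$ for even $n$.

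Once this induction is established, associativity of $\sg{x}$ is automatic: for any $a, b, c \in \sg{x}$, the products $(ab)c$ and $a(bc)$ are parenthesizations of the same $n$-fold product of $x$'s, hence both equal the canonical form. The only real obstacle in the argument is bookkeeping --- verifying in each parity case that the nucleus and commutativity of $z$ with $x$ suffice to reduce to the canonical form. There is no deeper difficulty, since every step moves only central factors across products, which is justified by the central-by-abelian hypothesis.
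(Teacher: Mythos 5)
Your proof is correct and follows essentially the same route as the paper's (very terse) argument: every bracketed power of $x$ contains a factor $x\cdot x=x^2$, which is central and can therefore be extracted, so induction normalizes all products to $z^{n/2}$ or $z^{(n-1)/2}x$. The only point worth adding, if one reads power-associativity as the subloop $\sg{x}$ being a group, is that the inverse of $x$ is then well defined as $(x^2)^{-1}x$ --- which follows at once from your canonical forms, since $(z^{-1}x)x = z^{-1}(xx) = 1 = x(z^{-1}x)$.
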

\begin{proof}
Indeed the inverse of $x$ is well defined as $(x^2)^{-1}x$, and every word in $x$ which is not $1$ or $x$ involves $x^2$ which is central, so can be reduced to a shorter word.
\end{proof}

\begin{rem}
Any anti-commutative loop (\Ssref{ASAC}) is in $\Exptwo$ (because $[a,a]=1$ implies $a\in \Zent(L)$ or $aa \in \Zent(L)$).
\end{rem}

\subsection{Central and normal involutions}

Recall that an involution on~$L$ is central if $x^* \in \Zent(L) x$, and normal if $x^*x \in \Zent(L)$. Also recall that the varieties of loops with central and normal involutions are denoted by $\Central$ and $\Normal$, respectively. These properties are closely related to $\Exptwo$:
\begin{prop}\label{coolrem0}
Any two of the following properties imply the third:
\begin{enumerate}
\item The involution is central ($\Central$).
\item The involution is normal ($\Normal$).
\item Every $x^2 \in \Zent(L)$ (this is $\Exptwo$).
\end{enumerate}
In other words, 
\begin{equation}\label{coolrem0succ}
\Central \cap \Normal = \Central \cap \Exptwo = \Normal \cap \Exptwo.
\end{equation}
\end{prop}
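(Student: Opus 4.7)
The plan is to verify the three pairwise implications directly from the definitions, fixing notation once: when (1) holds I write $x^* = \mu(x)x$ with $\mu(x) \in \Zent(L)$, and when (2) holds I set $\nu(x) = x^*x \in \Zent(L)$.

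The two implications starting from (1) collapse to a single observation. Expanding $x^*x = (\mu(x)x)x$ and using that the central element $\mu(x)$ is nuclear, I would rewrite this as the identity $x^*x = \mu(x) x^2$. Since $\Zent(L)$ is a group, $\mu(x)$ is invertible and central, so $x^*x \in \Zent(L)$ if and only if $x^2 \in \Zent(L)$. This yields both (1)+(2)$\Rightarrow$(3) and (1)+(3)$\Rightarrow$(2) simultaneously, without any further calculation.

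The interesting direction is (2)+(3)$\Rightarrow$(1). The first sub-step is to show that (3) already forces $x$ to admit a well-defined two-sided inverse $x^{-1} = x^{-2}x$, where $x^{-2} := (x^2)^{-1} \in \Zent(L)$: this is a direct verification that uses only that central elements are nuclear and commute with $x$, giving $(x^{-2}x)x = x^{-2}x^2 = 1$ and $x(x^{-2}x) = (xx^{-2})x = (x^{-2}x)x = 1$. With inverses in hand, I would right-divide the defining equation $x^*x = \nu(x)$: nuclearity of $\nu(x)$ gives $(\nu(x)x^{-1})x = \nu(x) = x^*x$, so uniqueness of right division in a loop forces $x^* = \nu(x)x^{-1}$. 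Substituting $x^{-1} = x^{-2}x$ and pulling the central factors together yields $x^* = (\nu(x)x^{-2})x$, exhibiting the central multiplier $\mu(x) = \nu(x)x^{-2}$ required by (1).

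The only real obstacle is handling the loop arithmetic carefully at this last stage, avoiding any tacit use of associativity and keeping track of the distinction between left and right inverses until one knows they agree. Once one internalizes that central elements associate freely and that loops enjoy unique one-sided division, each of the three implications reduces to a short manipulation; the equivalent formulation~\eqref{coolrem0succ} is then an immediate reformulation.
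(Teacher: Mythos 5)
Your proof is correct and follows essentially the same route as the paper: the identity $x^*x = \mu(x)x^2$ disposes of both implications out of (1), and for (2)+(3)$\Rightarrow$(1) the paper likewise builds the two-sided inverse $x^{-1}=x^{-2}x$ from (3) and solves for $x^*$ as a central multiple of $x$ (the paper uses the central element $xx^*$ where you use $\nu(x)=x^*x$, an immaterial difference). No gaps.
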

\begin{proof}
First assume the involution is central. Writing $x^* = \mu(x)x$ for some $\mu \co L \ra \Zent(L)$, the involution is normal if and only if $\mu(x)x^2 = x^*x \in \Zent(L)$; so $(1)+(2)=(1)+(3)$. Likewise, if the involution is normal and $x^2 \in \Zent(L)$, then $x^{-2}x = x^{-1} =  x^*(xx^*)^{-1}$ so
$x^* = x^{-2}(xx^*)x \in \Zent(L)x$ and $(2)+(3)\implies (1)$.
\end{proof}

In particular,
$$\var{ZA} \cap \Central \cap \Normal \sub \var{ZA} \cap \Exptwo.$$

\subsection{Properties of the double}

Let $M =\CD(L,*,\gamma)$. Since $(aj)^2 = a^*a$, $M \in \Exptwo$ if and only if $L \in \Exptwo \cap \Normal$. Namely,
\begin{equation}\label{E2'}
\Exptwo' = \Normal \cap \Exptwo.
\end{equation} \Rref{N'} then shows that this variety is perfect.

Since we are mostly concerned with the perfect variety $\var{ZA}_0 \cap \superCentral$ (see \Eref{ExZA}), we summarize this observation by noting that
\begin{equation}\label{cool}
\var{ZA}_0 \cap \superCentral \cap \Normal = \var{ZA}_0 \cap \superCentral \cap \Exptwo 
\end{equation}
is also perfect. 

\section{Diassociativity}\label{sec:dias}

We now add a new ingredient. Recall that a loop is {\bf{diassociative}} if every subloop generated by two elements is a group. Kowalski~\cite{Kow} proved that the variety of diassociative loops is not finitely based. He proves this relative to power associativity and existence of the inverse, so the same claim holds for the variety~$\DIAS$ of diassociative loops with involution.
Nevertheless, we have:
\begin{prop}\label{fb}
$\var{ZA} \cap \Exptwo \cap \DIAS$ is finitely based.
\end{prop}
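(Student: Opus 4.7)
The plan is to show that, within $\var{ZA} \cap \Exptwo$, diassociativity is captured by a finite set of identities---specifically alternativity, flexibility, and the single extra identity $[x,y,xy] = 1$. Since $\var{ZA}$ and $\Exptwo$ are already finitely based (by a finite list of identities stating centrality of squares, commutators, and associators), this will give a finite basis for $\var{ZA} \cap \Exptwo \cap \DIAS$. One direction is immediate: in a diassociative loop every subloop $\langle x,y\rangle$ is a group, so all four associators $[x,x,y]$, $[x,y,x]$, $[y,x,x]$, $[x,y,xy]$ vanish.

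For the converse, let $L \in \var{ZA} \cap \Exptwo$ satisfy these four identities and fix $a,b \in L$; I aim to show $\langle a,b\rangle$ is associative, whence a group (each $x$ is invertible via $x^{-1} = (x^2)^{-1}x$, using $x^2 \in \Zent(L)$). The key technical observation is that in any central-by-abelian loop, commutators and associators depend only on the images of their arguments in $L/\Zent(L)$: this follows from the nuclearity of central elements by cancellation in the defining equations $xy = yx\cdot[x,y]$ and $(xy)z = x(yz)\cdot[x,y,z]$. Since every square is central, $L/\Zent(L)$ is an elementary abelian $2$-group, and $\langle \bar a,\bar b\rangle$ has order at most four, represented by $\{1,\bar a,\bar b,\overline{ab}\}$.

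A routine computation, using alternativity, flexibility, and centrality of commutators and squares, then shows that every element of $\langle a,b\rangle$ has the form $z\cdot c$ with $z$ central and $c \in \{1,a,b,ab\}$, and that such representations are closed under multiplication---for instance $a(ab) = a^2 b$, $(ab)a = a^2 b\cdot[b,a]$, and $b(ab) = ab^2\cdot[b,a]$. By the invariance observation, it then suffices to verify $[c_1,c_2,c_3]=1$ for all $c_i \in \{a,b,ab\}$, a total of $27$ associators (triples involving a $1$ being automatic).

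Of these, the three with all three entries equal and the eighteen with exactly two equal entries vanish directly: $[x,x,x]=1$ follows from $(xx)x = x^2\cdot x = x\cdot x^2 = x(xx)$ using $x^2 \in \Zent(L)$, and the remaining two-equal cases follow from alternativity and flexibility. The main obstacle is the six triples with pairwise distinct entries, namely $[a,b,ab]$, $[a,ab,b]$, $[b,a,ab]$, $[b,ab,a]$, $[ab,a,b]$, $[ab,b,a]$; each arises as an instance of $[x,y,xy]=1$ under a suitable substitution. For example, $(x,y)=(ab,a)$ yields $[ab,a,(ab)a] = [ab,a,a^2 b\cdot[b,a]] = [ab,a,b]$, using the invariance of associators under central perturbation; the other five substitutions---$(a,b)$, $(b,a)$, $(a,ab)$, $(b,ab)$, $(ab,b)$---reduce analogously to the required associators after identifying the product $xy$ modulo $\Zent(L)$, completing the verification.
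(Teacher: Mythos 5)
Your proof is correct and rests on the same two observations as the paper's: in a loop of $\var{ZA} \cap \Exptwo$ the subloop $\sg{x,y}$ is contained in $\Zent(L)\set{1,x,y,xy}$, and commutators and associators depend only on their arguments modulo the center, so diassociativity relative to this (finitely based) variety reduces to finitely many associator identities in two variables. The paper stops at the resulting $4^3$ identities, whereas you go further and correctly reduce them to the four identities of alternativity, flexibility and $[x,y,xy]=1$ --- a genuine refinement, though not needed for finite basedness (and note the harmless slip: $b(ab)=ab^2\cdot[a,b]$ rather than $ab^2\cdot[b,a]$, which does not affect your argument since only the coset modulo $\Zent(L)$ matters).
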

\begin{proof}
A loop is diassociative if $\sg{x,y}$ is a group for any $x,y \in L$.
But if $L$ is central-by-abelian with $Z = \Zent(L)$, and $L/Z$ is of exponent $2$, then $\sg{x,y} \sub Z \cup Zx \cup Zy \cup Zxy$. Every associator in this latter set is of the form $[u,v,w]$ where $u,v,w \in \set{1,x,y,xy}$, so there are $4^3$ identities to verify.
\end{proof}

Assume $L \in \var{ZA}$. Dividing \eq{M2} by \eq{M1} and changing variables, we obtain
\begin{equation}\label{commlin}
[a,bc] = [a,b][a,c] \cdot [a,c,b]^{-1} [c,b,a]^{-1} [c,a,b].
\end{equation}
In particular, if $L \in \var{ZA} \cap \DIAS$, then
\begin{equation}\label{commlincor}
[a,ac] = [a,c].
\end{equation}

\begin{thm}
We have that $\var{ZA} \cap \Exptwo \cap \Central \cap \DIAS \sub \DIAS'$.
\end{thm}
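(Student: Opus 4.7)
My plan is to analyze the $2$-generated subloops of $M := \CD(L,*,1,1)$ and show each is a group. Subloops of $M$ lying inside $L$ are already $2$-generated subgroups by diassociativity of $L$. For a $2$-generated subloop not contained in $L$, \Rref{geni} lets me write it as $K := \sg{a_0,bj}$ for some $a_0,b \in L$, so the real task is to prove that such a $K$ is a group.

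First I would set $N := \sg{a_0,b} \leq L$, which is a group by diassociativity of $L$. However, $N$ is generally not $*$-closed, since $g^* = \mu(g)g$ for $g \in N$ and the central element $\mu(g)$ need not lie in $N$. To remedy this, let $Z_N := \sg{\mu(g) : g \in N} \leq \Zent(L)$ and put $N^{\sharp} := Z_N \cdot N$. Using that $\mu$ restricts to a group homomorphism on $\Zent(L)$ (\Pref{muinduces}), $Z_N$ is itself $\mu$-stable; combined with centrality of $Z_N$ and the group structure of $N$, this shows that $N^{\sharp}$ is an associative subloop of $L$ closed under~$*$.

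Next, I would observe that on $N^{\sharp}$ the involution remains central and is also normal: by \eq{coolrem0succ} the hypothesis $L \in \Exptwo \cap \Central$ forces $L \in \Normal$, so $x^*x \in \Zent(L) \cap N^{\sharp} \subseteq \Zent(N^{\sharp})$ for every $x \in N^{\sharp}$. Now \Cref{fin1}, applied to the associative $N^{\sharp}$ with the symmetric central element $\gamma = 1$, yields that $\CD(N^{\sharp},*,1)$ is Moufang. A routine check using \eqs{Mul1}{Mul3} and the closure of $N^{\sharp}$ under $*$ shows that $N^{\sharp} \cup N^{\sharp} j$ is closed under the multiplication inherited from $M$, so $\CD(N^{\sharp},*,1)$ sits inside $M$ as a subloop containing both $a_0$ and $bj$, and hence contains $K$. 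Since Moufang loops are diassociative, $K$ is a group, completing the argument.

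The main thing to be careful about is the bookkeeping around the enlargement $N \mapsto N^{\sharp}$: one must verify both that $N^{\sharp}$ is a subgroup of $L$ closed under $*$, and that the Cayley-Dickson double $\CD(N^{\sharp},*,1)$ embeds as a subloop of $M$. Both are straightforward once the definitions are unwound. The conceptual heart is the reduction to Chein's Moufang doubling theorem (\Cref{fin1}) via the symmetric-closure $N^{\sharp}$ of the $2$-generated subgroup $N$; I do not anticipate any further obstruction.
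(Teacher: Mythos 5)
Your proof is correct, but it takes a genuinely different route from the paper's. The paper argues computationally: after reducing, as in \Pref{fb}, to the finitely many associators $[u,v,w]$ with $u,v,w \in \set{x_0, y_0j, (x_0y_0)j}$, it checks that each one vanishes by running through the explicit table of \Pref{Icomputeassoc} and invoking the commutator identities \eq{commlin} and \eq{commlincor}. You instead reduce to \Cref{fin1}: every two-generated subloop of $\CD(L,*,1,1)$ not contained in $L$ sits inside $\sg{N^{\sharp},j} \isom \CD(N^{\sharp},*,1)$, which is Moufang and hence diassociative. Your enlargement $N \mapsto N^{\sharp} = Z_N N$ is sound: $Z_N N$ is an associative subloop because $Z_N$ is central and $N$ is a group, it is $*$-closed because $\mu(\mu(g)) = \mu(g)^{-2} \in Z_N$ by \eq{theyantisymm2}, and the restricted involution is normal by \eq{coolrem0succ} exactly as you say, so \Cref{fin1} applies with $\gamma = 1$. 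As for what each approach buys: your argument is shorter and more conceptual, it nowhere uses the hypothesis $L \in \var{ZA}$ (only $\Exptwo$, $\Central$ and $\DIAS$), so it establishes a formally stronger inclusion, and it yields the extra information that every two-generated subloop of the double meeting $Lj$ is contained in a Moufang subloop; the paper's computation, by contrast, works uniformly for an arbitrary symmetric $\gamma$ without any auxiliary enlargement, and stays entirely within the associator calculus that the surrounding sections already develop and reuse.
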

\begin{proof}
Let $L \in \var{ZA} \cap \Exptwo \cap \Central \cap \DIAS$, and $M = \CD(L,*,\gamma)$ a Cayley-Dickson double, where $\gamma^* = \gamma$. By \Pref{coolrem0} and \eq{E2'}, $L \in \Normal \cap \Exptwo = \Exptwo'$, and so $M \in \Exptwo$.
As in \Pref{fb}, to verify that $M$ is diassociative it suffices to prove for any $x,y \in M$ that $[u,v,w] = 1$ for $u,v,w \in \set{1,x,y,xy}$. If $x,y \in L$, we know that the associators are trivial since $L \in \DIAS$.
Otherwise, by changing variables we may assume $x = x_0$ and $y = y_0j$ for $x_0,y_0 \in L$. Letting $Z = \Zent(M)$, we have that $\sg{x_0,y_0j} \sub Z \cup Z x_0 \cup Z y_0 j \cup Z (x_0 y_0) j$. Let $u,v,w \in \set{x_0,y_0j ,(x_0y_0)j}$. One computes all the associators by passing over the list of associators in $M$ given in \Pref{Icomputeassoc}, ignoring the involution, as it is central by assumption. An element from $L$ in this list has to be $x_0$; and elements from $Lj$ are $y_0j$ or $(x_0y_0)j$. All the associators in $L$, with entries from $\set{x_0,y_0,x_0y_0}$, are trivial by $L$ being diassociative. This proves $[u,v,w] = 1$ when at least two of $u,v,w$ are equal to $x_0$. Also, after the associators in $L$ vanished, it shows that
$$[a,bj,cj] = [cj,a,bj] = [bj,cj,a] = [b,a][c,a],$$
but now $a = x_0$ and $b,c \in \set{y_0, x_0y_0}$, and so $bc \in Z \cup Za$, and $[b,a][c,a] = [bc,a] = 1$ by \eq{commlin}.
Finally, we have that $[aj,bj,cj] = [ab,c][a,b]$ with $a,b,c \in \set{y_0,x_0y_0}$; if $a = b$ we are done; and if $c \in \set{a,b}$ we are done by \eq{commlincor}.
\end{proof}

\begin{cor}\label{BigD}
The variety $\var{ZA}_0 \cap \Exptwo \cap \superCentral \cap \DIAS$ is perfect.
\end{cor}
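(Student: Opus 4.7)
The plan is to establish perfection by proving the inclusion $\V \sub \V'$, since the reverse inclusion holds automatically for any variety. All four defining properties $\var{ZA}_0$, $\Exptwo$, $\superCentral$, $\DIAS$ are homogeneous in the sense of \Sref{badsec}, so I would take an arbitrary $L$ in the variety, set $M = \CD(L,*,1,1)$, and verify that $M$ lies in each of the four components separately.

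For $\superCentral$ together with the skeleton of $\var{ZA}_0$, I would invoke the fourth case of \Tref{main1} with $\gamma = \epsilon = 1$, whose hypotheses (central-by-abelian $L$ with $[L,L,L] \sub \Zent(L,*)$, super-central involution, and $\gamma^* = \gamma$) are all supplied by $L \in \var{ZA}_0 \cap \superCentral$. This yields $M$ central-by-abelian with super-central involution. To upgrade to the full $\var{ZA}_0$, I would note that super-centrality on $M$ gives $[M,M] \sub \Zent(M,*)$ by \Pref{sc}, and that \Pref{MMML} places $[M,M,M]$ inside $L$; combined with $[M,M,M] \sub \Zent(M)$, the identity \eq{theZent} then forces $[M,M,M] \sub \Zent(M) \cap L = \Zent(L,*) \sub \Zent(M,*)$.

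For the remaining two properties, I would handle $\Exptwo$ via \eq{E2'}, which identifies $\Exptwo'$ with $\Normal \cap \Exptwo$: since $L \in \superCentral \cap \Exptwo$, \Pref{coolrem0} gives $L \in \Normal$, hence $M \in \Exptwo$. Diassociativity of $M$ is precisely the content of the theorem immediately preceding this corollary, applied to $L \in \var{ZA} \cap \Exptwo \cap \Central \cap \DIAS$, which holds since $\var{ZA}_0 \sub \var{ZA}$ and $\superCentral \sub \Central$.

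The only delicate point I anticipate is the step from the ambient centrality $[M,M,M] \sub \Zent(M)$ supplied by \Tref{main1} to the stronger containment $[M,M,M] \sub \Zent(M,*)$ required by $\var{ZA}_0$; this is resolved by the key relation $\Zent(M) \cap L = \Zent(L,*)$ from \eq{theZent}, which says that every central element of $M$ lying in $L$ is automatically symmetric. Apart from this minor repackaging, the proof is essentially a concatenation of \Tref{main1}, \Pref{coolrem0}, \eq{E2'}, and the preceding theorem.
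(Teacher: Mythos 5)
Your proposal is correct and rests on the same ingredients as the paper's proof: the perfection of $\var{ZA}_0 \cap \Exptwo \cap \superCentral$ (which the paper simply cites as \eq{cool} and you re-derive from \Tref{main1}, \eq{E2'} and \Pref{coolrem0}), together with the preceding theorem $\var{ZA} \cap \Exptwo \cap \Central \cap \DIAS \sub \DIAS'$ to handle diassociativity. The paper packages this as the one-line derivative computation $(\var{V}\cap\DIAS)' = \var{V}'\cap\DIAS' = \var{V}\cap\DIAS$, but your component-by-component verification of $\CD(L,*,1,1)$ is the same argument unwound.
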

\begin{proof}
Let $\var{V} = \var{ZA}_0 \cap \Exptwo \cap \superCentral$, which was observed in \eq{cool} to be perfect. Then $(\var{V} \cap \DIAS)' = \var{V}' \cap \DIAS' = \var{V} \cap \DIAS' = (\var{V} \cap \DIAS) \cap \DIAS' = \var{V} \cap \DIAS$ by the theorem.
\end{proof}

\section{Octonion loops}\label{sec:oct}


If $L \in \var{ZA} \cap \Exptwo$, then $L/\Zent(L)$ is an elementary abelian $2$-group. The {\bf{dimension}} $\dim(L)$ of a loop $L \in \var{ZA} \cap \Exptwo$, defined as the dimension of the vector space $L/\Zent(L)$ over $\F_2$, plays a key role in the next sections. The dimension is zero when $L$ is an elementary abelian group. That the dimension cannot be~$1$ is a well-known exercise on groups, which applies to loops as well.

\begin{prop}
If $P \leq L$ are in $\var{ZA} \cap \Exptwo$ then $\dim(P) \leq \dim(L)$.
\end{prop}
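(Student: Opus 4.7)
The plan is to build a chain of quotients from $P$ going through the intermediate subloop $P \cap \Zent(L)$, rather than attempting a direct map $P/\Zent(P) \to L/\Zent(L)$, which is not well defined in general.

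First, I would observe that $P \cap \Zent(L) \subseteq \Zent(P)$: any element of $P$ that commutes and associates with everything in $L$ certainly does so with everything in $P$. In particular $P \cap \Zent(L)$ is central (hence normal) in $P$, so the quotient $P/(P \cap \Zent(L))$ is a well-defined loop. Since $\Zent(P) \supseteq P \cap \Zent(L)$, there is a surjective homomorphism
\[
P/(P \cap \Zent(L)) \twoheadrightarrow P/\Zent(P),
\]
giving $\dim_{\F_2}(P/\Zent(P)) \leq \dim_{\F_2}(P/(P \cap \Zent(L)))$ (both quotients are $\F_2$-vector spaces because $P \in \var{ZA} \cap \Exptwo$, and the surjection restricts accordingly to the exponent-two abelian quotient).

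Next, consider the canonical projection $\pi \co L \to L/\Zent(L)$, and restrict it to $P$. The restriction $\pi|_P \co P \to L/\Zent(L)$ is a loop homomorphism with kernel precisely $P \cap \Zent(L)$. By the isomorphism theorem for loops, its image is a subloop of $L/\Zent(L)$ isomorphic to $P/(P \cap \Zent(L))$. Since $L/\Zent(L)$ is an $\F_2$-vector space, this image is a subspace, and therefore
\[
\dim_{\F_2}(P/(P \cap \Zent(L))) \leq \dim_{\F_2}(L/\Zent(L)) = \dim(L).
\]
Chaining the two inequalities gives $\dim(P) \leq \dim(L)$, which is the claim.

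The only potential obstacle is the very reason a proof is needed at all: one might hope for a direct embedding $P/\Zent(P) \hookrightarrow L/\Zent(L)$, but this fails because an element of $\Zent(P)$ need not lie in $\Zent(L)$. Inserting the intermediate quotient $P/(P \cap \Zent(L))$ sidesteps this, since the kernel of $\pi|_P$ is exactly $P \cap \Zent(L)$, and this group sits inside $\Zent(P)$ so the dimensions line up in the right direction.
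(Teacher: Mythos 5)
Your proof is correct and follows essentially the same route as the paper's, which applies Noether's isomorphism theorem to get $P/\Zent(P) \cong \Zent(L)P/\Zent(L)\Zent(P)$, a quotient of the subspace $\Zent(L)P/\Zent(L) \subseteq L/\Zent(L)$ --- your two-step factorization through $P/(P\cap\Zent(L))$ is the same identification, merely unpacked. One small presentational point: that $P/(P\cap\Zent(L))$ is an $\F_2$-vector space is justified by your \emph{second} step (its embedding into $L/\Zent(L)$), not by $P \in \var{ZA}\cap\Exptwo$ alone, so the two steps should be taken in the other order.
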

\begin{proof}
We have that
$P \cap \Zent(L)\Zent(P) = (P \cap \Zent(L))\Zent(P) = \Zent(P)$, so that
$P/\Zent(P) \isom \Zent(L)P/\Zent(L)\Zent(P)$ by Noether's isomorphism theorem, which is a quotient of the subspace $\Zent(L)P/\Zent(L) \sub L/\Zent(L)$.
\end{proof}

The following example shows that a $2$-dimensional loop need not be diassociative.
\begin{exmpl}[$L \in \var{ZA} \cap \superCentral$ of dimension $2$ which is not diassociative] 
Let $L = \set{\pm 1, \pm a_1, \pm a_2, \pm a_3}$ be a commutative loop defined by $a_i^2 = -1$ and $a_ia_j = a_k$ for any permutation $\set{i,j,k}$ of $\set{1,2,3}$. The identity map is a (super-central) involution. But $L$ is not alternative, as $[a_i,a_i,a_j] = -1$.
\end{exmpl}

Motivated by \cite{ExtraII}, we say that a $3$-dimensional loop in $\var{ZA} \cap \Exptwo$ is an {\bf{octonion loop}} if it satisfies the Moufang identity (equivalently, it is extra; see \cite{KK}).

\begin{exmpl}
Let $F$ be a field of characteristic not $2$, and $i,j,k$ the standard generators of an octonion algebra over $F$. Then
$O = \mul{F}\sg{i,j,k}$ is an octonion loop (which is not associative). Indeed $O$ is Moufang as a multiplicative subloop of the alternative algebra $F[i,j,k]$, and $O/\Zent(O) \isom \Z_2^3$.
\end{exmpl}

\begin{lem}\label{assocofM}
Let $O$ be an octonion loop. Then the associator of every three generators is a fixed central element of order at most $2$.
\end{lem}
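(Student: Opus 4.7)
The plan is to view the associator on $O$ as an alternating trilinear form on the $3$-dimensional $\F_2$-vector space $\bar{O} = O/\Zent(O)$, and then exploit the fact that every change-of-basis matrix in $\GL[3](\F_2)$ has determinant~$1$, so such a form takes a single value on every basis.

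Set $Z = \Zent(O)$. Since $O \in \var{ZA}$ all associators lie in $Z$, and since $Z \sub \Nuc{O} \cap \KZent(O)$, the associator descends to a well-defined function $\phi \co \bar{O} \times \bar{O} \times \bar{O} \ra Z$. Because $O$ is Moufang it is diassociative, alternative and flexible, which gives $[x,x,y] = [x,y,x] = [y,x,x] = 1$; hence $\phi$ vanishes whenever two of its arguments coincide in $\bar{O}$, i.e.~$\phi$ is alternating in the vector-space sense.

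The central technical step is to establish trilinearity of $\phi$, namely $[xy,z,w] = [x,z,w][y,z,w]$ and the two analogous identities in the remaining slots. This follows from $O$ being an extra loop (as noted in the paragraph preceding the lemma) together with the standard structural fact from the theory of extra loops that their associators are skew-symmetric, trilinear, and central; a self-contained derivation combines the Jacobi-type identities \eq{M1}--\eq{M3} with $[L,L] \sub Z$ and the cyclic symmetry of the associator in a Moufang loop. Once trilinearity is in hand, the fact that $x^2 \in Z \sub \Nuc{O}$ gives
\[
[x,y,z]^2 \;=\; [x^2,y,z] \;=\; 1,
\]
so every value of $\phi$ has order at most~$2$.

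Finally, for any three elements $a,b,c$ generating $O$, the images $\bar{a},\bar{b},\bar{c}$ generate $\bar{O} \isom \F_2^3$, and since a generating set of size equal to the dimension of an $\F_2$-vector space must be a basis, $(\bar{a},\bar{b},\bar{c})$ is an $\F_2$-basis of~$\bar{O}$. An alternating trilinear form on $\F_2^3$ with values in an abelian group of exponent $2$ takes the same value on every ordered basis: the transition matrix lies in $\GL[3](\F_2)$ and necessarily has determinant~$1$, while the exponent-$2$ condition makes permutations of the arguments equally harmless. Hence $[a,b,c]$ equals a fixed central element $\alpha \in Z$ of order at most~$2$, independent of the choice of generating triple. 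The main obstacle in this plan is the trilinearity of~$\phi$: deducing it directly from \eq{M1}--\eq{M3} is tangled because those identities interweave associators with commutators, so the cleanest route is to exploit the extra-loop structure of $O$.
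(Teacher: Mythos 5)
Your overall architecture --- descend the associator to a map $\phi$ on $O/\Zent(O)\isom \F_2^3$, show it is alternating and trilinear, and conclude that it is a fixed multiple of the determinant and hence constant on ordered bases --- is sound, and its endgame coincides with the paper's (which also reduces to the transitivity of $\GL[3](\F_2)$ on bases). But the one genuinely substantive step, trilinearity of $\phi$, is asserted rather than proved, and you flag it yourself as ``the main obstacle''. The fallback you sketch --- deriving $[xy,z,w]=[x,z,w][y,z,w]$ from \eqs{M1}{M3}, $[L,L]\sub\Zent(L)$ and cyclic symmetry --- is not a proof: the identities \eqs{M1}{M3} hold in \emph{every} central-by-abelian loop, and the paper's own $2$-dimensional example in $\var{ZA}\cap\Exptwo$ with $[a_i,a_i,a_j]=-1$ shows that in that generality the induced form need not even be alternating, let alone trilinear. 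Trilinearity of the associator form is a real theorem about extra loops (Kinyon--Kunen), not a formal consequence of the commutator--associator calculus, and you neither carry out the derivation nor give a precise citation. The gap propagates: your order-$2$ argument $[x,y,z]^2=[x^2,y,z]=1$ also rests on the unproved trilinearity.

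The paper's proof shows how to get by with strictly weaker inputs. Applying Bruck's Lemma VII.5.5 to the identity $[[x,y,z],x]=1$ (automatic here, since associators are central) yields $[x^{-1},y,z]=[x,y,z]^{-1}=[x,y,z]$, giving order $2$ directly, together with the single shear identity $[x,y,z]=[x,yz,z]$; combined with the full symmetry of the associator (Bruck (5.21)), these give invariance of $\phi$ under the permutation matrices and under the elementary transvection $1+e_{23}$, which already generate $\GL[3](\F_2)$. So transitivity on bases is reached without ever establishing trilinearity. To repair your write-up, either quote the extra-loop trilinearity theorem precisely, or replace trilinearity by the two identities above extracted from Bruck's lemma.
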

\begin{proof}
We use Bruck's lemma on equivalence of identities of Moufang loops, \cite[Lemma VII.5.5]{Bruck}. The loop $O$ clearly satisfies the identity (i) $[[x,y,z],x]=1$ of the lemma. Therefore, it satisfies the identity (iii) of the lemma, namely $[x, y, z]^{-1}= [x^{-1}, y, z] = [x,y,z]$, so $\alpha = [x,y,z]$ has order $2$.
Next, the associator is fully symmetric by (5.21) in \cite{Bruck}, and satisfies $[x,y,z]=[x,yz,z]$ by (v) of the same lemma. View $x,y,z$ as basis vectors for a $3$-dimensional vector space $V$ over $\F_2$. The automorphism group $\GL[3](\F_2)$ of this space acts transitively on the bases of~$V$, and thus on the associators $[v_1,v_2,v_3]$ for bases $\set{v_1,v_2,v_3}$. Now, the permutations and the elementary operation $1+e_{23}$  generate $\GL[3](\F_2)$, so the above identities show that the associator of any basis is equal to $\alpha$ (clearly $\alpha = 1$ if and only if $O$ is associative).
\end{proof}

Therefore, octonion loops belong to the class of loops studied in \cite{CG-1}.

\begin{cor}
An octonion loop is a central extension of an abelian group by a group of order $2$.
\end{cor}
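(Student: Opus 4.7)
The strategy is to take $\alpha\in\Zent(O)$ from Lemma~\ref{assocofM}, set $N=\sg{\alpha}\leq\Zent(O)$, and show that $O/N$ is an abelian group; since $|N|\leq 2$ and $N\sub\Zent(O)$, this exhibits $O$ as the desired central extension. Equivalently, I need $[O,O]\cdot[O,O,O]\sub N$.

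The first step is to upgrade Lemma~\ref{assocofM} from bases to arbitrary triples, establishing $[O,O,O]\sub N$. Since $O$ is Moufang and central-by-abelian, the associator $(a,b,c)\mapsto[a,b,c]$ descends to a well-defined map on $(O/\Zent(O))^3\isom(\F_2^3)^3$; diassociativity (Moufang's theorem applied to two-generator subloops) forces $[a,b,c]=1$ whenever two of the cosets $\bar a,\bar b,\bar c$ coincide, and the total symmetry of the associator (Bruck's identity, as invoked in the proof of Lemma~\ref{assocofM}) combined with \eq{commlin} yields multilinearity modulo the center in each slot. An alternating trilinear map on $\F_2^3$ whose value on a basis is $\alpha$ lands in $\sg{\alpha}=N$, so $[O,O,O]\sub N$ and $O/N$ is associative.

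The second step, which is the main obstacle, is to show $[O,O]\sub N$, making $O/N$ abelian. Because $O$ is extra (Moufang with $x^2\in\Zent(O)\sub\Nuc O$), commutators have order dividing~$2$, and by \eq{commlin} together with symmetry of associators, the commutator descends to an alternating map $\chi\colon (O/\Zent(O))^2\to\Zent(O)$ that is bilinear modulo~$N$. To push the image of $\chi$ into $N$, I would evaluate the extra identity $x(y(zx))=((xy)z)x$ on representatives $x,y,z$ of a basis of $O/\Zent(O)$: expanding both sides via \eq{commdef}--\eq{assocdef}, each associator of basis elements is $\alpha$ by Lemma~\ref{assocofM}, and the surviving discrepancies between the two sides pin the three basis commutators $[x,y]$, $[x,z]$, $[y,z]$ inside $\sg{\alpha}=N$. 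Together with the vanishing of $[a,b]$ on linearly dependent pairs (again diassociativity), this gives $[O,O]\sub N$.

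Combining the two inclusions, $O/N$ is an abelian group and $O$ is a central extension of $O/N$ by $N$, with $|N|\leq 2$. The principal difficulty is Step~2: Lemma~\ref{assocofM} controls only associators, so one must genuinely exploit the extra identity beyond what \eq{commlin} provides in order to collapse the a priori three-parameter family of basis commutators into the single cyclic subgroup $\sg{\alpha}$. Alternatively, in view of the sentence immediately preceding the corollary, one may appeal directly to the structure theorem of \cite{CG-1} for loops whose associators take a single central value of order~$2$, applied to the class in which Lemma~\ref{assocofM} has just placed octonion loops.
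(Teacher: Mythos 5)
The paper gives no proof of this corollary at all --- it is asserted as an immediate consequence of \Lref{assocofM} and the preceding (informal) remark that octonion loops fall into the class of \cite{CG-1} --- so your attempt has to stand on its own. Your Step~1 is essentially fine, though it needs no ``multilinearity'' of the associator (which would in any case carry commutator correction terms): diassociativity kills $[a,b,c]$ whenever $\bar a,\bar b,\bar c$ span a subspace of $O/\Zent(O)$ of dimension at most $2$, and \Lref{assocofM} handles every basis, so $[O,O,O]\sub\sg{\alpha}$.

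Step~2, which you rightly single out as the main obstacle, is a genuine gap, and the route you propose provably cannot close it. Every identity of Bol--Moufang type --- in particular \eq{Mouf1}--\eq{Mouf3} and the extra identity --- has the \emph{same ordered string of variables} on both sides, so when both sides are rebracketed to a common left-normed form in a central-by-abelian loop, only central associator factors appear and every commutator cancels. Concretely, in $\var{ZA}$ the identity \eq{Mouf3} is equivalent to $[zx,y,z]=[z,x,y]$, and $x(y(zx))=((xy)z)x$ is equivalent to $[x,y,z]\,[x,yz,x]\,[y,z,x]=1$: evaluating the extra identity on a basis produces a relation among associators only, and no ``surviving discrepancy'' ever involves $[x,y]$, $[x,z]$ or $[y,z]$. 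Beyond having order at most $2$ (which you correctly extract from \eq{commlin} and diassociativity), the three basis commutators remain completely unconstrained. In fact the statement fails under the paper's definition: the group $G=\sg{x,y,z}$ with $x^2=y^2=z^2=1$ and $[x,y],[x,z],[y,z]$ independent central involutions has order $64$, lies in $\var{ZA}\cap\Exptwo$ with $\dim(G/\Zent(G))=3$, and is Moufang because it is a group, hence is an octonion loop; yet $[G,G]\isom\Z_2^3$, so $G/N$ is nonabelian for every subgroup $N$ of order $2$. (A nonassociative variant is obtained by multiplying the octonion factor set by a nondegenerate bilinear form into a larger center.) Your fallback via \cite{CG-1} is circular for the same reason: membership in that class requires a \emph{unique} nonidentity commutator, which is exactly what Step~2 must establish. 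The corollary is true for the intended examples $\mul{F}\sg{i,j,k}$ and the $Q_n$'s, but as stated it needs an additional hypothesis forcing $[O,O]\sub\sg{\alpha}$, which neither \Lref{assocofM} nor the Moufang identity supplies.
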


\section{Stability under automorphisms}\label{sec:stab}

Every automorphism of $(L,*)$, which fixes $\gamma$ and~$\epsilon$, can be extended to an automorphism of $M = \CD(L,*,\gamma)$ by fixing $j$. Our goal in this section is to find conditions under which this process describes the full automorphism group of $M$.
We assume $L$ belongs to the perfect variety $\var{ZA}_0 \cap \Exptwo \cap \superCentral$ of \eq{cool}.

\begin{prop}
Let $L \in \var{ZA}_0 \cap \Exptwo \cap \superCentral$ and $M = \CD(L,*,\gamma)$.
Then
\begin{equation}\label{dimdim}
\dim(M) \geq \dim(L),
\end{equation}
with equality if and only if the involution on $L$ is the identity.
\end{prop}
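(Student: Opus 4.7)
The plan is to identify $\Zent(M)$ explicitly and then count, splitting on whether $*$ acts as the identity on $L$.

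The first step is to confirm that $M \in \var{ZA} \cap \Exptwo$, so that $\dim(M)$ is even defined. That $M$ is central-by-abelian follows from \Tref{main1}, since the hypotheses $L \in \var{ZA}_0 \cap \superCentral$ supply exactly the conditions needed. For $M \in \Exptwo$, I will compute the two types of squares: super-centrality gives $(a^2)^* = \mu(a)^2 a^2 = a^2 \in \Zent(L,*)$ for $a \in L$; and by \Pref{coolrem0}, $\superCentral \cap \Exptwo \subseteq \Normal$, so $a^*a \in \Zent(L,*)$, which combined with the standing assumption $\gamma^* = \gamma$ yields $(aj)^2 = \gamma\,a^*a \in \Zent(L,*) = \Zent(M) \cap L$.

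With $\dim(M)$ well-defined, I split into cases. If $*$ is nonidentity on $L$, then by \Cref{ZentMnot1} we have $\Zent(M) = \Zent(L,*) \subseteq L$, so
\[
|M/\Zent(M)| \;=\; \frac{2|L|}{|\Zent(L,*)|} \;=\; 2\cdot\frac{|L|}{|\Zent(L)|}\cdot\frac{|\Zent(L)|}{|\Zent(L,*)|},
\]
and therefore $\dim(M) = 1 + \dim(L) + \log_2|\Zent(L)/\Zent(L,*)| > \dim(L)$, strictly. If instead $*$ is the identity on $L$, then $L$ is commutative and $\Zent(L,*) = \Zent(L)$; by \Pref{ZentM1}, $\Zent(M) = \Zent(L) \cup \Zent(L)j$, so $|\Zent(M)| = 2|\Zent(L)|$ and $|M/\Zent(M)| = |L/\Zent(L)|$, giving $\dim(M) = \dim(L)$.

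Combining the two cases yields $\dim(M) \geq \dim(L)$ with equality if and only if $*$ is the identity on $L$. I expect no serious obstacle; the only nontrivial input is confirming $M \in \var{ZA} \cap \Exptwo$ to legitimize the notation $\dim(M)$, after which the argument is just counting with the explicit descriptions of $\Zent(M)$ in the two cases.
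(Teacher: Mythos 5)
Your argument is correct and rests on the same two inputs as the paper's proof, namely \Cref{ZentMnot1} for the nonidentity case and \Pref{ZentM1} for the identity case; the paper merely merges your two cases into a single chain $\dim(L/\Zent(L)) \leq \dim(L/\Zent(L,*)) = \dim(L\Zent(M)/\Zent(M)) \leq \dim(M/\Zent(M))$, with equality at both ends precisely when $*$ is the identity. One caveat: your counting with $|L|$, $|\Zent(L,*)|$ and $\log_2$ tacitly assumes $L$ is finite, whereas the statement does not, and the paper does apply this circle of ideas to infinite loops such as the $Q_n^{\circ}$ of \Eref{genQn}, whose center is $\mul{F}$. The repair is immediate and brings you back to the paper's phrasing: in the nonidentity case $\Zent(M)=\Zent(L,*)\sub L$, so $L/\Zent(L,*)$ sits inside $M/\Zent(M)$ as an index-two (hence codimension-one) subspace, giving $\dim(M)=1+\dim(L/\Zent(L,*)) \geq 1+\dim(L/\Zent(L)) > \dim(L)$ with no cardinality count. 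Your preliminary verification that $M\in\var{ZA}\cap\Exptwo$, so that $\dim(M)$ is defined, is a worthwhile addition that the paper leaves implicit, relying on the perfection of the variety $\var{ZA}_0\cap\Exptwo\cap\superCentral$ recorded in \eq{cool}.
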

\begin{proof}
As vector spaces, $\dim(L/\Zent(L)) \leq \dim(L/\Zent(L,*)) = \dim(L/ L \cap \Zent(M)) = \dim(L\Zent(M)/\Zent(M)) \leq \dim(M/\Zent(M))$ with equality in the first step, as well as in the final step, if and only if the involution on~$L$ is identity by \Cref{ZentMnot1} and \Pref{ZentM1}.
\end{proof}


\subsection{The generator $j$ induces Moufang}

Let $V$ be a vector space over~$\F_2$, and $j \in V$ a fixed nonzero element. The mapping $U \mapsto \sg{U,j}$ takes $k$-dimensional subspaces disjoint from $j$ to $(k+1)$-dimensional subspaces containing $j$. We say that subspaces $U,U'$, disjoint from $j$, are {\bf{$j$-partners}} if $U \neq U'$ and $\sg{U',j} = \sg{U,j}$.  Every pair of $3$-dimensional $j$-partners can be represented in the form $U = \sg{a,b,c}$ and $U' = \sg{a,b,cj}$ (writing $V$ multiplicatively). Every element of $\sg{U,j}$, except for~$j$, belongs to some $j$-partner of $U$.

We will apply this notion to a Cayley-Dickson double $M = \CD(L,*) = L \cup Lj$, by projecting subloops of $M$ to their images in $M/\Zent(M)$. Here we note that out of every partnership class, precisely one subloop is contained in~$L$; namely, $\sg{U,j} \cap L$.
\begin{lem}\label{Full1}
Let $L \in \var{ZA} \cap \Exptwo \cap \Central$. Let $M = \CD(L,*,\gamma)$ be its double. Let $O \leq L$ be a $3$-dimensional subloop. If $O$ and some $j$-partner of $O$ are Moufang, then $O$ is associative.
\end{lem}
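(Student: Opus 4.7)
The plan is to exploit the fact, recorded in Lemma~\ref{assocofM}, that in any octonion loop the associator of every basis (modulo the center) is a single central element of order at most~$2$. First I would invoke the standard form quoted just before the lemma: every pair of $3$-dimensional $j$-partners can be written as $O=\langle a,b,c\rangle$ with $a,b,c\in L$ and $O'=\langle a,b,cj\rangle$. Since $O'$ is Moufang and $3$-dimensional, $O'/\Zent(O')\cong\F_2^3$, so $O'$ lies in $\var{ZA}\cap\Exptwo$ and is therefore an octonion loop in the sense of the paper. Writing $\alpha=[a,b,c]$ and $\alpha'=[a,b,cj]$ for the common associator values of $O$ and $O'$ respectively (both of order dividing $2$), the target is to show $\alpha=1$.

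Next I would compute two associators of $O'$ using the table in Proposition~\ref{Icomputeassoc}, and equate them via the full symmetry of associators in octonion loops (Lemma~\ref{assocofM}). Since $*$ is central on $L$, one has $a^*=\mu(a)a$ with $\mu(a)\in\Zent(L)$, and since $L$ is central-by-abelian, multiplication of any entry of a commutator or associator by a central element leaves the value unchanged. Combined with Proposition~\ref{basicomm1} (so $[b,a]=[a,b]^{-1}$) and the symmetry $[b,a,c]=[a,b,c]=\alpha$ valid in the octonion loop $O$, the relevant rows of the table yield
$$[a,b,cj] \;=\; [a^*,b^*,c^*]\,[b^*,a^*] \;=\; \alpha\,[a,b]^{-1},$$
$$[a,cj,b] \;=\; [b,a^*,c^*]^{-1}\,[a^*,b,c^*]\,[b,a^*] \;=\; \alpha^{-1}\alpha\,[a,b]^{-1} \;=\; [a,b]^{-1}.$$

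Finally, because $\{a,b,cj\}$ and $\{a,cj,b\}$ are the same (unordered) basis of $O'/\Zent(O')$, Lemma~\ref{assocofM} forces both displayed associators to equal $\alpha'$. Equating them gives $\alpha\,[a,b]^{-1}=[a,b]^{-1}$, hence $\alpha=1$. Since every associator of basis elements of $O$ vanishes and $O$ is Moufang (hence diassociative), $O$ is associative. The main obstacle I anticipate is simply the bookkeeping with the central scalars $\mu(x)$ that $*$ introduces into the table entries; once one notes that these scalars are invisible to commutators and associators in a central-by-abelian loop, the cancellation is forced by $\alpha^2=1$.
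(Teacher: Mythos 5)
Your proof is correct and follows essentially the same route as the paper: the same standard form $O=\sg{a,b,c}$, $O'=\sg{a,b,cj}$, the same two rows of the table in Proposition~\ref{Icomputeassoc} (with the central scalars $\mu(x)$ absorbed), and Lemma~\ref{assocofM} applied to $O'$ to equate $[a,b,cj]$ with $[a,cj,b]$, yielding a trivial associator and then associativity of $O$. The only cosmetic differences are that you additionally apply Lemma~\ref{assocofM} to $O$ itself to replace $[b,a,c]$ by $\alpha$, whereas the paper reads off $[b,a,c]=[a,b,cj][a,cj,b]^{-1}=1$ directly, and your closing appeal to diassociativity should really be an appeal to Moufang's theorem ($[a,b,c]=1$ implies $\sg{a,b,c}$ is a group), which is exactly what the paper invokes.
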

\begin{proof}
Write $O = \sg{a,b,c}$ where the Moufang $j$-partner is $\sg{a,b,cj}$. By the identities given in \Pref{Icomputeassoc}, and the involution being central, we have that
$[a,b,cj] = 
[a,b,c][b,a]$
and $[a,cj,b] = 
[b,a,c]^{-1} [a,b,c] [b,a]$. If $\sg{a,b,cj}$ is Moufang then from \Lref{assocofM} we find that $[b,a,c] = [a,b,cj][a,cj,b]^{-1} = 1$, and we conclude by Moufang's theorem that $O$ is associative.
\end{proof}

\begin{lem}\label{Full2}
Let $L \in \var{ZA}_0 \cap \Exptwo \cap \superCentral$ and $M = \CD(L,*,\gamma)$.
Then $L$ is diassociative
if and only if $\sg{Q,j}$ is Moufang for every $2$-dimensional subloop $Q \leq L$.
\end{lem}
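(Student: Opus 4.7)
The core idea is to identify $\sg{Q,j}$ with a Cayley-Dickson double and apply \Tref{MoufCD}. Given a 2-dimensional subloop $Q \leq L$, $Q$ itself may fail to be closed under~$*$ (since $q^* = \mu(q)q$ with $\mu(q) \in \Zent(L)$ may lie outside~$Q$). I would therefore pass to $H := \sg{Q, \mu(Q), \gamma}$, which is closed under $*$ and still has $\dim H = 2$, because every added generator lies in $\Zent(L)$. The subloop identification proved in \Sref{sec:CD} then gives $\sg{Q,j} = \sg{H,j} \cong \CD(H,*,\gamma)$. Fixing $a,b \in Q$ whose images span $Q/\Zent(Q)$, one has $H = \sg{a,b}\cdot Z_0$ for a central subgroup $Z_0 \leq \Zent(L)$ containing $\mu(a), \mu(b), \gamma$ and $Q \cap \Zent(L)$.

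\textbf{Forward direction.} Suppose $L$ is diassociative. Then $\sg{a,b}$ is a group, and since $Z_0 \subseteq \Zent(L) \subseteq \Nuc{L}$, the extension $H = \sg{a,b}\cdot Z_0$ is again a group. The five conditions of \Tref{MoufCD} now hold for $(H,*,\gamma)$: conditions~(1), (4) and~(5) are trivial in any group, while (2) and (3) reduce to the observation that $cc^* = c(\mu(c)c) = \mu(c)c^2$ is central (since $\mu(c)$ is central by central-ness of~$*$ and $c^2 \in \Zent(L)$ by $\Exptwo$), so that $[a,cc^*]=1$ and $c^*c = cc^*$, giving $[c,c^*] = 1$. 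Hence $\sg{Q,j}$ is Moufang.

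\textbf{Reverse direction.} Assume $\sg{Q,j}$ is Moufang for every 2-dimensional $Q \leq L$, and take $a,b \in L$. If $\dim\sg{a,b} \leq 1$, then $\sg{a,b}$ sits inside a central extension of a cyclic subloop, and power associativity of $\var{ZA}\cap\Exptwo$ (established earlier in the paper) forces $\sg{a,b}$ itself to be a group. Otherwise $Q := \sg{a,b}$ is 2-dimensional; combining the hypothesis with the identification $\sg{Q,j}\cong \CD(H,*,\gamma)$ and clause~(1) of \Tref{MoufCD} forces $H$ to be Moufang. Since Moufang loops are diassociative, the 2-generated subloop $\sg{a,b}\leq H$ is a group. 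Either way, $L$ is diassociative.

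\textbf{Expected obstacle.} The only delicate point is the passage from $Q$ to its involution-closure $H$: one must verify that the added generators preserve dimension two (immediate, as they are all central in~$L$) and that $H$ is genuinely $*$-closed, so that the Cayley-Dickson identification is legitimate. Once this bookkeeping is in place, the proof splits cleanly into the triviality of all five conditions of \Tref{MoufCD} for a group with central involution in $\Exptwo$ (forward), and an appeal to the classical diassociativity of Moufang loops (reverse).
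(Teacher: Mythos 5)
Your proof is correct and follows essentially the same route as the paper: identify $\sg{Q,j}$ with a Cayley-Dickson double of a group carrying a normal involution and conclude it is Moufang (the paper cites \Cref{fin1}, whose content you re-verify by checking the five conditions of \Tref{MoufCD} by hand), and for the converse use that Moufang loops are diassociative. Your extra bookkeeping --- passing to the $*$-closure $H$ and treating the case $\dim\sg{a,b}\le 1$ separately --- is sound and in fact slightly more careful than the paper's own write-up.
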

\begin{proof}
$(\Rightarrow)$ Let $Q$ be a $2$-dimensional subloop of $L$. Since $L$ is diassociative, $Q$ is a group, so since~$\gamma$ is central, $\sg{Q,\gamma}$ is a group as well. By \eq{coolrem0succ} the involution on $L$ is normal, so $\sg{Q,j} = \CD(\sg{Q,\gamma},*,\gamma)$ is Moufang by \Cref{fin1}. 
$(\Leftarrow)$ Let $Q$ be a subloop of $L$ generated by two elements, then $\sg{Q,j}$ is Moufang and thus diassociative, showing that~$Q$ is associative.
\end{proof}

We say that an element $x$ is {\bf{locally Moufang}} in a loop $M \in \Exptwo$ if every $3$-dimensional subloop of $M$ containing $x$ is Moufang. Clearly, a necessary condition for the existence of a locally Moufang element is that $M$ is diassociative.
Since every $3$-dimensional subloop $O \leq \CD(L,*,\gamma)$ containing~$j$ has the form $O = \sg{O \cap L, j}$, we proved:
\begin{cor}\label{Full2+}
For $L \in \var{ZA}_0 \cap \Exptwo \cap \superCentral \cap \DIAS$, the element $j$ is locally Moufang in $M = \CD(L)$.
\end{cor}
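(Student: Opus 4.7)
The proof is essentially a two-line chase through the preceding lemma, as the corollary is phrased as the direct consequence of the displayed observation. My plan is therefore to make the chase explicit, after first verifying that the ``dimension'' language applies to $M$.

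First I would verify that $M = \CD(L,*,\gamma)$ lies in $\var{ZA} \cap \Exptwo$, so that $\dim(M) = \dim_{\F_2}(M/\Zent(M))$ is well defined in the sense used in \Sref{sec:oct}. Since $L$ is central-by-abelian, $[L,L,L] \sub \Zent(L,*)$, and $*$ is super-central on $L$, \Tref{main1} gives $M \in \var{ZA}_0$ (and $\gamma^* = \gamma$ is automatic in this setting). To see $M \in \Exptwo$, note that $a^2 \in \Zent(L) \sub \Zent(M)$ for $a \in L$, while $(aj)^2 = a^*a \in \Zent(L) \sub \Zent(M)$ by \eq{coolrem0succ}, because $L \in \Central \cap \Exptwo \sub \Normal$.

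Next, let $O \leq M$ be any $3$-dimensional subloop containing $j$. Since the class of $j$ is nonzero in $M/\Zent(M)$ and $L/\Zent(M)$ is a complementary hyperplane of $M/\Zent(M)$, the subspace $(O \cap L)\Zent(M)/\Zent(M)$ has dimension exactly $2$; thus $Q := O \cap L$ is a $2$-dimensional subloop of $L$, and $O = \sg{Q, j}$ as stated right above the corollary. Now invoke \Lref{Full2}: since $L$ is diassociative, $\sg{Q, j}$ is Moufang for every $2$-dimensional $Q \leq L$, so in particular $O$ is Moufang.

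Since $O$ was an arbitrary $3$-dimensional subloop containing $j$, the element $j$ is locally Moufang in $M$ by definition. The only real step requiring care is the dimension count identifying $Q = O \cap L$ as $2$-dimensional inside $L$; once this is in place, the conclusion is immediate from \Lref{Full2}. No genuine obstacle arises, since all nontrivial work has been done in establishing that $\var{ZA}_0 \cap \Exptwo \cap \superCentral \cap \DIAS$ is preserved under doubling and that \Lref{Full2} applies.
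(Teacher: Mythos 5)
Your proof is correct and follows essentially the paper's own route: the paper simply observes that every $3$-dimensional subloop of $M$ containing $j$ has the form $\sg{O\cap L,\,j}$ and then invokes \Lref{Full2}, exactly as you do, while your preliminary paragraph re-verifies that $M$ remains in $\var{ZA}_0\cap\Exptwo\cap\superCentral$, something the paper already has from the perfectness of this variety in \eq{cool}. One small imprecision worth fixing: the inclusion $\Zent(L)\sub\Zent(M)$ you invoke is false in general (by \Cref{ZentMnot1}, $\Zent(M)=\Zent(L,*)$ when $*$ is nonidentity, e.g.\ for $L=Q_1$), but the particular elements you need, $a^2$ and $a^*a$, are symmetric under your hypotheses --- the former by super-centrality, the latter always --- so they do lie in $\Zent(L,*)\sub\Zent(M)$ and the verification goes through.
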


\subsection{Two step doubling}

We now consider a chain $T \subset L \subset M$, where $L = \CD(T) = T \cup Tj$ and $M = \CD(L) = L \cup Lj'$.
Recall from \Pref{muinduces} that a central involution on a loop $T$ induces a homomorphism of the center. This homomorphism is trivial precisely when $\Zent(T) = \Zent(T,*)$.


We now prove a converse to \Cref{Full2+}.
\begin{lem}\label{Full3}
Assume $T \in \var{ZA}_0 \cap \Exptwo \cap \superCentral$ is a noncommutative loop such that $\Zent(T) = \Zent(T,*)$. 
Let $T \sub L \sub M$ as above. (In particular $M$ is not Moufang).

If $x \in M$ is locally Moufang, then $x \in \Zent(T) \cup \Zent(T) j' \cup (\KZent(T) -\Zent(T)) (j j')$,
where $\KZent(T) - \Zent(T)$ is the set difference.
\end{lem}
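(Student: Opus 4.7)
Because $\Zent(T) = \Zent(T,*)$, iterating \Cref{ZentMnot1} and \Pref{keepthat} gives $\Zent(L) = \Zent(M) = \Zent(T)$, so the $\F_2$-vector space $V = M/\Zent(M)$ splits as $\bar T \oplus \F_2 \bar j \oplus \F_2 \bar{j'}$ and every $x \in M$ lies in exactly one of the cosets $T$, $Tj$, $Tj'$, $Tjj'$. The proof proceeds by case analysis on this decomposition, using two tools: the explicit associator formulas of \Pref{Icomputeassoc} (applied once for $L = \CD(T)$ and once for $M = \CD(L)$), and \Lref{assocofM}, which forces the associator of three generators of any $3$-dimensional Moufang subloop to be a single central element $\alpha$ of order at most $2$, fully symmetric in its arguments, with every commutator of generators in $\sg{\alpha}$.

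\textbf{Cases $x \in T$ and $x \in Tj$.} For $x \in T \setminus \Zent(T)$ the key $3$-dimensional subloops are $\sg{x, j, j'}$, $\sg{x, yj, j'}$ for $y$ with $[x,y] \ne 1$, and (inside $L$) $\sg{x, j, y}$. From \Pref{Icomputeassoc} all associators of generators in each of these are expressible in terms of $\mu(x), \mu(y), [x,y]$, while the commutators also involve the extension parameter~$\epsilon$. Forcing $\epsilon, \mu(x), \mu(y) \in \sg{\alpha}$ across varying~$y$ via \Lref{assocofM} produces an incompatible system whenever $x \notin \KZent(T)$; the remaining subcase $x \in \KZent(T) \setminus \Zent(T)$ (hence $x \notin \Nuc{T}$) is excluded with subloops of the form $\sg{x, y, zj'}$, where the identity $[x, zj', y] = [y, x, z]^{-1}[x, y, z]$ (read directly from \Pref{Icomputeassoc}) collapses under full symmetry to $[y, x, z] = 1$, contradicting the symmetry of Moufang associators. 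The case $x = x_0 j$ reduces via $(x_0 j) \cdot j = \gamma x_0$ to $\sg{x_0, j, j'}$, and the residual sub-case $x_0 \in \Zent(T)$ is excluded by the commutator pattern in $\sg{x_0 j, y, y' j'}$ for suitable $y, y' \in T$.

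\textbf{Cases $x \in Tj'$ and $x \in Tjj'$.} The case $x \in Tj'$ is symmetric to $x \in Tj$ and handled in parallel. For $x = x_0 jj'$, writing $x_0 jj' = (x_0 j)j' \in Lj'$ and applying rows~3 and~5 of \Pref{Icomputeassoc} to $\sg{x_0 jj', y, z}$ with $y, z \in T$, direct computation gives $[x_0 jj', y, z] = [z, y, x_0]^{-1}$ and $[y, x_0 jj', z] = [z, y, x_0]^{-1}[y, z, x_0][y, z]$. Full Moufang symmetry collapses these to $[y, z, x_0][y, z] = 1$, i.e.\ $[y, z, x_0] = [y, z]$, required for every $y, z \in T$ with $\bar y, \bar z$ independent and distinct from $\bar x_0$. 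If $x_0 \in \Zent(T)$ then $[y, z, x_0] = 1$, forcing every such $[y, z] = 1$ and contradicting the noncommutativity of~$T$; if $x_0 \notin \KZent(T)$ a parallel calculation with a $y$ satisfying $[x_0, y] \ne 1$ produces a further incompatibility. Hence $x_0 \in \KZent(T) \setminus \Zent(T)$.

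\textbf{Main obstacle.} The $Tjj'$ case is the delicate one, because the exclusion of $\Zent(T)jj'$ (equivalently, the fact that $jj'$ itself is not locally Moufang) is the counterintuitive content of the lemma. The argument hinges on the witness $\sg{jj', y, z}$ with $[y,z] \ne 1$, for which \Pref{Icomputeassoc} yields $[jj', y, z] = 1$ while $[y, jj', z] = [y,z] \ne 1$; this asymmetry violates the full symmetry required by \Lref{assocofM}. Getting the signs right through the two layers of Cayley--Dickson doubling, and tracking how the central but non-identity involution on~$L$ propagates into these formulas, is the principal bookkeeping challenge. The earlier cases are routine variants of this template once the bookkeeping is set up.
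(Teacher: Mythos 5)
Your computation for the $Tjj'$ case is essentially the paper's: you correctly derive $[x_0jj',y,z]=[z,y,x_0]^{-1}$ and $[y,x_0jj',z]=[z,y,x_0]^{-1}[y,z,x_0][y,z]$, equate them via \Lref{assocofM}, and the witness $\sg{jj',y,z}$ with $[y,z]\neq 1$ is exactly what excludes $\Zent(T)jj'$. But the rest of the proposal has genuine gaps. For $x\in L=T\cup Tj$ you never produce an actual argument: the exclusion of $x\in T\setminus\KZent(T)$ rests on an unspecified ``incompatible system'' (and on a control of commutators that \Lref{assocofM} does not assert), the reduction of $x=x_0j$ to $\sg{x_0,j,j'}$ is not a reduction (local Moufangness of $x_0j$ constrains only the $3$-dimensional subloops containing $x_0j$, not those containing $x_0$), and the step ``collapses to $[y,x,z]=1$, contradicting the symmetry of Moufang associators'' is backwards --- a trivial associator is perfectly compatible with \Lref{assocofM}; the contradiction must instead come from $x\notin\Nuc{T}$ after you have forced $\sg{x,y,z}$ to be associative for all $y,z$. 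The paper disposes of the whole case $x\in L$ in one stroke: for any $y,z\in L$ with $\sg{x,y,z}$ of dimension $3$, both $\sg{x,y,z}$ and its $j'$-partner $\sg{x,y,zj'}$ contain $x$, hence are Moufang, so \Lref{Full1} makes $\sg{x,y,z}$ associative; therefore $x\in\Nuc{L}$, and $\Nuc{L}=\Zent(T)$ by \Pref{nucMall} since $T$ is noncommutative. Note that this single computation also delivers the commutant condition (membership in $\KZent(T)$ is built into the nucleus of the double), which your $T$-internal case split would have to establish separately.

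Second, the claim that $Tj'$ is ``symmetric to $Tj$ and handled in parallel'' cannot be right, because the conclusions differ: $\Zent(T)j'$ is permitted by the lemma (and $j'$ genuinely is locally Moufang in the admissible setting, \Cref{Full2+}), whereas no element of $Tj$ is. The asymmetry is structural: for $x\in L$ the $j'$-partner trick upgrades ``Moufang'' to ``associative'' and lands $x$ in the full nucleus of $L$, while for $x=x'j'$ the relevant partner $\sg{x',y,z}$ of $\sg{x,y,z}$ is not known to be Moufang, so all one can extract from equating $[z,y,x'j']$ with $[z,x'j',y]$ is $[y,z,x']=1$, i.e.\ $x'\in\Nuc[r]{L}$ --- which for $x'\in T$ gives $x'\in\Zent(T)$, and for $x'=aj$ gives $a\in\KZent(T)$ with $a\notin\Zent(T)$ ruled out by the explicit right-nucleus condition of \Pref{nucMall}. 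These two computations have to be run separately rather than by appeal to symmetry.
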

\begin{proof}
First assume $x \in L$. Let $y,z \in L$ be such that $\sg{x,y,z}$ is $3$-dimensional. Then $\sg{x,y,z}$ and $\sg{x,y,zj'}$ are Moufang by assumption, so $\sg{x,y,z}$ is associative by \Lref{Full1}.  This proves that $x \in \Nuc{L}$, so $x \in \Zent(T)$ by \Pref{nucMall}.

Now assume $x \in Lj'$, so write $x = x'j'$ where $x' \in L$. Take $y, z \in L$, such that $\sg{x', y, z} $ is of dimension $3$. Then $[z, y, x'j'] = [z, y, x'][y, z]$ and $[z, x'j', y] = [y, z, x']^{-1}[z, y, x'][y, z]$.
Since $\sg{y,z,x'j'} = \sg{x,y,z}$ is $3$-dimensional Moufang, these associators must be equal. Hence $[y, z, x'] = 1$. This proves $x' \in \Nuc[r]{L}$.

Now, if $x' \in T$, then $x' \in \Zent(T)$ by \Pref{nucM}. So assume
$x' \in Tj$, so write $x' = aj$. 
By \Pref{nucM}.(1), this already proves that $a \in \KZent(T)$, and it remains to show that $a \not \in \Zent(T)$, but the condition $(ab)c = a(cb)$ in \Pref{nucM}.(1) does not hold for $a \in \Zent(T)$ if we choose noncommuting $b,c \in T$.
\qedhere
\end{proof}

With a stronger assumption on $\KZent(T)$, we get a more concise statement:
\begin{cor}\label{Full3.0}
Assume $T \in \var{ZA}_0 \cap \Exptwo \cap \superCentral$ is noncommutative, and $\KZent(T) = \Zent(T,*)$.
Let $T \sub L \sub M$ be as above.

Then any locally Moufang element $x \in M$ is in $\Zent(T)\sg{j'}$.
\end{cor}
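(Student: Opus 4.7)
The plan is to observe that the stronger hypothesis $\KZent(T) = \Zent(T,*)$ collapses one of the three sets appearing in the conclusion of Lemma~\ref{Full3}, so that the corollary reduces immediately to that lemma. No new loop-theoretic computation will be required; the argument is purely set-theoretic once the inclusion chain among $\Zent(T,*)$, $\Zent(T)$, and $\KZent(T)$ is exploited.

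First I would record the general chain
\[
\Zent(T,*) \;\subseteq\; \Zent(T) \;\subseteq\; \KZent(T),
\]
where the first inclusion holds by the definition of the symmetric center, and the second by definition of $\Zent(T) = \KZent(T)\cap\Nuc{T}$. Invoking the hypothesis $\KZent(T)=\Zent(T,*)$, the two ends coincide, and hence all three sets are equal; in particular $\Zent(T) = \Zent(T,*)$, which is exactly the hypothesis required to apply Lemma~\ref{Full3}, and $\KZent(T)-\Zent(T) = \emptyset$.

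Next I would apply Lemma~\ref{Full3} to the locally Moufang element $x \in M$, obtaining
\[
x \;\in\; \Zent(T)\,\cup\,\Zent(T)\,j'\,\cup\,(\KZent(T)-\Zent(T))(jj').
\]
By the previous paragraph the last component is empty, so $x \in \Zent(T) \cup \Zent(T)j' \subseteq \Zent(T)\sg{j'}$, which is the stated conclusion.

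I do not anticipate any real obstacle: once the inclusion chain is written down, the corollary is a one-line consequence of Lemma~\ref{Full3}. The only point worth emphasizing in the write-up is that the assumption $\KZent(T) = \Zent(T,*)$ simultaneously secures the hypothesis $\Zent(T) = \Zent(T,*)$ needed to invoke the lemma, \emph{and} eliminates the ``extra'' coset $(\KZent(T)-\Zent(T))(jj')$ in its conclusion.
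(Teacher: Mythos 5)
Your proposal is correct and is exactly the argument the paper intends (it leaves the corollary without proof as an immediate consequence of Lemma~\ref{Full3}): the hypothesis $\KZent(T)=\Zent(T,*)$ forces $\Zent(T,*)=\Zent(T)=\KZent(T)$, which both supplies the hypothesis $\Zent(T)=\Zent(T,*)$ of the lemma and empties the coset $(\KZent(T)-\Zent(T))(jj')$. Nothing further is needed.
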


\medskip

We say that a loop~$T$ with involution is {\bf{admissible}} if $$T \in \var{ZA} \cap \Exptwo \cap \Central \cap \DIAS,$$ and $\KZent(T) = \Zent(T,*)$. It follows that $T \in \var{ZA}_0 \cap \superCentral$, so $T$ belongs to the perfect variety of \Cref{BigD}. By \Pref{thecenter}, if the involution on $T$ is nonidentity, then the property $\KZent(T) = \Zent(T,*)$ is also preserved in the doubling process.

\begin{cor}\label{sofar}
Let $T$ be an admissible loop which is not an abelian group. Then any automorphism of $M = \CD^2(T) = \sg{T,j,j'}$ stabilizes $j'$ modulo the center.
\end{cor}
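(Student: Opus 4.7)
The plan is to identify $j'$ intrinsically by its role as a locally Moufang element, and then to transport this property along any automorphism of $M$.

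First, I would verify that $j'$ is locally Moufang in $M$. Since $T$ is admissible, $T$ lies in the perfect variety $\var{ZA}_0 \cap \Exptwo \cap \superCentral \cap \DIAS$ of \Cref{BigD}; hence $L = \CD(T,*,\gamma)$ lies in this variety as well, and \Cref{Full2+} applied to the further double $M = \CD(L)$ gives that $j'$ is locally Moufang in $M$. Because every $\sigma \in \Aut(M)$ sends $3$-dimensional subloops containing $j'$ bijectively to $3$-dimensional subloops containing $\sigma(j')$, preserving the Moufang identities, the image $\sigma(j')$ is likewise locally Moufang.

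Second, I would apply \Cref{Full3.0} to the chain $T \subset L \subset M$. The admissibility hypothesis gives the required condition $\KZent(T) = \Zent(T,*)$, while the assumption that $T$ is not an abelian group forces $T$ to be noncommutative: a commutative admissible $T$ would satisfy $\KZent(T) = T = \Zent(T,*) \subset \Nuc{T}$, making $T$ an abelian group. Hence $\sigma(j') \in \Zent(T)\sg{j'}$.

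Third, I would sharpen this to $\sigma(j') \in \Zent(M)\,j'$. Since $(j')^2 = \gamma' \in \Zent(L,*)$, the cyclic subloop satisfies $\sg{j'} \subset \Zent(L,*) \cup \Zent(L,*)\,j'$. Admissibility yields $\Zent(T) \subset \KZent(T) = \Zent(T,*)$, and using \eqref{theZent} for the first doubling one has $\Zent(T,*) \subset \Zent(L,*)$. Applying \eqref{theZent} once more to the second doubling gives $\Zent(L,*) = \Zent(M) \cap L \subset \Zent(M)$. Combining, $\Zent(T)\sg{j'} \subset \Zent(M) \cup \Zent(M)\,j'$. Finally, noncommutativity of $T$ forces the involution on $T$, and hence on $L$, to be nonidentity, so by \Cref{ZentMnot1} we have $\Zent(M) \subset L$; thus $j' \notin \Zent(M)$, and because $\sigma$ permutes $\Zent(M)$, also $\sigma(j') \notin \Zent(M)$. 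This rules out the central alternative and leaves $\sigma(j') \in \Zent(M)\,j'$, as required. The principal obstacle is this last refinement step, which requires careful tracking of three layers of centers — of $T$, $L$, and $M$ — and their symmetric subparts, repeatedly invoking \eqref{theZent} and \Cref{ZentMnot1} across the two doubling steps.
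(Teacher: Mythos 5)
Your proposal is correct and follows essentially the same route as the paper: establish that $j'$ is locally Moufang via \Cref{BigD} and \Cref{Full2+}, transport this property along any automorphism, and then invoke \Lref{Full3} (in the form of \Cref{Full3.0}, whose hypotheses you correctly verify, including deducing noncommutativity of $T$ from admissibility plus non-abelianness) to pin $\s(j')$ down to $\Zent(T)\sg{j'}$ and finally to $\Zent(M)j'$. The paper's own proof is terser but uses exactly these two lemmas; your third step merely makes explicit the bookkeeping of centers that the paper leaves implicit.
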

\begin{proof}
Let $L = \CD(T,*)$. By \Cref{BigD}, $L \in \var{ZA}_0 \cap \Exptwo \cap \superCentral$, so we can apply \Cref{Full2+} to~$L$, and obtain that every $3$-dimensional subloop of~$M$ containing~$j'$ is Moufang. On the other hand, by \Lref{Full3}, $j'$ is the only element (up to center) satisfying this property, so any automorphism must preserve~$j'$ up to the center.
\end{proof}

\begin{exmpl}
\Cref{sofar} applies to the quaternion group $T = Q_2$ (with the symplectic involution induced from the quaternion algebra), proving that any automorphism of $Q_4 = \sg{Q_3,j'}$ stabilizes $j'$ modulo the center $\set{\pm 1}$. But it does not apply to the abelian group $Q_1$ (although $Q_1 \in \var{ZA}_0 \cap \Exptwo \cap \superCentral \cap \DIAS$), and indeed the automorphism group of $Q_3$ acts transitively on $Q_3/\Zent(Q_3) \isom \Z_2^3$.
\end{exmpl}

\section{The automorphism group}\label{sec:12}

We can finally prove the main result of this part of the paper. Recall that anti-commutative loops were defined in \Ssref{ASAC}. Following the terminology for groups, we say that $A \leq L$ is a {\bf{characteristic subloop}} if any automorphism of $L$ preserves $A$.

\begin{thm}\label{mainM}
Let $T$ be an admissible anti-commutative 
loop, which is not an abelian group.
Then $L = \CD(T) = \sg{T,j}$ is a characteristic subloop of its double $M = \CD(L) = \sg{T,j,j'}$.
\end{thm}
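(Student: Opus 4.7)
The plan is as follows. Let $\sigma \in \Aut(M)$ be arbitrary. First, by \Cref{sofar}, $\sigma(j') = z j'$ for some $z \in \Zent(M)$. Under our hypotheses $T$ must be noncommutative (a commutative $T$ in $\Exptwo$ that is anti-commutative would, under admissibility, force $T$ to be an abelian group, contrary to assumption), so $L = \CD(T)$ is noncommutative by \Pref{dercomm}, and by \Cref{ZentMnot1} we have $\Zent(M) = \Zent(L,*) \sub L$. Hence $\sigma(j') \in Lj'$. The goal $\sigma(L) = L$ is equivalent to $\sigma$ preserving the $\Z_2$-grading $M = L \cup Lj'$; equivalently, the map $L \to \Z_2$ sending $b$ to $0$ if $\sigma(b) \in L$ and to $1$ if $\sigma(b) \in Lj'$ (a homomorphism, from the grading) must be identically zero. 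Let $S$ denote the kernel of this map; then $S$ is a normal subloop of $L$ of index $1$ or $2$, and it suffices to show $S = L$.

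To prove $S = L$, I would apply $\sigma$ to the defining relations of the Cayley-Dickson structure of $M$. Applying $\sigma$ to $j' b = b^* j'$ for each $b \in L$ and expanding via \eqs{Mul1}{Mul3} yields: if $b \in S$, then $\sigma(b^*) = \sigma(b)^*$, so $\sigma$ respects the involution on $S$; while if $b \notin S$, so $\sigma(b) = dj' \in Lj'$, one obtains $\mu_L(d) = \sigma(\mu_L(b))$, where $\mu_L$ is the function on $L$ defined by $b^* = \mu_L(b) b$ (super-central by the admissibility framework, cf. \Pref{keepthat}). A parallel analysis applied to $(aj')(bj') = \gamma_2 b^* a$ for $a, b \in L \setminus S$ shows that $\mu_L$ is constant on $L \setminus S$; hence $L \setminus S$ lies in a single coset of the subloop $\ker \mu_L$ of symmetric elements of $L$.

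The main obstacle is ruling out the case $S \neq L$. In that case $S$ has index exactly $2$ in $L$, so $|L \setminus S| = |L|/2 \leq |\ker \mu_L|$, forcing $\ker \mu_L$ to have index $1$ or $2$ in $L$. Index $1$ would make $L$ commutative, contradicting noncommutativity. In the remaining case (index $2$), the anti-commutativity of $T$ enters essentially: selecting independent $a_1, a_2 \in L \setminus S$ (which exist since $\dim L \geq 2$ by noncommutativity of $L$), the commutator $[a_1, a_2] \in \Zent(L)$ is nontrivial by the anti-commutativity of $L$ inherited from $T$ via \Pref{AC'}, together with the anti-symmetry propagating from admissibility plus anti-commutativity via \Pref{AS'}. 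Expanding $\sigma([a_1, a_2])$ in two ways --- directly via $\sigma$'s action on $\Zent(M)$, and via the images $\sigma(a_i) = c_i j'$ using the formula $(c_1 j')(c_2 j') = \gamma_2 c_2^* c_1$ --- produces the required contradiction. Hence $S = L$, and $\sigma(L) = L$.
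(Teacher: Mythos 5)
Your opening reduction --- passing to the parity homomorphism $L \to M/L \isom \Z_2$ and its kernel $S = L \cap \sigma^{-1}(L)$ of index $1$ or $2$ --- is valid, and it genuinely differs from the paper's argument; both proofs share only the entry point \Cref{sofar}. But your argument breaks at its crucial step. You need $L = \CD(T)$ to be anti-commutative in order to conclude $[a_1,a_2] \neq 1$ for independent $a_1,a_2 \in L \setminus S$, and this is not available under the hypotheses of \Tref{mainM}. By \Pref{AC'}, anti-commutativity of $L$ requires $T$ to be anti-\emph{symmetric}, which is not assumed; and \Pref{AS'}, which you cite to supply it, runs in the wrong direction: it characterizes when a \emph{double} is anti-symmetric (the base loop must already be anti-symmetric and $\epsilon \neq 1$), so it cannot produce anti-symmetry of $T$ from admissibility and anti-commutativity. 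Indeed anti-symmetry genuinely fails in cases covered by the theorem: take $T = Q_2$ with the central involution $x^* = ix^{-1}i^{-1}$, so that $i^* = -i$ but $j^* = j$ and $k^* = k$. This $T$ is admissible, anti-commutative and not an abelian group, yet $j$ is a noncentral symmetric element; consequently in $L = \CD(T,*,\gamma,\epsilon) = T \cup Tu$ the elements $j$ and $u$ are independent modulo $\Zent(L) = \set{\pm 1}$ and commute (an element $a \in T$ commutes with $u$ exactly when $a^* = a$), so $L$ is not anti-commutative and your nontrivial commutator need not exist. This is precisely why \Tref{FINAL} adds anti-symmetry of $T$ and $\epsilon_i \neq 1$ as separate hypotheses; \Tref{mainM} has neither. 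The paper's proof avoids the issue entirely by using only the anti-commutativity of $T$ itself: it traps a $2$-dimensional abelian subloop inside $T$, via \Cref{Full2+}, $j'$-partners and \Lref{Full1}.

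Second, even granting a nontrivial $[a_1,a_2]$, your endgame is asserted rather than proved. Carrying out the two-way expansion with $\sigma(a_i) = c_i j'$ and $c^* = \mu(c)c$ gives
$$\sigma([a_1,a_2]) \;=\; \mu(c_1)^{-1}\mu(c_2)\,[c_2,c_1],$$
which is a constraint on the unknowns $c_1,c_2$, not a contradiction: nothing forces the right-hand side to be trivial while $[a_1,a_2]$ is not (combining it with the relations you extracted from $j'b = b^*j'$ yields only $\mu(c_1) = \mu(c_2)$, which is still consistent). Two smaller defects in the middle of the argument: the symmetric elements of a noncommutative loop do not form a subloop --- if $a^* = a$ and $b^* = b$ then $(ab)^* = ba \neq ab$ in general --- so ``a single coset of the subloop $\ker \mu_L$'' is not well formed; and the count $\card{L \setminus S} = \card{L}/2 \leq \card{\ker \mu_L}$ presumes $L$ finite, which is not assumed (the loops of \Eref{genQn} over infinite fields are infinite). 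The coset-containment form of that step could be repaired, but the two gaps above cannot: as written, the proposal does not prove the theorem, and some substitute for the paper's Moufang/partner machinery is needed to rule out $S \neq L$.
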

\begin{proof}
First note that by \Cref{ZentMnot1}, $\Zent(M) = \Zent(L) = \Zent(T,*)$, which we denote by~$Z$. Since $\dim(T/Z) \geq 2$, we have that $\dim(M/Z) \geq 4$.
Let $\phi$ be an automorphism of $M$. Let $s \in L$ and assume, by contradiction, that $\phi(s) \not \in L$. Write $\phi(s) = cj'$ for $c \in L$.

Let $P$ be a subloop of $M$ containing $P_0 = \sg{\phi^{-1}(j), j, s, j'}Z$ such that $\dim(P/Z) = 4$. Let $P' = P \cap L$; then $P/P' \isom PL/L = M/L$ so $\dim(P'/Z) = 3$, and $P'$ is Moufang by \Cref{Full2+} since $j \in P'$. Therefore $\phi(P') \supseteq \phi(P_0) \cap L = \sg{j, \phi(j), c,j'}Z  \cap \phi(L)$ is Moufang.

We now use the fact that $\phi(j') \equiv j' \pmod{Z}$ by \Cref{sofar}.
Since $\sg{\phi(P'), j'} = \phi(\sg{P',j'}) = \phi(P) = \sg{\phi(P) \cap L, j'}$, $\phi(P')$ is a $j'$-partner of $\phi(P) \cap L$, so the latter is associative by \Lref{Full1}.
Since $j \in \phi(P) \cap L$ by construction, and $\phi(P) \cap L = \sg{\phi(P) \cap T, j}$, $\phi(P) \cap T$ must be an abelian group by \Eref{der1}. But this contradicts the assumption that $T$ is anti-commutative since $\dim(\phi(P) \cap T) = 2$.
\end{proof}

Recall the notation from \Ssref{ss:Aut0}. Together with \Cref{sofar}, \Tref{mainM} proves for $M$ as in the theorem that
\begin{equation}\label{goal}
\Aut(M) = \Aut(M;\,L,Zj')
\end{equation}
and thus
\begin{equation}\label{goal*}
\Aut(M,*) = \Aut(M,*;\,L,Zj').
\end{equation}
These groups were shown in \Pref{Aut0} to be subgroups of the semidirect product $\Aut(L,*) \semidirect \Zent(L)$.

The following chain of loops motivated this paper.
\begin{exmpl}\label{Qn}
Let $Q_n$ denote the loop of the standard basis elements of the classical Cayley-Dickson algebra $A_n$ over a field of characteristic not~$2$ ($\dim A_n = 2^n$). Namely $Q_n = \CD(Q_{n-1},*,-1,-1)$, where $Q_0 = \set{\pm 1}$ with the trivial involution. Thus $Q_1 = \sg{i} \isom \Z_4$ with the inverse involution, and~$Q_2$ is the quaternion group, again with the inverse involution.
\end{exmpl}

\begin{rem}\label{Kirsh}
Lemma 36 in \cite{Kirsh} claims that if $\sg{x,y,z} \leq Q_{n-1}$ is an octonion subloop (where $n \geq 4$), then $\sg{xj,yj,zj}$ is a nonassociative octonion subloop as well. However, in this loop $[xz,yz,zj] = 1$, which can be confirmed directly or by \cite[Lemma 34(a)]{Kirsh} (equivalently by our \Pref{Icomputeassoc});
however, the associators of bases of a nonassociative octonion loop are all equal to $-1$. The claim is thus incorrect.
\end{rem}

The loops in \Exref{Qn} generate the standard Cayley-Dickson algebras. Let us now consider the general case.
\begin{exmpl}\label{genQn}
Let $F$ be a field of characteristic not $2$. Let $\set{\gamma_0,\gamma_1,\dots} \sub \mul{F}$ be scalars.

Denote by $Q_0^{\circ}$ the multiplicative group $\mul{F}$ with the trivial involution. For $i = 1,\dots$, let $$Q_i^{\circ} = \CD(Q_{i-1}^{\circ},*,\gamma_{i-1},-1) = \sg{Q_{i-1}^{\circ},j_{i-1}}.$$
The common center is $\mul{F}$, and $Q_i^{\circ}/\mul{F} \isom \Z_2^{i}$. (When all $\gamma_i = -1$, $Q_i^{\circ}$ is the central product $\mul{F}Q_i$, namely the Cartesian product $\mul{F}\times Q_i$ modulo identifying $-1$ in both groups).

To emphasize the role of the scalars, let us denote $Q_n^\circ$ by $(\gamma_0,\dots,\gamma_{n-1})^\ell$.

Identifying the center with the multiplicative group of the base field, the loop algebra $F(\gamma_0,\dots,\gamma_{n-1})^\ell$ is the Cayley-Dickson algebra $(\gamma_0,\dots,\gamma_{n-1})_F$, of dimension $2^n$. Thus $F(\gamma_0)^\ell = F[\sqrt{\gamma_0}]$ is a (possibly split) quadratic extension, $F(\gamma_0,\gamma_{1})^\ell$ is the quaternion algebra $(\gamma_0,\gamma_1)_F$, and $F(\gamma_0,\gamma_1,\gamma_2)^\ell$ is the octonion algebra $(\gamma_0,\gamma_1,\gamma_2)_F$.
\end{exmpl}

\begin{exmpl}
Continuing \Exref{genQn}, if $n\geq 2$ then $(\gamma_0,\dots,\gamma_{n-1})^\ell$ is a loop satisfying all the conditions of \Tref{mainM}. Therefore, every automorphism of $(\gamma_0,\dots,\gamma_{n-1},\gamma_n,\gamma_{n+1})^\ell$ fixes the newest generator $j_{n+1}$ modulo the center. In particular the automorphism group $\Aut((\gamma_0,\dots,\gamma_{n+1})^\ell)$ is a subgroup of $\Aut((\gamma_0,\dots,\gamma_{n})^\ell,*) \semidirect \mul{F}$.
\end{exmpl}

Note that for an anti-symmetric admissible loop, the involution cannot be the identity unless $L$ is an abelian group.
\begin{thm}\label{FINAL}
Let $T$ be an admissible loop which is anti-symmetric and anti-commutative, but not an abelian group. Let $Z = \Zent(T)$.
Let $n \geq 2$, and let
$$\gamma_0,\dots,\gamma_{n-1}, \epsilon_0, \dots,\epsilon_{n-1} \in Z$$ be central elements with $\epsilon_i \neq 1$ and $\epsilon_i^2 = 1$ ($i=1,\dots,n-1$). Let
$$M = \CD^n(L,*; \gamma_0,\ldots,\gamma_{n-1}; \epsilon_0,\dots,\epsilon_{n-1})$$
be the Cayley-Dickson doubling, repeated $n$ times.

Then $\Aut(M)$ is a subgroup of the semidirect product $$\Aut(T_1,*) \semidirect Z^{n-1},$$
where $T_1 = \CD(T,*,\gamma_0,\epsilon_0)$, composed of the vectors $(\s;p_1,\dots,p_{n-1})$ for which $\s(\gamma_i)\gamma_i^{-1} = p_i^2$ for $i \leq n-1$ and $\s(\epsilon_i) = \epsilon_i$ for $i \leq n-2$.
\end{thm}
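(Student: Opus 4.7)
I would prove this by induction on $n$, descending through the chain of doublings
\begin{equation*}
T = T_0 \subset T_1 \subset \cdots \subset T_n = M, \qquad T_k = \CD(T_{k-1},*,\gamma_{k-1},\epsilon_{k-1}).
\end{equation*}
The base case $n=2$ is essentially what \Tref{mainM}, \Cref{sofar}, and \Pref{Aut0} already package: every $\sigma \in \Aut(T_2)$ preserves $T_1$ and sends $j_1$ to $Zj_1$, so \Pref{Aut0} yields the stated semidirect description.

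For the inductive step I would first verify that the full hypothesis of \Tref{mainM} is inherited by every intermediate loop $T_k$ (for $k \leq n-2$). Admissibility is preserved because \Cref{BigD} shows $\var{ZA}_0 \cap \Exptwo \cap \superCentral \cap \DIAS$ is perfect, and the additional requirement $\KZent(T_{k+1}) = \Zent(T_{k+1},*)$ follows by combining \Pref{thecenter} with \Cref{ZentMnot1} together with the observation that admissibility forces $\Zent(T_k) = \Zent(T_k,*) = Z$ at every level. Anti-symmetry and anti-commutativity pass through by \Pref{AS'} and \Pref{AC'} (using $\epsilon_k \neq 1$ and the fact that the involution on $T_k$ is nonidentity, as $T_k$ is anti-commutative but not an abelian group). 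Being not-an-abelian-group is trivially inherited by the ascending chain. With this in place, \Tref{mainM} applied with $T_{k-2}$ in the base role shows that $T_{k-1}$ is characteristic in $T_k$ for each $k \geq 2$, and \Cref{sofar} then gives $\sigma(j_{k-1}) \in Zj_{k-1}$ for every $\sigma \in \Aut(T_k)$.

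The final step is to iterate \Pref{Aut0}. Starting from $\sigma \in \Aut(T_n)$, the previous paragraph yields $\sigma \in \Aut(T_n;\,T_{n-1},Zj_{n-1})$, so part~(1) of \Pref{Aut0} produces $(\sigma_{n-1},p_{n-1}) \in \Aut(T_{n-1},*) \semidirect Z$ with $\sigma_{n-1}(\gamma_{n-1})\gamma_{n-1}^{-1} = p_{n-1}^* p_{n-1} = p_{n-1}^2$ (the last equality because every element of $Z = \Zent(T,*)$ is symmetric). Now $\sigma_{n-1}$ preserves~$*$, and at each subsequent descent $T_k \to T_{k-1}$ (for $k = n-1, \ldots, 2$) the same characteristic/stabilization argument places $\sigma_k$ in $\Aut(T_k,*;\,T_{k-1},Zj_{k-1})$, so part~(2) of \Pref{Aut0} applies and yields $(\sigma_{k-1},p_{k-1}) \in \Aut(T_{k-1},*,\epsilon_{k-1}) \semidirect Z$ subject to $\sigma_{k-1}(\gamma_{k-1})\gamma_{k-1}^{-1} = p_{k-1}^2$. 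Since all the $\sigma_k$ agree when restricted to~$Z$, accumulating these data gives the required tuple $(\sigma_1;p_1,\ldots,p_{n-1})$, with $\sigma(\gamma_i)\gamma_i^{-1}=p_i^2$ for $1 \leq i \leq n-1$ and $\sigma(\epsilon_i)=\epsilon_i$ for $1 \leq i \leq n-2$ (the constraint on $\epsilon_{n-1}$ is absent precisely because the first descent used part~(1) rather than part~(2) of \Pref{Aut0}). The main obstacle is the bookkeeping of this descent: tracking which $\gamma_i$ and $\epsilon_i$ are constrained at which level, and verifying that each restriction is simultaneously an automorphism of the appropriate $T_k$ respecting both $*$ and the relevant parameters.
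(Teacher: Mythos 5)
Your proposal is correct and follows essentially the same route as the paper's proof: build the chain $T_0\subset T_1\subset\cdots\subset T_n$, check that admissibility, anti-symmetry and anti-commutativity propagate (via \Cref{BigD}, \Pref{AS'}, \Pref{AC'} and \Cref{ZentMnot1}), apply \Tref{mainM} and \Cref{sofar} with $T_{k-2}$ in the base role to get preservation of the flag and of each $Zj$-coset, and then read off the explicit conditions from \Pref{Aut0}. The only difference is cosmetic (framing the descent as an induction on $n$), and your accounting of why $\epsilon_{n-1}$ is unconstrained matches the paper's use of part~(1) versus part~(2) of \Pref{Aut0}.
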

\begin{proof}
Denote $T_0 = T$ and for $i = 1,\dots,n$, let
$$T_i = \CD(T_{i-1},*,\gamma_{i-1},\epsilon_{i-1}) = \sg{T_{i-1},j_i},$$
when $j_i$ is the standard generator of the doubling at this stage, with $j_i^2 = \gamma_i$.
Thus $M = T_n$. We claim that every~$T_i$ is admissible, anti-symmetric and anti-commutative. Since $T_0$ is not an abelian group, the involution on $T_0$ is nonidentity, so this holds for any $T_i$.
Being in $\var{ZA}_0 \cap \Exptwo \cap \superCentral \cap \DIAS$ passes to the double by \Cref{BigD}. Anti-symmetry and anti-commutativity pass to the double when the involution on the given loop is nonidentity and $\epsilon \neq 1$, by \Pref{AS'} and \Pref{AC'}. And finally,
it follows from \Cref{ZentMnot1} that every central element is symmetric.

Now, for $i = 2,\dots,n$, we can take $T$ in~\Tref{mainM} to be $T_{i-2}$, which gives $\Aut(T_{i}) = \Aut(T_i;\, T_{i-1}, Z j_i)$. Combining these claims, we find that every automorphism of $M = T_n$ preserves the flag $T_1 \sub T_2 \sub \cdots \sub T_n$, and is determined by the restriction of the automorphism to $T_1$ and by the image of each $j_i$ in $Z j_i$. The explicit conditions on $(\s;p_1,\dots,p_n)$ are given in~\Pref{Aut0}, which in particular shows that $\s \in \Aut(T_i)$ fixes~$\epsilon_{i-1}$.
\end{proof}

Recall the loops $Q_n$ from \Exref{Qn}. Then $T = Q_2$ is the quaternion group, with the inverse involution, which is admissible. The joint center is $Z = \set{\pm 1}$, and the action of $\Aut(Q_3)\isom \GL[3](\F_2)$ on the center is trivial. \Tref{FINAL} applies, and we obtain:
\begin{cor}
For any $n \geq 3$,
$$\Aut(Q_n) = \Aut(Q_3,*) \times \set{\pm 1}^{n-3}.$$
\end{cor}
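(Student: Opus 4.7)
The plan is to apply Theorem~\ref{FINAL} with $T = Q_2$, the quaternion group under its inverse involution, to handle the case $n \geq 4$, and to treat $n = 3$ separately as a trivial base case.

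First I would verify that $Q_2$ satisfies every hypothesis of Theorem~\ref{FINAL}. As a group with center $\set{\pm 1}$ and quotient $\Z_2 \times \Z_2$, $Q_2$ lies in $\var{ZA} \cap \Exptwo \cap \DIAS$; its inverse involution is normal, hence central by~\eq{coolrem0succ}. The commutant $\KZent(Q_2) = \set{\pm 1}$ coincides with $\Zent(Q_2,*)$ because $-1$ is symmetric, so $Q_2$ is admissible. Every noncentral element has order~$4$ and is therefore not self-inverse (anti-symmetry), and any two noncentral elements $a, b$ whose product is also noncentral lie in distinct cosets of $\set{\pm 1}$ and hence anticommute (anti-commutativity). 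Clearly $Q_2$ is not an abelian group.

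Next, with $\gamma_i = \epsilon_i = -1$ throughout, the iterated doubling of $T_0 = Q_2$ after $n - 2$ steps reproduces $Q_n$. For $n \geq 4$ this number of doublings is at least~$2$, and Theorem~\ref{FINAL} yields an embedding $\Aut(Q_n) \hookrightarrow \Aut(Q_3,*) \semidirect \set{\pm 1}^{n-3}$ cut out by $\s(\gamma_i)\gamma_i^{-1} = p_i^2$ and $\s(\epsilon_i) = \epsilon_i$. Since $-1$ is the unique element of order~$2$ in the common center $\set{\pm 1}$, it is fixed by every automorphism, so both conditions become automatic: the second trivially, and the first because $p_i^2 = 1$ for every $p_i \in \set{\pm 1}$. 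By Proposition~\ref{Aut0}, every such tuple in turn lifts, one doubling at a time, to an automorphism of $Q_n$, so the embedding is an equality; and because $\Aut(Q_3,*)$ acts trivially on $\set{\pm 1}$, the semidirect product collapses to a direct product.

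The base case $n = 3$ is immediate: any automorphism $\phi$ of $Q_3$ fixes $\pm 1$ and sends each noncentral $x$ to a noncentral element, on which the involution acts as negation, so $\phi(x^*) = \phi(-x) = -\phi(x) = \phi(x)^*$; hence $\Aut(Q_3) = \Aut(Q_3,*)$, matching the formula $\Aut(Q_3,*) \times \set{\pm 1}^0$. The only genuine source of friction in the argument is bookkeeping of indices: Theorem~\ref{FINAL} counts doublings starting from $T_0$, and the exponent $n - 3$ in the corollary arises from the $(n-2) - 1$ doublings performed beyond $T_1 = Q_3$, which has already been absorbed into $\Aut(Q_3,*)$.
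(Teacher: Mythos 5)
Your proposal is correct and follows essentially the same route as the paper: apply Theorem~\ref{FINAL} with $T = Q_2$ (the quaternion group with the inverse involution, which is admissible, anti-symmetric and anti-commutative), observe that the center $\set{\pm 1}$ is acted on trivially so the conditions on $(\s; p_1,\dots,p_{n-1})$ are vacuous and the semidirect product collapses to a direct product. Your extra verifications of the hypotheses and the trivial $n=3$ case are consistent with what the paper leaves implicit.
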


In the context of the Cayley-Dickson algebras, or their basis loops, it makes sense to consider automorphisms fixing the base field~$F$.
Working with the loops $(\gamma_0,\dots,\gamma_{n-1})^\ell$ of \Exref{genQn}, the same theorem proves the following:
\begin{cor}\label{final}
For any $n \geq 3$, and any choice of the defining scalars $\gamma_0,\dots,\gamma_{n-1} \in \mul{F}$,
$$\Aut_F((\gamma_0,\dots,\gamma_{n-1})^\ell) = \Aut_F((\gamma_0,\gamma_1,\gamma_{2})^\ell,*) \times \set{\pm 1}^{n-3}.$$
\end{cor}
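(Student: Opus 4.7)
The plan is to apply \Tref{FINAL} with the quaternion loop $T = (\gamma_0, \gamma_1)^\ell$ as the base, and separately handle the boundary case $n = 3$ by direct inspection of the involution.

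I would first verify the hypotheses of \Tref{FINAL} for $T$. As a central extension of $\Z_2^2$ by $\mul F$ that is a genuine group, $T$ is associative and hence diassociative; every square lies in $\mul F$, so $T \in \Exptwo$; the involution $x \mapsto -x$ on noncentral elements is central and, since $\mychar F \neq 2$, super-central; and both $\KZent(T)$ and $\Zent(T,*)$ equal $\mul F$, so $T$ is admissible. Anti-symmetry follows from $\mychar F \neq 2$, anti-commutativity from $ij = -ji$, and $T$ is nonabelian. Setting $m = n - 2$, the iterated doubling with scalars $\gamma_2, \ldots, \gamma_{n-1}$ and $\epsilon_i = -1$ realizes $T_k = (\gamma_0, \ldots, \gamma_{k+1})^\ell$, so $T_1 = (\gamma_0, \gamma_1, \gamma_2)^\ell$ is the octonion loop and $T_m$ is the target loop; the constraint ``$n \geq 2$'' of \Tref{FINAL} corresponds to $n \geq 4$ in our setup.

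For such $n \geq 4$, \Tref{FINAL} identifies $\Aut(T_m)$ with the subgroup of $\Aut(T_1, *) \semidirect Z^{m-1}$ (where $Z = \mul F$) cut out by the conditions $\s(\gamma_i)\gamma_i^{-1} = p_i^2$ and $\s(\epsilon_i) = \epsilon_i$. Restricting to $F$-automorphisms is where the payoff comes: since $\s \in \Aut_F(T_1, *)$ fixes $\mul F$ pointwise, the second condition becomes automatic and the first collapses to $p_i^2 = 1$, forcing $p_i \in \set{\pm 1}$; moreover the semidirect action of $\Aut_F(T_1, *)$ on $\mul F$ is trivial, so the semidirect product becomes a direct product. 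This produces exactly the claimed equality for $n \geq 4$.

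The case $n = 3$ reduces to $\Aut_F((\gamma_0,\gamma_1,\gamma_2)^\ell) = \Aut_F((\gamma_0,\gamma_1,\gamma_2)^\ell, *)$, which I would establish directly: the super-central involution on the octonion loop is described by $\mu(x) = 1$ for $x \in \mul F$ and $\mu(x) = -1$ for every noncentral $x$, and any $F$-automorphism preserves this central/noncentral partition, hence commutes with $*$. The only conceptual point in the whole argument is recognizing that the $F$-linearity constraint trivializes the semidirect product coming out of \Tref{FINAL}; after that, it is routine bookkeeping.
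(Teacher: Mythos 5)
Your proposal is correct and takes essentially the same route as the paper: apply \Tref{FINAL} with the quaternion loop $(\gamma_0,\gamma_1)^\ell$ as the admissible base, identify $T_1$ with the octonion loop $(\gamma_0,\gamma_1,\gamma_2)^\ell$, and observe that $F$-linearity forces $p_i^2=1$ and trivializes the action on the center, turning the semidirect product into $\Aut_F(T_1,*)\times\set{\pm 1}^{n-3}$. Your explicit verification of admissibility and your separate treatment of $n=3$ (via $\mu\equiv -1$ off the center) only spell out what the paper leaves implicit.
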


Again considering only isomorphisms fixing the base field $F$, the same arguments prove a loop counterpart to Schafer's theorem on Cayley-Dickson algebras \cite{Sch0}:
\begin{thm}
The loops $(\gamma_0,\dots,\gamma_{n-1})^{\ell}$ and $(\gamma'_0,\dots,\gamma'_{n-1})^{\ell}$ are isomorphic as extensions of $\mul{F}$ if and only if $(\gamma_0,\gamma_1,\gamma_{2})^{\ell} \isom (\gamma'_0,\gamma_1',\gamma'_{2})^{\ell}$ and $\gamma_i' \in \mul{F}^2\gamma_i$ for $i = 4,\dots,n$.
\end{thm}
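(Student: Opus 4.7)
The plan is to proceed by induction on $n$, peeling off one Cayley-Dickson doubling at a time via Theorem \ref{mainM} and tracking the effect on the parameter $\gamma_i$ via Proposition \ref{Aut0}. Set $T_i = (\gamma_0,\dots,\gamma_{i-1})^{\ell}$ and $T_i' = (\gamma_0',\dots,\gamma_{i-1}')^{\ell}$, so that $T_i$ is generated over $T_{i-1}$ by an element $j_{i-1}$ with $j_{i-1}^2 = \gamma_{i-1}$ and extension parameter $\epsilon_{i-1} = -1$. The common center of $T_i$ (for $i \geq 2$) is $\mul{F}$, and since the relation $j_{i-1}a = a^* j_{i-1}$ in $T_i$ encodes the involution, any loop isomorphism of towers that fixes $\mul{F}$ automatically commutes with the involution. (The index range ``$i = 4,\dots,n$'' in the statement appears to be a typographical shift of the intended range $i = 3,\dots,n-1$; I argue for the latter.)

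For the forward direction, let $\phi \co T_n \ra T_n'$ be an $F$-isomorphism. A direct check shows that $T_2$, the scalar-extended quaternion loop, is admissible, anti-symmetric, anti-commutative, and not an abelian group; by Corollary \ref{BigD} together with Propositions \ref{AS'} and \ref{AC'}, and the fact (noted in the paper after the definition of admissibility) that $\KZent(T)=\Zent(T,*)$ is preserved in the doubling process once the involution is nonidentity, these hypotheses pass to every $T_i$ for $i\geq 2$. Hence for each $i$ in the range $4 \leq i \leq n$, Theorem \ref{mainM} applied to $T_{i-2}$ shows that $T_{i-1}$ is characteristic in $T_i$ (and similarly $T_{i-1}'$ in $T_i'$). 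Iterating produces a restriction of $\phi$ to an $F$-isomorphism $\phi_3 \co T_3 \ra T_3'$ of octonion loops. At each step $i \in \{3,\dots,n-1\}$, the obvious isomorphism version of Proposition \ref{Aut0} shows that $\phi|_{T_{i+1}}$ is determined by $\phi_i := \phi|_{T_i}$ together with $\phi(j_i) = p_i j_i'$ for some central $p_i \in \mul{F}$, subject to the squaring condition $\phi_i(\gamma_i)(\gamma_i')^{-1} = p_i^* p_i = p_i^2$. Since $\phi$ fixes the base field, $\phi_i(\gamma_i) = \gamma_i$, so $\gamma_i' \in \mul{F}^2 \gamma_i$.

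For the reverse direction, start from an $F$-isomorphism $\phi_3 \co T_3 \ra T_3'$ and scalars $p_i \in \mul{F}$ witnessing $\gamma_i^{-1}\gamma_i' = p_i^{-2}$ for $i = 3,\dots,n-1$. Build $\phi_{i+1} \co T_{i+1} \ra T_{i+1}'$ inductively from $\phi_i$ by declaring $\phi_{i+1}(j_i) = p_i j_i'$. The compatibility conditions that were necessary above are now sufficient: the squaring relation $\phi_i(\gamma_i) = p_i^2\gamma_i'$ holds by construction, and invariance of $\epsilon_i$ reduces to $\phi_i$ fixing $-1 \in \mul{F}$, which is automatic. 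Iterating yields the desired $\phi_n \co T_n \ra T_n'$.

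The essential content of the argument is already packaged into Theorem \ref{mainM}: once each $T_{i-1}$ is known to be a characteristic subloop of $T_i$ for $i\geq 4$, every $F$-isomorphism $T_n \ra T_n'$ must respect the entire flag $T_3 \subset \cdots \subset T_n$, and the remainder is bookkeeping with central scalars. The only technical point I would need to spell out separately is the isomorphism version of Proposition \ref{Aut0}, which is entirely routine: one replaces the automorphism $\sigma$ of $L$ by an isomorphism $\sigma \co L \ra L'$ throughout the proof, with no essential change in the compatibility conditions.
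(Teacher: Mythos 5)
Your proposal is correct and is essentially the paper's own argument: the paper gives no separate proof of this theorem, saying only that ``the same arguments'' as in \Tref{FINAL} apply, and your adaptation (isomorphism version of \Pref{Aut0}, iterated application of \Tref{mainM} down the flag $T_3\subset\cdots\subset T_n$, and the repair of the index range to $i=3,\dots,n-1$) is precisely that. The one point to flag is that you also need the isomorphism versions of \Tref{mainM} and \Cref{sofar}, not only of \Pref{Aut0} --- knowing that $T_{i-1}$ is characteristic in $T_i$ and $T_{i-1}'$ is characteristic in $T_i'$ does not by itself force $\phi(T_{i-1})=T_{i-1}'$ --- but this is harmless because those proofs characterize $T_{i-1}$ and $Zj_{i-1}$ inside $T_i$ by isomorphism-invariant properties (locally Moufang elements and $j$-partners), so they transfer to isomorphisms verbatim; likewise your justification that $\phi_3$ respects the involution should rest on the intrinsic description $x^*=-x$ off the center (all $\epsilon_i=-1$) rather than on the relation $j_{i-1}a=a^*j_{i-1}$, which presupposes $\phi_3(j_2)\in\mul{F}j_2'$.
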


\Cref{final} is not always enlightening, because for example when the~$\gamma_i$ are independent in the $\Z_2$-module $\mul{F}/\mul{F}^2$, a-priori $\Aut_F(Q_n^{\circ}) = \set{\pm 1}^n$ because an automorphism~$\s$ must fix every $j_i^2 = \gamma_i$ as a scalar, forcing $\s(j_i) = \pm j_i$. However, the theorem applies also to the generic loop. Let $k$ be any field of characteristic not $2$. Let $F = k(\gamma_0,\dots,\gamma_{n-1})$ be the function field in $n$ variables over~$k$. Construct the loops~$Q_n^{\circ}$ of \Exref{genQn}.
Then $$\Aut_k(Q_n^{\circ}) = \Aut_k(Q_3^{\circ},*) \times (\mul{F})^{n-3},$$
emphasizing the fact that although $\Aut_k(Q_n^{\circ})$ acts transitively on $\set{\gamma_0,\gamma_1,\gamma_2}$, the $\gamma_i$'s with $i \geq 3$ are fixed modulo squares.

\end{document}